\documentclass[10pt]{article}
\usepackage{fullpage}
\usepackage{amsmath}
\usepackage{amsthm}
\usepackage{amssymb}
\usepackage{wasysym}
\usepackage{pifont}
\usepackage{graphicx}
\usepackage{psfrag}
\usepackage{datetime}
\usepackage{bbm}
\usepackage{tikz}
\usepackage{tikz-cd}
\usepackage{titling}
\usepackage{subfigure}
\usepackage[utf8]{inputenc} 
\usetikzlibrary{positioning,arrows}
\tikzset{
  state/.style={circle,draw,minimum size=6ex},
  arrow/.style={-latex, shorten >=1ex, shorten <=1ex}}

\usepackage{geometry}
\usepackage[all]{xy}
\usepackage{graphicx}
\usepackage{titling}
\usepackage{fancyhdr}
\usepackage{array}
\usepackage{capt-of}
\usepackage[mathlines]{lineno}

\DeclareMathOperator{\Conf}{Conf}
\DeclareMathOperator{\Auto}{Aut}

\geometry{a4paper} 
\usepackage{hyperref}

\begin{document}
	\title{The boundary algebra of a GL$_m$-dimer}
	\author{
		Lukas Andritsch\thanks{Mathematics and Scientific Computing, University of Graz, Graz, Austria, \newline  {\tt lukas.andritsch@uni-graz.at} }}

	\newtheorem{lm}{Lemma}[section]
	\newtheorem{prop}[lm]{Proposition}
	\newtheorem*{claim}{Claim}
	\newtheorem{notation}[lm]{Notation}
	\newtheorem*{corollary}{Corollary}
	\newtheorem{cor}[lm]{Corollary}
	\newtheorem{theorem}[lm]{Theorem}
	\newtheorem*{thm}{Theorem}
	
	\theoremstyle{definition}
	\newtheorem{defn}[lm]{Definition}
	\newtheorem{def_con}[lm]{Definition and construction}
	\newtheorem*{defini}{Definition}
	\newtheorem*{definitionen}{Definitionen}
	
	\newtheorem*{rem}{Remark}
	\newtheorem{remark}[lm]{Remark}
	
	\maketitle
	
	
	\begin{abstract}
		We consider GL$_m$-dimers of triangulations of regular convex $n$-gons, which give rise to a dimer model with boundary $Q$ and a dimer algebra $\Lambda_Q$. Let $e_b$ be the sum of the idempotents of all the boundary vertices, and $\mathcal{B}_Q:= e_b \Lambda_Q e_b$ the associated boundary algebra. In this article we show that given two different triangulations $T_1$ and $T_2$ of the $n$-gon, the boundary algebras are isomorphic, i.e. $e_b \Lambda_{Q_{T_1}} e_b \cong e_b \Lambda_{Q_{T_2}} e_b$.
	\end{abstract}
	
	\textit{Keywords:} boundary algebra, dimer model, quiver, triangulation 
	
	\textit{2010 MSC:} 16G20, 82B20, 57Q15
	
	\tableofcontents


	\section{Introduction} 
	Dimer models with boundary were introduced by Baur, King and Marsh in \cite{bmt}. In case without boundary the definition is similar to dimer models defined by Bocklandt \cite{bl}. Dimer models with boundary are quivers with faces satisfying certain axioms. To any dimer model $Q$ one can associate its dimer algebra $A_Q$ as the path algebra of $Q$ modulo the relations arising from an associated potential.\\
	A source for dimer models are Postnikov diagrams of type $(k,n)$ in the disk, introduced by Postnikov in \cite{p} and used in \cite{bmt} as a combinatorial approach to Grassmannian cluster categories. In general, dimer algebras arising from different $(k,n)$ diagrams are not isomorphic. However, if you consider their boundary algebra, which is the idempotent subalgebra $B_Q:= e A_Q e$, where $e:= e_1 + \ldots + e_t$ is the sum of all idempotents corresponding to the boundary vertices, then one of the main results of \cite{bmt} is that for any two $(k,n)$-diagrams the associated boundary algebras are isomorphic.
	\ \\
	
	In this article, we study another source for dimer models, the so-called GL$_m$-dimers. They arise from Goncharov's $A_{m-1}^{*}$-webs on disks defined in \cite{g}. 
	In the case when $S$ is a disk with $n$ special points on the boundary, $A_{m}^{*}$-webs describe a cluster coordinate systems on the moduli space $\Conf_n(\mathcal{A}_{SL_{m+1}}^{*})$ as shown in \cite{g}. This moduli space is defined as $n$-tuples of the moduli space $GL_{m+1}/U$ of all decorated flags in an $m+1$-dimensional vector space $V_{m+1}$, where $U$ is the upper triangular unipotent subgroup in $GL_{m+1}$, modulo the diagonal action of the group $SL_m=\Auto(V_m,\Omega_m)$, where $\Omega_m$ is a volume form in $V_m$.\newline 
	\ \\
	The purpose of this article is to show that on the disk the boundary algebra of any two different GL$_m$-dimers are all isomorphic. \\
	The paper is structured as follows. After giving the necessary background in Section 2, we will first prove this result for the boundary algebra of a GL$_2$-dimer in Section $3$ and address the general case in Section $4$. \\
	For GL$_2$-dimers, the main result is the following: the quiver of the boundary algebra of any GL$_2$-dimer on an $n$-gon is given by $\Gamma(n)$, the following quiver shown in Figure \ref{fig:general_boundary_algebra}.
	\vspace{-2 cm}		\vspace{-0.3 cm}
	\begin{center}\includegraphics[width=220pt]{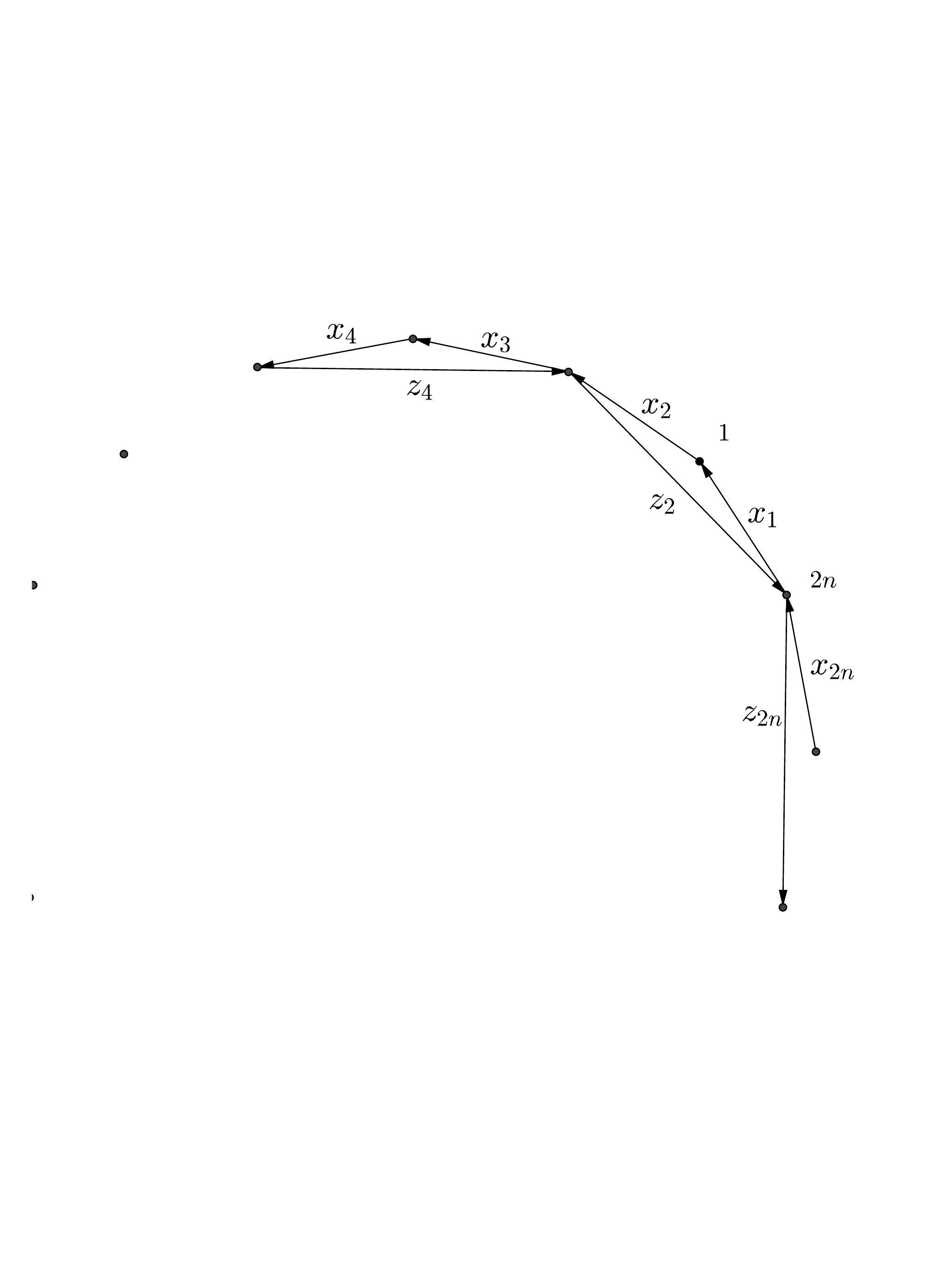}
		\vspace{-3.1 cm}
		\captionof{figure}{Part of the quiver $\Gamma(n)$. \label{fig:general_boundary_algebra}} 
	\end{center}
	
	Any two $3$-cycles incident with a common vertex are equivalent. Furthermore, any composition $z_{2k} z_{2k-2}$ is equivalent to the corresponding composition of $2n-4$ arrows $x_{2k+1} \cdots x_{2k-2}$, reducing modulo $2n$ and considering the composition of paths from left to right.\\
	The strategy to prove this result is to first prove it for boundary algebras arising from fan triangulations and then to show flip invariance.\\
	Throughout this paper, when we consider indices modulo $k$, we always assume them to be between $1$ and $k$. In particular, $0$ is never used as an index.


	\section{Settings and the GL$_m$-dimer}\label{sec:gen_set}
	
	
	\subsection{Background}
	\begin{defn}[quiver with faces]
		A quiver with faces is a quiver $Q=(Q_0,Q_1)$ together with a set $Q_2$ of faces and a map 
		\begin{align*}
		\partial: Q_2 \rightarrow Q_{cyc},
		\end{align*} 
		which assigns to each $F \in Q_{2}$ its boundary $\partial F \in Q_{cyc}$, where $Q_{cyc}$ is the set of oriented cycles in $Q$ (up to cyclic equivalence).
	\end{defn}
	We will always denote a quiver with faces by $Q$, regarded now as the tuple $(Q_0,Q_1,Q_2,s,t)$. A quiver with faces is called  finite if $Q_0$,$Q_1$ and $Q_2$ are finite sets. The (unoriented) \emph{incidence graph} of $Q$, at a vertex $i \in Q_0$, has vertices given by the arrows incident with $i$. The edges between two arrows $\alpha$,$\beta$ correspond to the paths of the form 
	\begin{figure}[!htb]
		\begin{center}
			\begin{tikzpicture}[scale=1]
			\node(a) at (0,-1){};
			\node(i) at (1.5,-1){};
			\node(b) at (3,-1){};
			
			\node(i1)[label=above: $i$] at (1.5,-1.125){};
			\node(a1)[label=above: $\alpha$] at (0.75,-1.125){};
			\node(b1)[label=above: $\beta$] at (2.25,-1.2){};	
			\path[line width=0.25mm,->] (a) edge (i);
			\path[line width=0.25mm,->] (i) edge (b);
			
			\draw[fill] (i) circle (1pt);
			\end{tikzpicture}
			\vspace{-0.6 cm}	
		\end{center}
		
	\end{figure}\\
	occurring in a cycle bounding a face.  
	\begin{defn}[dimer model with boundary \cite{bmt}]\label{def:dimer_model}
		A (finite, oriented) dimer model with boundary is given by a finite quiver with faces $Q=(Q_0,Q_1,Q_2)$ where $Q_2$ is written as disjoint union $Q_2=Q_2^{+} \cup Q_{2}^{-}$, satisfying the following properties:
		\begin{itemize}
			\item[(a)] the quiver $Q$ has no loops, i.e. no $1$-cycles, but $2$-cycles are allowed,
			\item[(b)] all arrows in $Q_1$ have face multiplicity $1$ (boundary arrows) or $2$ (internal arrows),
			\item[(c)] each internal arrow lies in a cycle bounding a face in $Q_2^{+}$ and in a cycle bounding a face in $Q_{2}^{-}$,
			\item[(d)] the incidence graph of $Q$ at each vertex is connected.
		\end{itemize}
	\end{defn}
	Note that, by (b), each incidence graph in (d) must be either a line (at a boundary vertex)
	or an unoriented cycle (at an internal vertex).
	
	
	\subsection{The dimer algebra and the boundary algebra}\label{subsec:dimer_algebra_and_boundary_algebra} 
	
	\begin{defn}[natural potential $W$]
		Let $Q=(Q_0,Q_1,Q_2)$ be a dimer model with boundary. Then the following formula
		\begin{align*}
		W:=W_Q:= \sum_{\gamma \in Q_{2}^{+}}{\partial \gamma}-\sum_{\gamma  \in Q_{2}^{-}}{\partial \gamma}
		\end{align*} 
		defines the natural potential associated to $Q$.
	\end{defn}
	\begin{rem}[differentiation of $W$] \
		\begin{figure}[!htb]
			\begin{center}
				\includegraphics[scale=0.65]{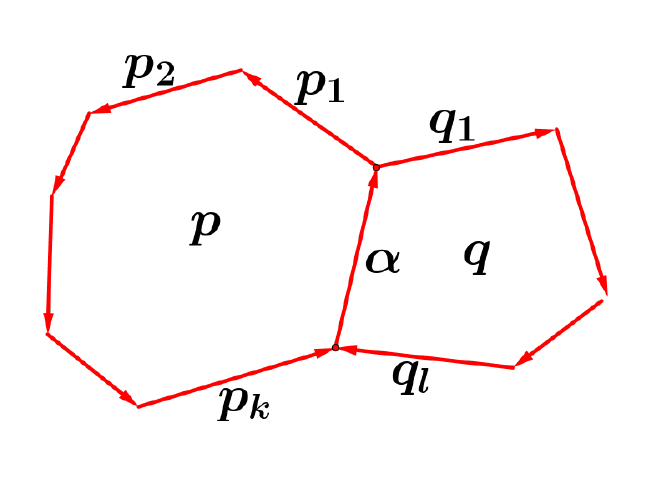}
				\vspace{-0.5 cm}
				\caption{$\alpha$ is part of a positive cycle $p$ and a negative cycle $q$.\label{fig:natural_potential}}
			\end{center}
		\end{figure} 
		Let $\partial W$ be the set of all cyclic derivatives with respect to all internal arrows $\alpha$ in $Q$. That means, if $\alpha$ is both part of the negative (clockwise) cycle $q=\alpha q_1\ldots q_l$ and the positive (counterclockwise) cycle $p=\alpha p_1 \ldots p_k$ with $k,l \geq 1$ as shown in Figure \ref{fig:natural_potential}, then the equation
		\begin{align*}
		\frac{\partial W}{\partial \alpha}: p_1 p_2 \ldots p_k=q_1 q_2 \ldots q_l 
		\end{align*}
		holds. In this article we use the notation $p_1 p_2 \ldots p_k \overset{\alpha}{\cong} q_1 q_2 \ldots q_l$ for relations obtained by the natural potential $W$.
	\end{rem}
	\begin{defn}[dimer algebra]
		Let $Q=(Q_0,Q_1,Q_2)$ be a dimer model with boundary and let $W$ and $\partial W$ be defined as above. Then the dimer algebra $\Lambda_Q$ is defined as
		\begin{align*}
		\Lambda_Q :=\mathbb{C}Q/_{\langle \partial W \rangle} .
		\end{align*}
	\end{defn}
	
	As usual, we write $e$ to denote an idempotent of an algebra and in the path algebra $\mathbb{C}Q$, let $e_i$ be the trivial path of length zero at vertex $i$. It is an idempotent of $\mathbb{C}Q$.
	Define
	$$e_b := e_1 + \ldots + e_t$$
	where $1,\ldots,t$ are the boundary vertices of the quiver; i.e. the vertices that are incident with boundary arrows. Furthermore, we call the remaining vertices of the quiver \textit{internal} (or \textit{inner}) \textit{vertices} and all arrows, that are incident with at least one internal vertex are called \textit{internal} (or \textit{inner}) \textit{arrows}.
	
	\begin{defn}[boundary algebra]
		The boundary algebra of a dimer model $Q$ with boundary is the spherical subalgebra consisting of linear combinations of paths which have starting and terminating points on the boundary of the quiver (i.e. one of the idempotent elements $e_1,\ldots e_t$):
		\begin{align*}
		\mathcal{B}_Q:= e_b \Lambda_Q e_b.
		\end{align*}
	\end{defn}
	
	
	\subsection{The GL$_m$-dimer} \
	
	\begin{defn}[triangulation]
		A triangulation of a regular convex polygon is a subdivision of the $n$-gon by diagonals into triangles, where each pair of diagonals  intersects at most in one of the vertices of the polygon.
	\end{defn} 
	\begin{rem}
		Every triangulation of an $n$-gon uses $n-3$ diagonals.
	\end{rem}
	\begin{rem}
		A special case of triangulation is the so called \textit{fan triangulation}, where each diagonal of the triangulation contains a given fixed vertex of the polygon.
	\end{rem}
	
	We recall Goncharov's definition of bipartite graphs $\Gamma_{A_{m-1}^{*}}(T)$ of an $m$-triangulation of a decorated surface $S$ as in \cite{g}. We use these graphs\footnote{up to a changed condition at the boundary} to define a family of dimer models with boundary.
	\begin{def_con}[GL$_m$-dimer]
		
		Take an arbitrary triangulation of the polygon. Every triangle is subdivided with $(m-1)$-lines in equidistance parallel to each of its sides, as in Figure \ref{fig:GL_m-dimer} (left) for $m=4$.
		Each triangle of the triangulation is now subdivided into small triangles of two kinds, namely \textit{upwards} and \textit{downwards} triangles w.r.t. an arbitrary edge. The subdivided triangle in the case of $m=4$ consists of $10$ upwards and $6$ downwards triangles for example.
		
		\begin{figure}[!htb]
			\begin{center}
				\begin{tikzpicture}[scale=1]
				\node(a) at (-1,-1){};
				\node(aab) at (-0.25,-1){};
				\node(aab) at (1.25,-1){};
				\node(b) at (2,-1){};
				\node(c) at (5,-1){};
				\node(d) at (0.5,1.12){};
				\node(e) at (2,3.24){};
				\node(f) at (3.5,1.12){};
				\node(ab) at (0.5, -1){};
				\node(bc) at (3.5, -1){};
				\node(ad) at (-0.25,0.06){};
				\node(de) at (1.25,2.18){};
				\node(ef) at (2.75,2.18){};
				\node(cf) at (4.25,0.06){};
				\draw[dotted] (a) to (c);
				\draw[dotted] (ab) to (ad);
				\draw[dotted] (bc) to (de);
				\draw[dotted] (ad) to (cf);
				\draw[dotted] (de) to (ef);
				\draw[dotted] (ab) to (ef);
				\draw[dotted] (bc) to (cf);
				\draw[dotted] (a) to (e);
				\draw[dotted] (e) to (c);
				\draw[dotted] (d) to (b);
				\draw[dotted] (d) to (f);
				\draw[dotted] (f) to (b);
				\node(k) at (7,-1){};
				\node(kkl) at (7.75,-1){};
				\node(kl) at (8.5, -1){};
				\node(kll) at (9.25,-1){};
				\node(l) at (10,-1){};
				\node(llm) at (10.75,-1){};
				\node(lm) at (11.5, -1){};
				\node(lmm) at (12.25,-1){};
				\node(m) at (13,-1){};
				\node(n) at (8.5,1.12){};
				\node(nno) at (8.875,1.65){};
				\node(no) at (9.25,2.18){};
				\node(noo) at (9.625,2.71){};
				\node(o) at (10,3.24){};
				\node(oop) at (10.375,2.71){};
				\node(op) at (10.75,2.18){};
				\node(opp) at (11.125,1.65){};
				\node(p) at (11.5,1.12){};
				\node(kkn) at (7.375,-0.47){};
				\node(kn) at (7.75,0.06){};
				\node(knn) at (8.125,0.59){};
				\node(mmp) at (12.625,-0.47){};
				\node(mp) at (12.25,0.06){};
				\node(mpp) at (11.875,0.59){};
				\node(np) at (10,1.12){};
				\node(h1) at (9.25,0.06){};
				\node(h2) at (10.75,0.06){};
				\node(u1) at (10,2.53){};
				\node(u2) at (9.25,1.473){};
				\node(u3) at (10.75,1.473){};
				\node(u4) at (8.5,0.413){};
				\node(u5) at (10,0.413){};
				\node(u6) at (11.5,0.413){};
				\node(u7) at (7.75,-0.65){};
				\node(u8) at (9.25,-0.65){};
				\node(u9) at (10.75,-0.65){};
				\node(u0) at (12.25,-0.65){};
				\node(d1) at (10,1.827){};
				\node(d2) at (9.25,0.77){};
				\node(d3) at (10.75,0.77){};
				\node(d4) at (8.5,-0.29){};
				\node(d5) at (10,-0.29){};
				\node(d6) at (11.5,-0.29){};
				\draw[fill=white] (u1) circle (3pt);
				\draw[fill=white] (u2) circle (3pt);
				\draw[fill=white] (u3) circle (3pt);
				\draw[fill=white] (u4) circle (3pt);
				\draw[fill=white] (u5) circle (3pt);
				\draw[fill=white] (u6) circle (3pt);
				\draw[fill=white] (u7) circle (3pt);
				\draw[fill=white] (u8) circle (3pt);
				\draw[fill=white] (u9) circle (3pt);
				\draw[fill=white] (u0) circle (3pt);
				\draw[fill] (kkl) circle (2pt);
				\draw[fill] (kll) circle (2pt);
				\draw[fill] (llm) circle (2pt);
				\draw[fill] (lmm) circle (2pt);
				\draw[fill] (kkn) circle (2pt);
				\draw[fill] (knn) circle (2pt);
				\draw[fill] (nno) circle (2pt);
				\draw[fill] (noo) circle (2pt);
				\draw[fill] (oop) circle (2pt);
				\draw[fill] (opp) circle (2pt);
				\draw[fill] (mmp) circle (2pt);
				\draw[fill] (mpp) circle (2pt);
				\draw[fill] (d1) circle (2pt);
				\draw[fill] (d2) circle (2pt);
				\draw[fill] (d3) circle (2pt);
				\draw[fill] (d4) circle (2pt);
				\draw[fill] (d5) circle (2pt);
				\draw[fill] (d6) circle (2pt);
				\draw (u1) -- (noo);
				\draw (u1) -- (oop);
				\draw (u1) -- (d1);
				\draw (u2) -- (nno);
				\draw (u2) -- (d1);
				\draw (u2) -- (d2);
				\draw (u3) -- (opp);
				\draw (u3) -- (d1);
				\draw (u3) -- (d3);
				\draw (u4) -- (knn);
				\draw (u4) -- (d2);
				\draw (u4) -- (d4);
				\draw (u5) -- (d2);
				\draw (u5) -- (d3);
				\draw (u5) -- (d5);
				\draw (u6) -- (d3);
				\draw (u6) -- (mpp);
				\draw (u6) -- (d6);
				\draw (u7) -- (kkn);
				\draw (u7) -- (kkl);
				\draw (u7) -- (d4);
				\draw (u8) -- (d4);
				\draw (u8) -- (kll);
				\draw (u8) -- (d5);
				\draw (u9) -- (d5);
				\draw (u9) -- (llm);
				\draw (u9) -- (d6);
				\draw (u0) -- (d6);
				\draw (u0) -- (lmm);
				\draw (u0) -- (mmp);
				\draw[dotted] (k) to (m);
				\draw[dotted] (kl) to (kn);
				\draw[dotted] (lm) to (no);
				\draw[dotted] (kn) to (mp);
				\draw[dotted] (no) to (op);
				\draw[dotted] (kl) to (op);
				\draw[dotted] (lm) to (mp);
				\draw[dotted] (k) to (o);
				\draw[dotted] (o) to (m);
				\draw[dotted] (n) to (l);
				\draw[dotted] (n) to (p);
				\draw[dotted] (p) to (l);
				\end{tikzpicture}
				\vspace{-0.3 cm}
				\caption{Constructing the GL$_m$-dimer on a triangle. Here $m=4$. \label{fig:GL_m-dimer}}
			\end{center} 
		\end{figure}
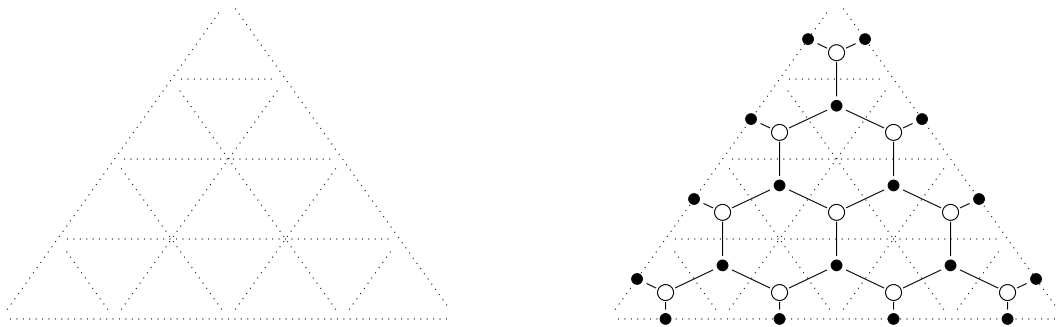\ \\
		%
		%
		%
		From such a subdivision we create a bipartite graph: We apply the following procedure to each triangle of the triangulation (see Figure \ref{fig:GL_m-dimer} (right)).
		\begin{itemize}
			\item Put black points on the midpoints of the short segments of the sides of the original triangle (e.g. either diagonals of the triangulation or edges of the polygon) and put black points into every downwards triangle.
			\item  Put a white point inside every upwards triangle.
			
		\end{itemize}
		Finally two points are connected if they differ in color and the points belong to the same small triangle or their small triangles have a side in common. 
		
		We call the resulting graph a \textit{GL$_m$-dimer}.
	\end{def_con}
	According to the first point of this list, there are exactly $m$  black points on each of the diagonals of the triangulation and on the edges of the initial polygon. Figure \ref{fig:dimer_pentagon} shows  a GL$_2$-dimer of a triangulated pentagon. 
	
	\begin{figure}[!htb]
		\begin{center}
			\begin{tikzpicture}[scale=1]
			\node(a)[label=above:$1$] at (5,5){};
			\node(b)[label=left:$2$] at (2.78,3.36){};
			\node(c)[label=left:$3$] at (3.65371, 0.74187){};
			\node(d)[label=right:$4$] at (6.4137, 0.76377){};
			\node(e)[label=right:$5$] at (7.24575, 3.39544){};
			\node(ab) at (3.89, 4.18){};
			\node(bc) at (3.21686, 2.05093){};
			\node(cd) at (5.03371, 0.75282){};
			\node(de) at (6.82973, 2.07961){};
			\node(ae) at (6.12288, 4.19772){};
			\node(aab) at (4.445, 4.59){};
			\node(abb) at (3.335, 3.77){};
			\node(bbc) at (2.99843, 2.70547){};
			\node(bcc) at (3.43529, 1.3964){};
			\node(ccd) at (4.34371, 0.74734){};
			\node(cdd) at (5.7237, 0.7583){};
			\node(dde) at (6.62171, 1.42169){};
			\node(dee) at (7.03774, 2.73752){};
			\node(aee) at (6.68431, 3.79658){};
			\node(aae) at (5.56144, 4.59886){};
			\node(u11) at (3.24567, 3.24522){};
			\node(u12) at (3.6063, 1.90788){};
			\node(u13) at (4.29751, 3.9214){};
			\node(d1) at (3.86175, 3.01982){};
			\node(u21) at (4.89856, 3.53072){};
			\node(u22) at (4.31253, 1.45709){};
			\node(u23) at (5.63485, 1.47212){};
			\node(d2) at (4.92861, 2.28354){};
			\node(u31) at (6.68669, 3.36543){};
			\node(u32) at (5.6649, 3.93643){};
			\node(u33) at (6.43124, 1.9229){};
			\node(d3) at (6.16077, 3.10998){};
			\node(m1) at (4.32686, 2.87093){};
			\node(m2) at (5.70685, 2.88189){};
			\node(aac) at (4.66343, 3.93547){};
			\node(acc) at (3.99029, 1.8064){};
			\node(aad) at (5.35343, 3.94094){};
			\node(add) at (6.06028, 1.82283){};
			\draw[fill=white] (u11) circle (3pt);
			\draw[fill=white] (u12) circle (3pt);
			\draw[fill=white] (u13) circle (3pt);
			\draw[fill=white] (u21) circle (3pt);
			\draw[fill=white] (u22) circle (3pt);
			\draw[fill=white] (u23) circle (3pt);
			\draw[fill=white] (u31) circle (3pt);
			\draw[fill=white] (u32) circle (3pt);
			\draw[fill=white] (u33) circle (3pt);
			\draw[fill] (d1) circle (2pt);
			\draw[fill] (d2) circle (2pt);
			\draw[fill] (d3) circle (2pt);
			\draw[fill] (aab) circle (2pt);
			\draw[fill] (abb) circle (2pt);
			\draw[fill] (bbc) circle (2pt);
			\draw[fill] (bcc) circle (2pt);
			\draw[fill] (ccd) circle (2pt);
			\draw[fill] (cdd) circle (2pt);
			\draw[fill] (dde) circle (2pt);
			\draw[fill] (dee) circle (2pt);
			\draw[fill] (aae) circle (2pt);
			\draw[fill] (aee) circle (2pt);
			\draw[fill] (aac) circle (2pt);
			\draw[fill] (acc) circle (2pt);
			\draw[fill] (aad) circle (2pt);
			\draw[fill] (add) circle (2pt);
			\draw[fill=gray] (a) circle (2pt);
			\draw[fill=gray] (b) circle (2pt);
			\draw[fill=gray] (c) circle (2pt);
			\draw[fill=gray] (d) circle (2pt);
			\draw[fill=gray] (e) circle (2pt);
			\draw (u11) -- (abb);
			\draw (u11) -- (bbc);
			\draw (u11) -- (d1);
			\draw (u12) -- (d1);
			\draw (u12) -- (bcc);
			\draw (u12) -- (acc);
			\draw (u13) -- (aab);
			\draw (u13) -- (d1);
			\draw (u13) -- (aac);
			\draw (u21) -- (aac);
			\draw (u21) -- (aad);
			\draw (u21) -- (d2);
			\draw (u22) -- (d2);
			\draw (u22) -- (ccd);
			\draw (u22) -- (acc);
			\draw (u23) -- (cdd);
			\draw (u23) -- (d2);
			\draw (u23) -- (add);
			\draw (u31) -- (aee);
			\draw (u31) -- (dee);
			\draw (u31) -- (d3);
			\draw (u32) -- (d3);
			\draw (u32) -- (aae);
			\draw (u32) -- (aad);
			\draw (u33) -- (dde);
			\draw (u33) -- (d3);
			\draw (u33) -- (add);
			\draw  (a) -- (b);
			\draw[dotted] (ab) to (bc);
			\draw[dotted] (ae) to (de);
			\draw[dotted] (ab) to (cd);
			\draw[dotted] (bc) to (m1);
			\draw[dotted] (m1) to (m2);
			\draw[dotted] (ae) to (cd);
			\draw[dotted] (m2) to (de);
			\draw[dotted] (m2) to (ae);
			\draw  (a) -- (e);
			\draw (e) -- (d);
			\draw (d) -- (c);
			\draw (c) -- (b);
			\draw[dotted] (a) to (c);
			\draw[dotted] (a) to (d);
			\end{tikzpicture}
			\vspace{-0.3 cm}
			\caption{A GL$_2$-dimer of a pentagon.}\label{fig:dimer_pentagon}
		\end{center}
	\end{figure}
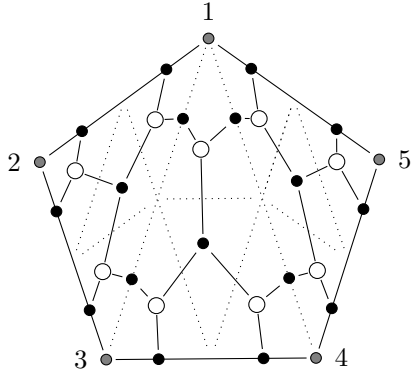 
	
	The resulting graph is bipartite and its complement splits the original surface into several connected components.
	
	
	\subsection{The boundary algebra of a GL$_m$-dimer} 
	
	We can associate a dimer model with boundary to a GL$_m$-dimer: 
	
	Put a vertex in each connected component of the complement of the GL$_m$-dimer.
	
	Then  connect adjacent components by arrows such that the white point of the dimer is on the left hand side of the arrow, shown in Figure \ref{fig:arrow_quiver}. Since the GL$_m$-dimer is bipartite, the quiver arising from it is a dimer model with boundary; each white vertex sits in a counterclockwise face and each black vertex in a clockwise face.
	Note that $Q_2^{+}$ and $Q_{2}^{-}$ are the set of all faces whose boundaries are oriented counterclockwise and clockwise respectively.
	
	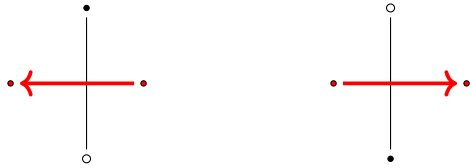
\begin{figure}[!htb]
		\begin{center}
			\begin{tikzpicture}[scale=0.5]
			\node(a) at (2,0){};
			\node(b) at (2,4){};
			\node(c) at (10,0){};
			\node(d) at (10,4){};
			\node(e) at (0,2){};
			\node(f) at (3.5,2){};
			\node(g) at (8.5,2){};
			\node(h) at (12,2){};
			\draw[fill=white] (a) circle (3pt);
			\draw[fill=white] (d) circle (3pt);
			\draw[fill=red] (e) circle (2pt);
			\draw[fill=red] (f) circle (2pt);
			\draw[fill=red] (g) circle (2pt);
			\draw[fill=red] (h) circle (2pt);
			\draw[fill] (b) circle (2pt);
			\draw[fill] (c) circle (2pt);
			\draw (a) -- (b);
			\draw (c) -- (d);
			\path[red,line width=0.5mm,->] (f) edge (e);
			\path[red,line width=0.5mm,->] (g) edge (h);
			\end{tikzpicture}
			\vspace{-0.3 cm}
			\caption{The white point of the dimer is on the left hand side of the arrow.\label{fig:arrow_quiver}}
		\end{center}
	\end{figure} 
	The quiver $Q$ of the GL$_m$-dimer of a triangle fulfills all aspects of Definition \ref{def:dimer_model} and hence it is a dimer model with boundary\footnote{We will often use the notation quiver instead of dimer model with boundary for readability of the article.}. An example of the quiver of the GL$_2$-dimer of a triangle is shown in Figure \ref{fig:quiver_triangle}. The boundary vertices of the quiver are denoted by $1$, $\ldots$, $6$. 
	
	\begin{figure}[!htb]
		\begin{center}
			\begin{tikzpicture}[scale=1]
			\node(a) at (-1,-1){};
			\node(b) at (2,-1){};
			\node(c) at (5,-1){};
			\node(d) at (0.5,1.12){};
			\node(e) at (2,3.24){};
			\node(f) at (3.5,1.12){};
			\node(ab) at (0.5, -1){};
			\node(bc) at (3.5, -1){};
			\node(ad) at (-0.25,0.06){};
			\node(de) at (1.25,2.18){};
			\node(ef) at (2.75,2.18){};
			\node(cf) at (4.25,0.06){};
			\node(abd) at (0.5,-0.3){};
			\node(bdf) at (2,0.4){};
			\node(bcf) at (3.5,-0.3){};
			\node(def) at (2,1.83){};
			
			\draw[fill=white] (abd) circle (3pt);
			\draw[fill=white] (def) circle (3pt);
			\draw[fill=white] (bcf) circle (3pt);
			\draw[fill] (ab) circle (2pt);
			\draw[fill] (bc) circle (2pt);
			\draw[fill] (ad) circle (2pt);
			\draw[fill] (de) circle (2pt);
			\draw[fill] (ef) circle (2pt);
			\draw[fill] (cf) circle (2pt);
			\draw[fill] (bdf) circle (2pt);

			\draw (ab) -- (abd);
			\draw (ad) -- (abd);
			\draw (de) -- (def);
			\draw (ef) -- (def);
			\draw (cf) -- (bcf);
			\draw (bc) -- (bcf);
			\draw (abd) -- (bdf);
			\draw (bcf) -- (bdf);
			\draw (def) -- (bdf);
			
			\draw[dotted] (a) to (b);
			\draw[dotted] (b) to (c);
			\draw[dotted] (a) to (d);
			\draw[dotted] (d) to (e);
			\draw[dotted] (e) to (f);
			\draw[dotted] (f) to (c);
			\draw[dotted] (a) to (b);
			\draw[dotted] (a) to (b);
			\draw[dotted] (a) to (b);
			\draw[dotted] (a) to (b);
			\draw[dotted] (a) to (b);
			\draw[dotted] (d) to (b);
			\draw[dotted] (d) to (f);
			\draw[dotted] (f) to (b);

			\node(1)[label=above: $\textcolor{red}1$] at (2,2.53){};
			\node(3)[label=right: $\textcolor{red}5$] at (4.25,-0.64){};
			\node(2)[label=left: $\textcolor{red}3$] at (-0.25,-0.64){};
			\node(23)[label=below: $\textcolor{red}4$] at (2,-0.64){};
			\node(13)[label=right: $\textcolor{red}6$] at (3.125,0.945){};
			\node(12)[label=left: $\textcolor{red}2$] at (0.875,0.945){};
			
			\path[red,line width=0.5mm,->] (1) edge (12);
			\path[red,line width=0.5mm,->] (12) edge (2);
			\path[red,line width=0.5mm,->] (2) edge (23);
			\path[red,line width=0.5mm,->] (23) edge (3);
			\path[red,line width=0.5mm,->] (3) edge (13);
			\path[red,line width=0.5mm,->] (13) edge (1);
			\path[red,line width=0.5mm,->] (23) edge (12);
			\path[red,line width=0.5mm,->] (12) edge (13);
			\path[red,line width=0.5mm,->] (13) edge (23);
			
			\draw[fill=red] (1) circle (2pt);
			\draw[fill=red] (2) circle (2pt);
			\draw[fill=red] (3) circle (2pt);
			\draw[fill=red] (12) circle (2pt);
			\draw[fill=red] (13) circle (2pt);
			\draw[fill=red] (23) circle (2pt);
			\end{tikzpicture}
			\vspace{-0.3 cm}
			\caption{Quiver of the GL$_2$-dimer of a triangle.\label{fig:quiver_triangle}}
		\end{center}
	\end{figure}
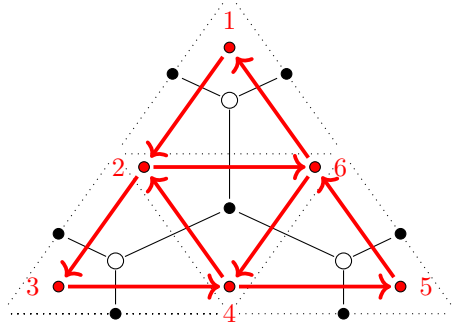 
	\vspace{-0.8 cm}
	\begin{defn}[chordless cycle]
		A chordless cycle of a quiver $Q$ is a cycle such that the full subquiver on its vertices is also a cycle. 
	\end{defn}
	
	\begin{remark}\label{prop:equivalent_cycle}
		The following fact is due to the definition of a dimer model with boundary:\\ 
		Let $Q$ be the dimer model of a GL$_m$-dimer and $\Lambda_Q$ the corresponding dimer algebra. Let $k$ be an arbitrary vertex of $\Lambda_Q$  with at least two incoming and two outgoing arrows. Then, up to $\partial W$, $c_1=c_2$ for any two chordless cycles $c_1$, $c_2$ starting at $k$.
	\end{remark}
	
	Examples of quivers of GL$_m$-dimers are shown in Figure $\ref{fig:quiver_triangle}$ for $m=2$  and in Figure \ref{fig:quiver_gl_5_quadrilateral} for $m=5$. In our particular setting, the number of faces (cycles) incident with a vertex $i$ are always $1$ or $3$ for boundary vertices and $4$ or $6$ for internal vertices.
	
	By Remark \ref{prop:equivalent_cycle}, all chordless cycles at a given vertex are equal and hence it makes sense to refer to any one of them as the cycle at this vertex.
	\begin{defn}[short cycle $u$]\label{def:short_cycle} 
		Let $i$ be a vertex of $Q $, then we write $u_i$ for a chordless cycle at $i$.
	\end{defn}


	\section{The boundary algebras of dimer models of GL$_2$-dimers of arbitrary triangulations of the $n$-gon are isomorphic} 
	
	Recall that $\mathcal{B}_Q= e_b \Lambda_Q e_b$ is the boundary algebra obtained from the quiver $Q$ of a GL$_2$-dimer of a triangulation of an $n$-gon, where $e_b=e_1+ \ldots + e_{2n}$ denotes the sum of all boundary idempotents.\\ We also recall the quiver $\Gamma(n)$ from the introduction. For $n=5$, it has the following form $\Gamma(5)$:
	\begin{figure}[!htb]
		\begin{center}
			\begin{tikzpicture}[scale=1.5]
			\node(a) at (5,5){};
			\node(b) at (2.78,3.36){};
			\node(c) at (3.65371, 0.74187){};
			\node(d) at (6.4137, 0.76377){};
			\node(e) at (7.24575, 3.39544){};
			\node(ab) at (3.89, 4.18){};
			\node(bc) at (3.21686, 2.05093){};
			\node(cd) at (5.03371, 0.75282){};
			\node(de) at (6.82973, 2.07961){};
			\node(ae) at (6.12288, 4.19772){};
			\node(aab) at (4.445, 4.59){};
			\node(abb) at (3.335, 3.77){};
			\node(bbc) at (2.99843, 2.70547){};
			\node(bcc) at (3.43529, 1.3964){};
			\node(ccd) at (4.34371, 0.74734){};
			\node(cdd) at (5.7237, 0.7583){};
			\node(dde) at (6.62171, 1.42169){};
			\node(dee) at (7.03774, 2.73752){};
			\node(aee) at (6.68431, 3.79658){};
			\node(aae) at (5.56144, 4.59886){};
			\node(u11) at (3.24567, 3.24522){};
			\node(u12) at (3.6063, 1.90788){};
			\node(u13) at (4.29751, 3.9214){};
			\node(d1) at (3.86175, 3.01982){};
			\node(u21) at (4.89856, 3.53072){};
			\node(u22) at (4.31253, 1.45709){};
			\node(u23) at (5.63485, 1.47212){};
			\node(d2) at (4.92861, 2.28354){};
			\node(u31) at (6.68669, 3.36543){};
			\node(u32) at (5.6649, 3.93643){};
			\node(u33) at (6.43124, 1.9229){};
			\node(d3) at (6.16077, 3.10998){};
			\node(m1) at (4.32686, 2.87093){};
			\node(m2) at (5.70685, 2.88189){};
			\node(aac) at (4.66343, 3.93547){};
			\node(acc) at (3.99029, 1.8064){};
			\node(aad) at (5.35343, 3.94094){};
			\node(add) at (6.06028, 1.82283){};

			\node(1)[label=above: $\textcolor{red}1$] at (5.00525, 4.65015){};
			\node(2)[label=above: $\textcolor{red}2$] at (3.91357, 3.90125){};
			\node(3)[label=left: $\textcolor{red}3$] at (3.0064, 3.3431){};
			\node(4)[label=left: $\textcolor{red}4$] at (3.36004, 2.29755){};
			\node(5)[label=below: $\textcolor{red}5$] at (3.79056, 0.95986){};
			\node(6)[label=below: $\textcolor{red}6$] at (5.02062, 0.95986){};
			
			\node(7)[label=below: $\textcolor{red}7$] at (6.2738, 0.95986){};
			\node(8)[label=right: $\textcolor{red}8$] at (6.63268, 2.2204){};
			\node(9)[label=right: $\textcolor{red}9$] at (6.98872, 3.32772){};
			\node(10)[label=above: $\textcolor{red}{10}$] at (6.15843, 3.89662){};
			
			\path[red,line width=0.5mm,->] (1) edge (2);
			\node(x2)[label=above: $\textcolor{red}{x_2}$] at (4.4,4.2){};
			\path[red,line width=0.5mm,->] (2) edge (3);
			\node(x3)[label=above: $\textcolor{red}{x_3}$] at (3.4,3.6){};
			\path[red,line width=0.5mm,->] (3) edge (4);
			\node(x4)[label=above: $\textcolor{red}{x_4}$] at (3.02,2.65){};
			\path[red,line width=0.5mm,->] (4) edge (5);
			\node(x5)[label=above: $\textcolor{red}{x_5}$] at (3.395,1.43){};
			\path[red,line width=0.5mm,->] (5) edge (6);
			\node(x6)[label=above: $\textcolor{red}{x_6}$] at (4.4,0.6){};
			\path[red,line width=0.5mm,->] (6) edge (7);
			\node(x7)[label=above: $\textcolor{red}{x_7}$] at (5.6,0.6){};
			\path[red,line width=0.5mm,->] (7) edge (8);
			\node(x8)[label=above: $\textcolor{red}{x_8}$] at (6.65,1.43){};
			\path[red,line width=0.5mm,->] (8) edge (9);
			\node(x9)[label=above: $\textcolor{red}{x_9}$] at (7.02,2.65){};
			\path[red,line width=0.5mm,->] (9) edge (10);
			\node(x10)[label=above: $\textcolor{red}{x_{10}}$] at (6.6,3.6){};
			\path[red,line width=0.5mm,->] (10) edge (1);
			\node(x1)[label=above: $\textcolor{red}{x_1}$] at (5.6,4.2){};

			\path[red,line width=0.5mm,->] (4) edge (2);
			\node(z4)[label=above: $\textcolor{red}{z_4}$] at (3.8,2.9){};
			\path[red,line width=0.5mm,->] (2) edge (10);
			\node(z2)[label=above: $\textcolor{red}{z_2}$] at (5,3.5){};
			\path[red,line width=0.5mm,->] (10) edge (8);
			\node(z10)[label=above: $\textcolor{red}{z_{10}}$] at (6.2,2.9){};
			\path[red,line width=0.5mm,->] (8) edge (6);
			\node(z8)[label=above: $\textcolor{red}{z_8}$] at (5.8,1.6){};
			\path[red,line width=0.5mm,->] (6) edge (4);
			\node(z6)[label=above: $\textcolor{red}{z_6}$] at (4.2,1.6){};

			\draw[fill=red] (1) circle (1pt);
			\draw[fill=red] (2) circle (1pt);
			\draw[fill=red] (3) circle (1pt);
			\draw[fill=red] (4) circle (1pt);
			\draw[fill=red] (5) circle (1pt);
			\draw[fill=red] (6) circle (1pt);
			\draw[fill=red] (7) circle (1pt);
			\draw[fill=red] (8) circle (1pt);
			\draw[fill=red] (9) circle (1pt);
			\draw[fill=red] (10) circle (1pt);
			\end{tikzpicture}
			\vspace{-0.3 cm}
			\caption{$\Gamma(5)$.\label{gamma_5}}
		\end{center}
	\end{figure} 
	\vspace{-0.8 cm}
	\begin{theorem}[Main Theorem]\label{th:main_theorem}
		
		The quiver of $\mathcal{B}_Q$ with relations $\partial W$ is isomorphic to $\Gamma(n)$ subject to the following relations (writing compositions of paths from left to right), for $i=1,\ldots,n$, where indices are considered modulo $2n$:
		\begin{eqnarray*}
			x_{2i+1}x_{2i+2} z_{2i+2} & = &  z_{2i}x_{2i-1}x_{2i} \\
			z_{2i}z_{2i-2} & = & x_{2i+1}x_{2i+2} \ldots x_{2i+2\cdot(n-2)}.
		\end{eqnarray*}
		\\ \
		Furthermore the element
		\begin{align*}
		t := \sum_{i=1}^{n}{x_{2i-1}x_{2i}z_{2i}}+ \sum_{i=1}^{n}{x_{2i}z_{2i}x_{2i-1}}
		\end{align*}
		is central in $\mathcal{B}_Q$.
	\end{theorem}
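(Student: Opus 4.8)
\emph{The plan} is to follow the two-stage strategy announced in the introduction: first establish the presentation of $\mathcal{B}_Q$ for a single \emph{fan} triangulation, then prove that the boundary algebra is invariant under diagonal flips; centrality of $t$ is then read off from the presentation. Since the flip graph of triangulations of the $n$-gon is connected, these three ingredients give the theorem for every triangulation.

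\textbf{Fan case.} First I would fix the fan triangulation $T_0$ (all diagonals through one vertex), draw its GL$_2$-dimer, and build the quiver $Q_{T_0}$, labelling the $2n$ boundary vertices $1,\dots,2n$ cyclically, two per edge of the polygon. The task is then to compute $e_b\Lambda_{Q_{T_0}}e_b$ by enumerating minimal boundary-to-boundary paths modulo $\partial W$. I expect exactly two families of generators: the boundary steps $x_j\colon (j-1)\to j$ and the interior shortcuts $z_{2i}\colon 2i\to 2i-2$ obtained by pushing a path across the fan. Using the potential relations $\overset{\alpha}{\cong}$ at the internal vertices together with Remark~\ref{prop:equivalent_cycle} (all chordless cycles at a vertex coincide), every boundary word should reduce to a word in the $x_j$ and $z_{2i}$, with the only surviving relations being the two displayed families. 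This identifies $Q_{T_0}$ with $\Gamma(n)$ and fixes the relations; the step is bookkeeping-heavy but routine once the dimer is drawn.

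\textbf{Flip invariance.} It suffices to show that flipping one diagonal induces an isomorphism $e_b\Lambda_{Q_T}e_b\cong e_b\Lambda_{Q_{T'}}e_b$. A flip alters the dimer only inside the quadrilateral formed by the two triangles adjacent to the diagonal, so the quiver changes only locally there, while the polygon edges, all $2n$ boundary vertices, and every boundary datum outside this quadrilateral are untouched. The plan is to produce an explicit bijection between minimal boundary paths before and after the flip that carries generators to generators and preserves both relation families: a boundary path avoiding the quadrilateral is left literally unchanged, while a path crossing it is rerouted through the new diagonal using the $\overset{\alpha}{\cong}$-relations. \emph{I expect this to be the main obstacle}: one must verify that every path crossing the quadrilateral is rerouted consistently and that the $\partial W$-relations on the two sides agree under the bijection, which requires a careful case analysis of how the $x$- and $z$-arrows enter and leave the flipped region.

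\textbf{Centrality of $t$.} Granting the presentation, centrality is short. Writing $u_v$ for the short cycle at $v$ (Definition~\ref{def:short_cycle}), one has $u_{2k}=x_{2k+1}x_{2k+2}z_{2k+2}$ and $u_{2k-1}=x_{2k}z_{2k}x_{2k-1}$, so that $t=\sum_{v=1}^{2n}u_v$ is the sum of one short cycle at each vertex. Then $e_i t=u_i=t e_i$, so $t$ commutes with every idempotent, and for an arrow $a\colon p\to q$ one has $at=a\,u_q$ and $ta=u_p\,a$; hence $t$ is central iff $a\,u_q=u_p\,a$ for every arrow $a$. For $a=x_{2k-1}$ both sides equal $x_{2k-1}x_{2k}z_{2k}x_{2k-1}$, so the identity is automatic; for $a=x_{2k}$ it reduces to left-multiplying the first relation $x_{2k+1}x_{2k+2}z_{2k+2}=z_{2k}x_{2k-1}x_{2k}$ by $x_{2k}$, and for $a=z_{2k}$ to right-multiplying the same relation by $z_{2k}$. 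Since the $x_j$ and $z_{2i}$ generate $\mathcal{B}_Q$, this proves that $t$ is central; notably only the first relation family is needed.
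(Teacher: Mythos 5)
Your proposal follows exactly the paper's strategy: the fan case is handled by identifying the generators $x_j$, $z_{2i}$ and verifying the two relation families (the paper's Lemma~\ref{lemma:generator_fan} and Proposition~\ref{prop:relation}), flip invariance is proved by a local rerouting argument at the flipped quadrilateral combined with connectivity of the flip graph (Lemma~\ref{lm:flip_equivalence} and Theorem~\ref{th:flip}, citing Hatcher), and centrality of $t$ follows from its being a sum of one short cycle per boundary vertex (Corollary~\ref{cor:central_element}). Your only deviation is cosmetic: you verify the commutation $a\,u_q = u_p\,a$ arrow by arrow against the relations, which is a correct, more explicit rendering of the one-line argument the paper gives.
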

	
	The proof of Theorem \ref{th:main_theorem} is split into two main steps. First, we consider fan triangulations and show by induction, that in this case the boundary algebra $\mathcal{B}$ has the desired structure for any $n$. The second step is to show that the flip of a diagonal does not change the boundary algebra i.e. $\mathcal{B}$ is flip-invariant. Using the fact that every triangulation of a polygon
	can be reached from any starting triangulation under application of finitely many
	flips (Theorem (a) in \cite{h}), we get the claimed result.


	\subsection{The boundary algebra of a fan triangulation}
	The goal of this chapter is to show that the boundary algebra of a  fan triangulation is isomorphic to the algebra $\mathcal{B}$. Before describing the boundary algebra, the structure of the quiver of a fan triangulation of an $n$-gon will be determined. We will write $Q_F(n)$ for the dimer model of the GL$_2$-dimer of a fan triangulation of the $n$-gon.
	
	\begin{figure}[!htb]
		\begin{center}
			\begin{tikzpicture}[scale=0.5]
			\node(1)[label=above: $\textcolor{red}1$] at (0,15.08202){};
			\node(2)[label=above: $\textcolor{red}2$] at (-2.77164,13.93397){};
			\node(3)[label=above: $\textcolor{red}3$] at (-4.89296, 11.81265){};
			\node(4)[label=left: $\textcolor{red}4$] at (-6.04101, 9.04101){};
			\node(5)[label=left: $\textcolor{red}5$] at (-6.04101, 6.04101){};
			\node(6)[label=below: $\textcolor{red}6$] at (-4.89296, 3.26937){};
			\node(7)[label=below: $\textcolor{red}7$] at (-2.77164, 1.14805){};
			\node(8)[label=below: $\textcolor{red}8$] at (0,0){};
			\node(9)[label=below: $\textcolor{red}9$] at (3,0){};
			\node(10)[label=below: $\textcolor{red}{10}$] at (5.77164, 1.14805){};
			\node(11)[label=right: $\textcolor{red}{11}$] at (7.89296, 3.26937){};
			\node(12)[label=right: $\textcolor{red}{12}$] at (9.04101, 6.04101){};
			\node(13)[label=right: $\textcolor{red}{13}$] at (9.04101, 9.04101){};
			\node(14)[label=above: $\textcolor{red}{14}$] at (7.89296, 11.81265){};
			\node(15)[label=above: $\textcolor{red}{15}$] at (5.77164, 13.93397){};
			\node(16)[label=above: $\textcolor{red}{16}$] at (3, 15.08202){};
			\node(i1)[label=right: $\textcolor{red}{i_1}$] at (-3.54703, 9.81999){};
			\node(i2) at (-1.86093, 6.54144){};
			\node(i2_label)[label=above: $\textcolor{red}{i_2}$] at (-1.66093, 6.34144){};
			\node(i3)[label=above: $\textcolor{red}{i_3}$] at (2, 6){};
			\node(i4) at (5.30503, 8.18071){};
			\node(i4_label)[label=left: $\textcolor{red}{i_4}$] at (5.45503, 8.28071){};
			\node(i5)[label=left: $\textcolor{red}{i_5}$] at (5.35186, 11.22507){};
			
			\draw[fill=red] (1) circle (1pt);
			\draw[fill=red] (2) circle (1pt);
			\draw[fill=red] (3) circle (1pt);
			\draw[fill=red] (4) circle (1pt);
			\draw[fill=red] (5) circle (1pt);
			\draw[fill=red] (6) circle (1pt);
			\draw[fill=red] (7) circle (1pt);
			\draw[fill=red] (8) circle (1pt);
			\draw[fill=red] (9) circle (1pt);
			\draw[fill=red] (10) circle (1pt);
			\draw[fill=red] (11) circle (1pt);
			\draw[fill=red] (12) circle (1pt);
			\draw[fill=red] (13) circle (1pt);
			\draw[fill=red] (14) circle (1pt);
			\draw[fill=red] (15) circle (1pt);
			\draw[fill=red] (16) circle (1pt);
			\draw[fill=red] (i1) circle (1pt);
			\draw[fill=red] (i2) circle (1pt);
			\draw[fill=red] (i3) circle (1pt);
			\draw[fill=red] (i4) circle (1pt);
			\draw[fill=red] (i5) circle (1pt);
			\node(x1)[label=above: $\textcolor{red}{x_1}$] at (1.5,14.88){}; 
			
			\path[red, line width=0.5mm,->] (16) edge  (1) ;
			\path[red, line width=0.5mm,->] (1) edge  (2) ;
			\path[red, line width=0.5mm,->] (2) edge  (3) ;
			\path[red, line width=0.5mm,->] (3) edge  (4) ;
			\path[red, line width=0.5mm,->] (4) edge  (5) ;
			\path[red, line width=0.5mm,->] (5) edge  (6) ;
			\path[red, line width=0.5mm,->] (6) edge  (7) ;
			\path[red, line width=0.5mm,->] (7) edge  (8) ;
			\path[red, line width=0.5mm,->] (8) edge  (9) ;
			\path[red, line width=0.5mm,->] (9) edge  (10) ;
			\path[red, line width=0.5mm,->] (10) edge  (11) ;
			\path[red, line width=0.5mm,->] (11) edge  (12) ;
			\path[red, line width=0.5mm,->] (12) edge  (13) ;
			\path[red, line width=0.5mm,->] (13) edge  (14) ;
			\path[red, line width=0.5mm,->] (14) edge  (15) ;
			\path[red, line width=0.5mm,->, shorten >=0.2cm] (15) edge  (16) ;
			
			\path[red, line width=0.5mm,->, shorten >=0.2cm] (16) edge (14)  ;
			\node(y16)[label=below: $\textcolor{red}{y_{16}}$] at (5.4,13.5){}; 
			
			\path[red, line width=0.5mm,->, shorten >=0.15cm] (i5) edge  (16) ;
			\node(a5)[label=below: $\textcolor{red}{\alpha_{5}}$] at (3.9,13.4){};
			\path[red, line width=0.5mm,->, shorten >=0.15cm] (14) edge  (i5) ;
			\node(c5)[label=below: $\textcolor{red}{\gamma_{5}}$] at (6.9,11.7){};
			\path[red, line width=0.5mm,->, shorten >=0.15cm] (i5) edge  (12) ;
			\node(b4)[label=below: $\textcolor{red}{\beta_{4}}$] at (7.3,10.2){};
			
			\path[red, line width=0.5mm,->, shorten >=0.15cm] (i4) edge  (i5) ;
			\node(a4)[label=below: $\textcolor{red}{\alpha_{4}}$] at (4.85,10.0){};
			\path[red, line width=0.5mm,->, shorten >=0.15cm] (12) edge  (i4) ;
			\node(c4)[label=below: $\textcolor{red}{\gamma_{4}}$] at (6.9,7.3){};
			\path[red, line width=0.5mm,->, shorten >=0.15cm] (i4) edge  (10) ;
			\node(b3)[label=below: $\textcolor{red}{\beta_{3}}$] at (6.0,5.4){};

			\path[red, line width=0.5mm,->, shorten >=0.15cm] (i3) edge  (i4) ;
			\node(a3)[label=below: $\textcolor{red}{\alpha_{3}}$] at (3.3,8.1){};
			\path[red, line width=0.5mm,->, shorten >=0.15cm] (10) edge  (i3) ;
			\node(c3)[label=below: $\textcolor{red}{\gamma_{3}}$] at (3.9,4.8){};
			\path[red, line width=0.5mm,->, shorten >=0.15cm] (i3) edge  (8) ;
			\node(b2)[label=below: $\textcolor{red}{\beta_{2}}$] at (0.55,4.0){};

			\path[red, line width=0.5mm,->, shorten >=0.15cm] (i2) edge  (i3) ;
			\node(a2)[label=below: $\textcolor{red}{\alpha_{2}}$] at (0.2,7.3){};
			\path[red, line width=0.5mm,->, shorten >=0.15cm] (8) edge  (i2) ;
			\node(c2)[label=below: $\textcolor{red}{\gamma_{2}}$] at (-0.6,4.0){};
			\path[red, line width=0.5mm,->, shorten >=0.15cm] (i2) edge  (6) ;
			\node(b1)[label=below: $\textcolor{red}{\beta_{1}}$] at (-3.0,5.4){};
			
			\path[red, line width=0.5mm,->, shorten >=0.15cm] (i1) edge  (i2) ;
			\node(a1)[label=below: $\textcolor{red}{\alpha_{1}}$] at (-2.15,8.9){};
			\path[red, line width=0.5mm,->, shorten >=0.15cm] (6) edge  (i1) ;
			\node(c1)[label=below: $\textcolor{red}{\gamma_{1}}$] at (-3.9,7.0){};
			\path[red, line width=0.5mm,->, shorten >=0.15cm] (i1) edge  (4) ;
			\node(b0)[label=below: $\textcolor{red}{\beta_{0}}$] at (-4.9,9.5){};
			
			\path[red, line width=0.5mm,->, shorten >=0.15cm] (2) edge  (i1) ;
			\node(a0)[label=below: $\textcolor{red}{\alpha_{0}}$] at (-2.7,12.0){};
			\path[red, line width=0.5mm,->, shorten >=0.15cm] (4) edge  (2) ;
			\node(y4)[label=below: $\textcolor{red}{y_{4}}$] at (-4.0,12.0){};
			\end{tikzpicture}
			\vspace{-0.3 cm}
			\caption{$Q_F(8)$: The quiver of a fan triangulation of the octagon.\label{labelled_quiver_octagon}}
		\end{center}
	\end{figure}
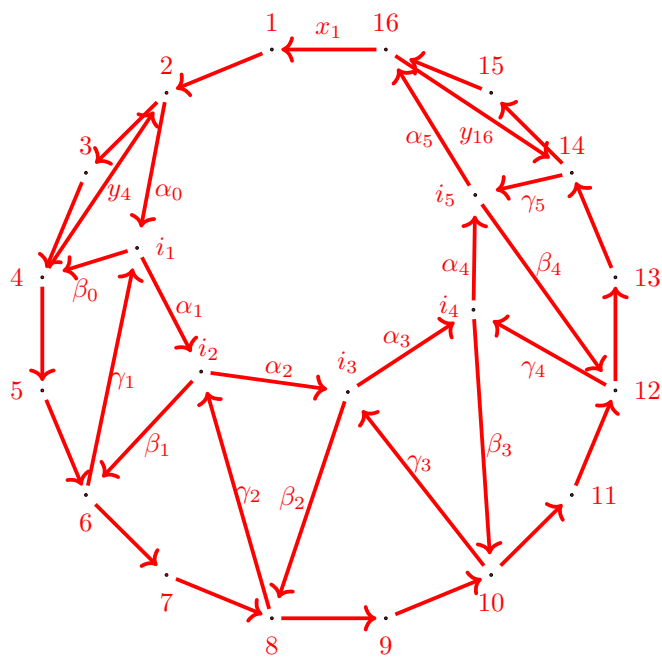 
	\begin{prop}\label{prop:quiver_fan_triangulation}
		Let $Q_F(n)$ be the  dimer model with boundary of a fan triangulation of the $n$-gon, $n\geq3$. Then $Q_F(n)$ has the following form: \\
		It consists of $2n$ vertices on the boundary, labelled anticlockwise by $1 , \ldots, 2n$, and $n-3$ internal vertices labelled $i_1 , \ldots, i_{n-3}$. \\
		Furthermore it has $2n+2$ arrows between the boundary vertices, with $k\in [1,2n]$ and indices taken modulo $2n$,
		\begin{align*}
		x_k &: k-1 \rightarrow k  \\
		y_4 &: 4\rightarrow 2 \\
		y_{2n} &: 2n\rightarrow 2n-2,
		\end{align*}
		and arrows
		\begin{align*}
		\alpha_0 &:2 \rightarrow i_1 \ \ & \ \\
		\alpha_k &: i_k \rightarrow i_{k+1}  \ \ &1 \leq k < n-3 \\
		\alpha_{n-3}&: i_{n-3} \rightarrow 2n & \ \\
		\beta_{k-1}&: i_k\rightarrow 2k +2 \ \ &1 \leq k \leq n-3 \\
		\gamma_k &: 2k+4 \rightarrow i_k \ \ &1 \leq k \leq n-3. 
		\end{align*}
	\end{prop}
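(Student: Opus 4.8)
The plan is to prove the statement by induction on $n$, building the fan one triangle at a time and reading the quiver off the GL$_2$-dimer by the rule of Figure \ref{fig:arrow_quiver} (one quiver vertex in each complementary region, one arrow across each shared bipartite edge, the white dimer-vertex kept on the left). For the base case $n=3$ a single triangle is its own fan, and both its GL$_2$-dimer and the associated quiver are those of Figure \ref{fig:quiver_triangle}. One checks directly that this quiver has the $2\cdot 3=6$ boundary vertices $1,\dots,6$ and no internal vertex ($n-3=0$), that the six boundary arrows form the cycle $x_1,\dots,x_6$, and that the three remaining arrows $4\to 2$, $2\to 6$, $6\to 4$ are $y_4$, the degenerate $\alpha$-chain, and $y_{2n}=y_6$. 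This agrees with the statement under the convention that for $n-3=0$ the path $2\to i_1\to\cdots\to i_{n-3}\to 2n$ collapses to the single arrow $\alpha_0=\alpha_{n-3}\colon 2\to 2n$.

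For the inductive step I assume $Q_F(n-1)$ has the asserted form and obtain the fan of the $n$-gon by attaching one triangle at the apex: the apex-incident boundary edge $P_1P_{n-1}$ of the $(n-1)$-gon becomes an internal diagonal, along which the new triangle $P_1P_{n-1}P_n$ is glued. Since the GL$_2$-dimer of a triangulation is the union of the dimers of its triangles glued along the shared diagonals, the quiver of the $n$-gon fan is obtained from $Q_F(n-1)$ by a modification supported in a neighbourhood of this new diagonal; all arrows of $Q_F(n-1)$ not incident to the old apex edge survive unchanged, so I may keep the labels $1,\dots,2(n-1)$ and only record what happens near the high-index end.

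Reading off the local change, the claim to verify is that exactly the following occurs, and nothing else: the old special $3$-cycle at the apex edge, carried by $y_{2n-2}\colon 2n-2\to 2n-4$, disappears because that edge is no longer on the boundary; the arrow $\alpha_{n-4}$, which in $Q_F(n-1)$ landed on the boundary vertex $2n-2$, is redirected into a newly created internal vertex $i_{n-3}$; this new internal vertex acquires the outgoing arrows $\alpha_{n-3}\colon i_{n-3}\to 2n$ and $\beta_{n-4}\colon i_{n-3}\to 2n-4$ and the incoming arrow $\gamma_{n-3}\colon 2n-2\to i_{n-3}$; and the two fresh polygon edges of the new triangle extend the boundary cycle through two new vertices by $x_{2n-1}\colon 2n-2\to 2n-1$ and $x_{2n}\colon 2n-1\to 2n$ (so that $x_1$ now issues from $2n$), while creating the new special arrow $y_{2n}\colon 2n\to 2n-2$. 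Substituting $k=n-3$ into the formulas of the statement shows that these are precisely the arrows of $Q_F(n)$ that are new or altered relative to $Q_F(n-1)$, and that every previously present arrow retains its prescribed source and target; hence $Q_F(n)$ has the asserted form.

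I expect the main obstacle to be this local computation together with its orientation bookkeeping. Concretely one must (i) identify which complementary regions along the former apex edge merge when the new triangle is attached, producing the single internal vertex $i_{n-3}$, and which survive as new boundary vertices; (ii) apply the white-on-the-left rule at the new bipartite edges to pin down the orientations, thereby explaining why the attachment destroys the old $y_{2n-2}$ but creates a new $y_{2n}$, i.e. why a special $y$-arrow persists only at the current apex edges and nowhere else; and (iii) check that the redirected $\alpha_{n-4}$ and the new $\beta,\gamma$ arrows meet $i_{n-3}$ with the stated incidences. A final, routine verification is that the modified quiver is still a dimer model with boundary in the sense of Definition \ref{def:dimer_model}; in particular the incidence graph at $i_{n-3}$ must be an unoriented cycle, which holds automatically because $i_{n-3}$ arises from gluing two triangle-dimers along the new diagonal.
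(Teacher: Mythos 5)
Your induction skeleton (base case the triangle of Figure \ref{fig:quiver_triangle}, inductive step gluing one triangle to the end of the fan) is the same as the paper's, and your handling of the degenerate $n=3$ case is fine. But there is a genuine gap at the heart of the inductive step: you read the quiver directly off the glued dimer and assert that ``exactly the following occurs, and nothing else.'' That assertion is false for the honestly glued dimer. When the old boundary edge $(1,n-1)$ becomes a diagonal, its two black points acquire degree $2$ (one white neighbour on each side), and each such black vertex produces a $2$-cycle in the quiver --- which Definition \ref{def:dimer_model} explicitly permits, so nothing in the dimer-model axioms removes them for you. The statement of Proposition \ref{prop:quiver_fan_triangulation} only holds after these $2$-cycles are eliminated using the relations $\partial W$ (equivalently, after contracting the degree-$2$ black vertices and merging their white neighbours, the move of Figure \ref{fig:replacing_dimer}). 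This reduction is precisely the mechanism of the paper's proof (see Figures \ref{fig:quiver_general_reducing_new} and \ref{fig:quiver_general_reduced_new}, and the Remark following the proof, which makes the tacit-reduction convention explicit), and it is entirely absent from your argument. Your items (i)--(iii) defer the local computation, but even the target you set for it is misstated, because without the reduction the new quiver simply does not have the claimed form.

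The omission also corrupts your bookkeeping. It is not true that ``all arrows of $Q_F(n-1)$ not incident to the old apex edge survive unchanged'' with all old labels kept and only two boundary vertices appended. In fact: the old middle region of the glued edge (old boundary vertex $2n-3$) loses all boundary contact and, merged across the diagonal with a region of the new triangle, \emph{is} the new internal vertex $i_{n-3}$ --- it is not ``newly created'' alongside a surviving vertex $2n-3$; the old apex region is relabelled $2n$; \emph{three} new boundary regions appear (middle of $(n-1,n)$, corner at $n$, middle of $(n,1)$), so four new boundary arrows $x_{2n-3},\ldots,x_{2n}$ arise, not two; and the old arrows $x_{2n-3}$, $x_{2n-2}$, $y_{2n-2}$ of $Q_F(n-1)$ do not retain their sources and targets --- they either become $2$-cycle arrows that are removed (identified with longer paths via $\partial W$) or reappear with new endpoints only after the white-vertex contractions. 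Your phrase ``$\alpha_{n-4}$ is redirected'' papers over exactly this: an arrow's head cannot change under mere gluing; it changes because the reduction reorganizes the faces. To repair the proof you must glue, identify the region merges (corner regions and the middle region across the now-internal arcs), then perform the two contractions along the new diagonal and verify, as the paper does, that precisely two new faces result and the surviving arrows are those listed in the proposition.
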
 
	\begin{rem}
		The quiver $Q_F(n)$ has $2n-2$ faces.
	\end{rem}
	The quiver $Q_F(8)$ is illustrated in Figure \ref{labelled_quiver_octagon}.
	\begin{proof}
		The claim follows inductively by removing  $2$-cycles of the dimer model using the relations obtained by the natural potential $W$. Note that on the dimer, this reduction corresponds to replacing every subgraph of the form shown on left hand side in Figure \ref{fig:replacing_dimer} to the form shown on the right hand side.
		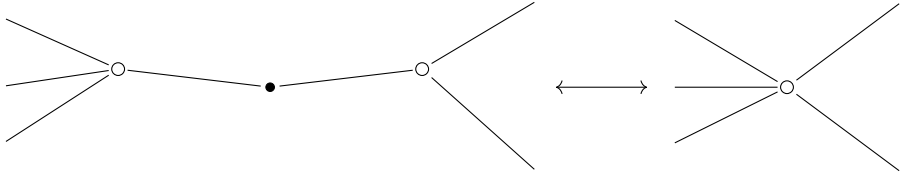
\begin{figure}[!htb]
			\begin{center}
				\begin{tikzpicture}[scale=0.8]
				\node(a) at (1.5,0){};
				\node(b) at (-1,0.3){};
				\node(c) at (4,0.3){};
				\node(b1) at (-3,1.2){};
				\node(b2) at (-3,0){};
				\node(b3) at (-3,-1.0){};
				\node(c1) at (6,1.5){};
				\node(c2) at (6,-1.5){};
				
				\draw[<->] (6.2,0) -- (7.7,0);
				
				\node(d) at (10,0){};
				\node(d1) at (8,1.2){};
				\node(d2) at (8,0){};
				\node(d3) at (8,-1.0){};
				\node(d4) at (12,1.5){};
				\node(d5) at (12,-1.5){};
				\draw[fill=white] (b) circle (3pt);
				\draw[fill=white] (c) circle (3pt);
				\draw[fill=white] (d) circle (3pt);
				\draw[fill] (a) circle (2pt);
				\draw (a) -- (b);
				\draw (c) -- (a);
				\draw (b) -- (b1);
				\draw (b) -- (b2);
				\draw (b) -- (b3);
				\draw (c) -- (c1);
				\draw (c) -- (c2);
				\draw (d) -- (d1);
				\draw (d) -- (d2);
				\draw (d) -- (d3);
				\draw (d) -- (d4);
				\draw (d) -- (d5);
				\end{tikzpicture}
				\vspace{-0.3 cm}
				\caption{Reduction procedure.\label{fig:replacing_dimer}}
			\end{center}
		\end{figure} 
		The case $n=3$ is the induction basis. The quiver of $Q_F(3)$ has the claimed form, see Figure \ref{fig:quiver_triangle}.\\
		Assume that $Q_F(n)$ has the described form for $n\geq 3$ and consider the GL$_2$-dimer of the fan triangulation of the $n\!+\!1$-gon. It can be obtained from the fan triangulation  of the $n$-gon by adding a triangle to it at vertex $1$ to the end of the fan. The dimer is obtained analogously.
		Observe that the same reduction steps can be done for the GL$_2$-dimer of the $n\!+\!1$-gon as for the one of the $n$-gon, because the only difference between the two triangulations is the  additional triangle between $1$, $n$ and $n\!+\!1$ , which does not change the GL$_2$-dimer of the former $n$-gon. The reduction steps are the one described in Figure \ref{fig:replacing_dimer}.
		\begin{figure}[!htb]
			\begin{center}
				\includegraphics[scale=0.28]{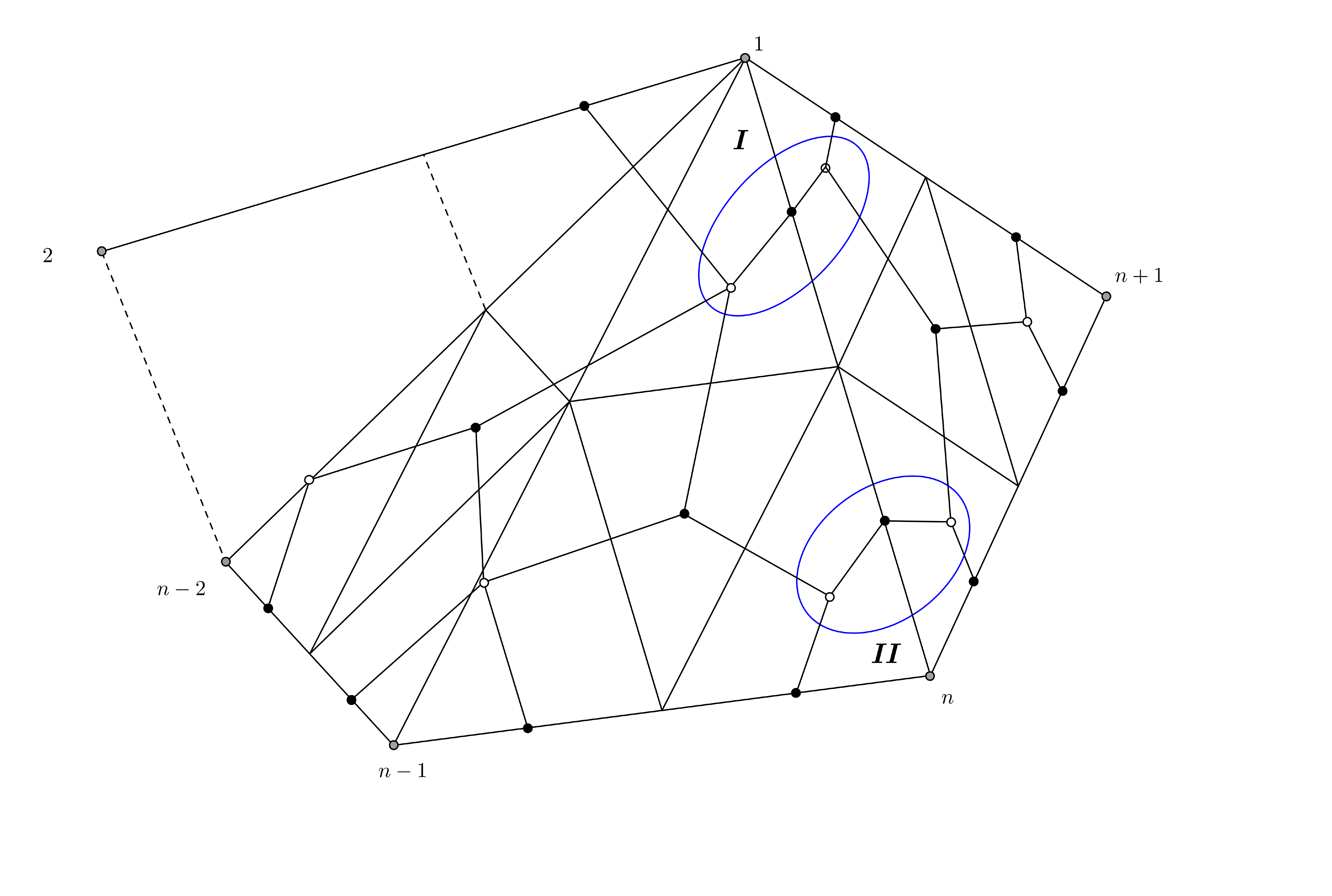}
				\vspace{-2.0 cm}
				\caption{A fan triangulation of the $n\!+\!1$-gon and new part of the dimer. The labelling corresponds to the vertices of the $n\!+\!1$-gon. \label{fig:quiver_general_reducing_new}}
			\end{center}
		\end{figure} 
		Figure \ref{fig:quiver_general_reducing_new} shows the relevant part of the reduced dimer, i.e. the new part obtained by increasing the number of vertices of the polygon. Note that it is possible to reduce the new dimer as the regions $I$ and $II$ indicate. This leads to the reduced dimer shown in Figure \ref{fig:quiver_general_reduced_new}, containing the reduced quiver $Q_F(n+1)$. The new quiver has $2$ additional faces, the chordless cycles $2n$,$2n+1$,$2n+2$ and $2n$,$i_{n-2}$,$2n+2$. So it has the claimed structure. 
		
		\begin{figure}[!htb]
			\begin{center}
				\includegraphics[scale=0.28]{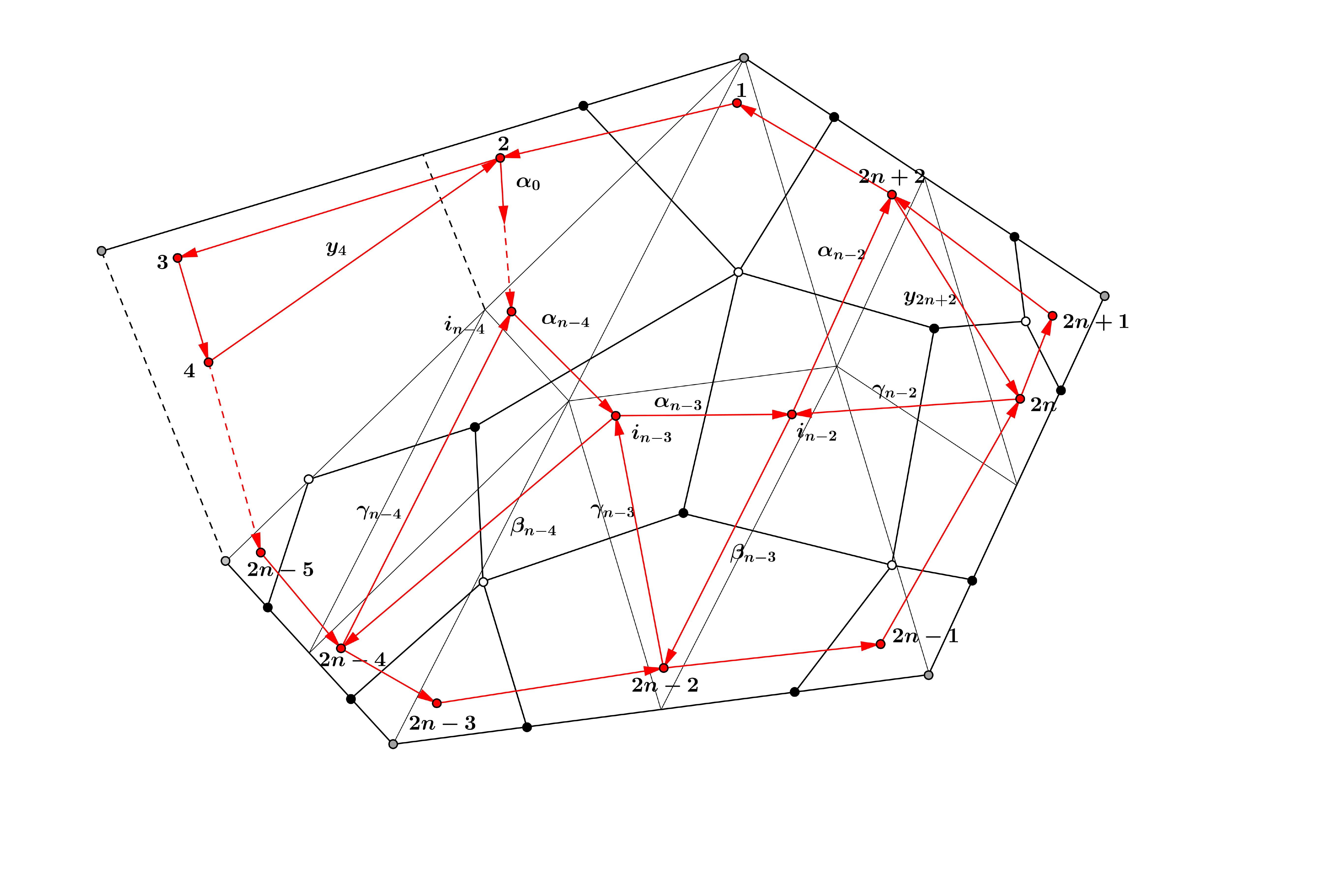}
				\vspace{-2.3 cm}
				\caption{Part of a reduced GL$_2$-dimer of the $n\!+\!1$-gon and the quiver $Q(n+1)$.\label{fig:quiver_general_reduced_new}}
			\end{center}
		\end{figure} 
	\end{proof}
	\begin{rem}
		Whenever we have a $2$-cycle of internal arrows in the quiver of a GL$_m$-dimer, we can remove it using the relations from the potential. From now on we will always tacitly remove such $2$-cycles from our dimer model and will use the phrases GL$_m$-dimer and quiver instead of "reduced GL$_m$-dimer" and "reduced quiver".	
	\end{rem}
	Knowing the structure of the quiver of a fan triangulation in detail, it is now  possible to describe the boundary algebra of the $n$-gon.
	
	\begin{defn}\label{def:z_2k}
		We define paths $z_{2},\ldots,z_{2n}$ as follows:
		\begin{align}
		z_{2k} &:=  \gamma_{k-2}\beta_{k-3} \text{ for $k=3,\ldots,n-1$} \label{eq:general_boundary_algebra_1} \\
		z_4 &:= y_4   \label{eq:general_boundary_algebra_2}\\
		z_2 &:= \alpha_0 \alpha_1 \ldots \alpha_{n-3}  \label{eq:general_boundary_algebra_3}\\
		z_{2n} &:= y_{2n}.  \label{eq:general_boundary_algebra_4}
		\end{align}
	\end{defn}
	\begin{lm}\label{lemma:generator_fan}
		The $x_i$, $i=1,\ldots,2n$ together with the $z_{2k}$, $k=1,\ldots,n$ as defined in Definition \ref{def:z_2k} generate the boundary algebra of $Q_F(n)$.
	\end{lm}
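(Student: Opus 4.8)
The plan is to reduce the statement to a classification of the \emph{irreducible} boundary paths and then to rewrite each of these, modulo $\partial W$, as a product of the $x_i$ and the $z_{2k}$. First I would note that $\mathcal{B}_Q=e_b\Lambda_Q e_b$ is spanned by the paths of $\Lambda_Q$ that start and end at a boundary vertex, and that as an algebra it is generated by those boundary paths which visit no intermediate boundary vertex: any other boundary path splits at such a vertex into a product of two strictly shorter boundary paths, so an induction on length reduces everything to these \emph{irreducible} boundary paths.

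Next I would classify the irreducible boundary paths from the explicit shape of $Q_F(n)$ in Proposition \ref{prop:quiver_fan_triangulation}. The arrows meeting the boundary are the $x_k$, the two backward arrows $y_4=z_4$ and $y_{2n}=z_{2n}$, the arrows $\alpha_0,\gamma_a$ leaving the boundary, and the arrows $\alpha_{n-3},\beta_{b-1}$ returning to it; among the internal vertices the only arrows are the forward arrows $\alpha_k\colon i_k\to i_{k+1}$. Hence an irreducible boundary path is either a single arrow $x_k$, $y_4$ or $y_{2n}$, or an \emph{excursion} that leaves the boundary at vertex $2$ (via $\alpha_0$) or at $2a+4$ (via $\gamma_a$), runs forward through consecutive internal vertices, and re-enters the boundary at $2b+2$ (via $\beta_{b-1}$) or at $2n$ (via $\alpha_{n-3}$). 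This yields four families of excursions, whose shortest members are $\gamma_a\beta_{a-1}=z_{2a+4}$ and the full traverse $\alpha_0\alpha_1\cdots\alpha_{n-3}=z_2$.

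The heart of the argument is to collapse every excursion to a product of $x$'s and $z$'s, and for this I would harvest local relations from Remark \ref{prop:equivalent_cycle}. At an internal vertex $i_a$ the two chordless cycles $\alpha_a\beta_a\gamma_a$ and $\beta_{a-1}x_{2a+3}x_{2a+4}\gamma_a$ are equal, so differentiating at the common arrow $\gamma_a$ gives $\alpha_a\beta_a\overset{\gamma_a}{\cong}\beta_{a-1}x_{2a+3}x_{2a+4}$; iterating this peels the internal arrows off the tail of a ``$\gamma$-to-$\beta$'' excursion and collapses it to $z_{2a+4}\,x_{2a+3}\cdots x_{2b+2}$. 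Comparing instead the two chordless cycles through the vertex $2a+4$ gives $\gamma_a\alpha_a\overset{\beta_a}{\cong}x_{2a+5}x_{2a+6}\gamma_{a+1}$, which peels arrows off the \emph{front}; together with the two boundary $3$-cycle relations $\alpha_0\beta_0\overset{y_4}{\cong}x_3x_4$ and $\gamma_{n-3}\alpha_{n-3}\overset{y_{2n}}{\cong}x_{2n-1}x_{2n}$ coming from the special vertices $2$ and $2n$, this rewrites the excursions entering via $\alpha_0$ and those exiting via $\alpha_{n-3}$ as pure products of consecutive $x_k$. Thus each of the four families is exhibited as a product of the claimed generators, while $z_2$ itself is kept as a generator.

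I expect the main obstacle to be exactly the bookkeeping at the two special vertices: the arrows $\alpha_0$ and $\alpha_{n-3}$ attach an internal vertex directly to the boundary rather than to another internal vertex, so the generic peeling relations degenerate there and must be replaced by the $3$-cycle relations above, and one must track the index ranges so that each peeling terminates in a genuine generator rather than a spurious $\gamma_{n-2}$ or $\beta_{-1}$. Checking the small cases $n=3,4$ separately (where there are few or no internal vertices and several of these relations collapse) completes the reduction, after which every irreducible boundary path is a product of the $x_i$ and $z_{2k}$, proving the lemma.
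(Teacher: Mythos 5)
Your proposal is correct, and it runs on exactly the same computational engine as the paper's proof: the cyclic-derivative relations of the fan quiver from Proposition \ref{prop:quiver_fan_triangulation}, namely $\alpha_a\beta_a\overset{\gamma_a}{\cong}\beta_{a-1}x_{2a+3}x_{2a+4}$ together with the degenerate relations at $y_4$ and $y_{2n}$, which are precisely the steps the paper chains together in (\ref{calculation:product_x}). The logical organization, however, is genuinely different. The paper singles out the hardest family --- paths from $2$ to $2n$ --- and argues by contradiction and descent: a cycle-free path missing some $\alpha_k$ (with $k$ minimal) must read $\beta_{k-1}x_{2k+1}x_{2k+2}\cdots$, and the $\gamma_k$-derivative re-inserts $\alpha_k$, contradicting minimality; the remaining families are dismissed with ``similar arguments.'' You instead prove generation directly: reduce by induction on path length to boundary paths with no intermediate boundary vertex, classify these into single arrows and four excursion families using the forward-only structure of the internal strip, and collapse each excursion explicitly, peeling from the back via the $\gamma_a$-derivatives or from the front via the $\beta_a$-derivatives $\gamma_a\alpha_a\overset{\beta_a}{\cong}x_{2a+5}x_{2a+6}\gamma_{a+1}$ --- a relation the paper never states but needs for the $\gamma$-to-$\alpha_{n-3}$ excursions it leaves implicit. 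Your version is therefore the more complete write-up, and incidentally your indices $\beta_{a-1}x_{2a+3}x_{2a+4}$ are the ones consistent with Proposition \ref{prop:quiver_fan_triangulation} (the paper's $\beta_{k-1}x_{2k+1}x_{2k+2}$ is shifted by two). What the paper's formulation buys in exchange is the statement ``every path from $2$ to $2n$ factors through $z_2$,'' which is quoted verbatim in the proof of Lemma \ref{lm:flip_equivalence} and also underpins the minimality remark following the lemma; if you later need flip invariance, you should extract that factorization statement from your type-(D) analysis, where it follows since your rewriting shows every $2$-to-$2n$ path is equivalent to $z_2$ composed with cycles.
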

	\begin{rem}
		The set $\left \{ \{x_i\}_{1\leq i \leq 2n}, \{z_{2j} \}_{1\leq j \leq n} \right \}$ is minimal in the sense that any proper subset does not suffice.
	\end{rem}
	\begin{proof}
		For the convenience of the reader, Figure \ref{labelled_quiver_octagon} shows $Q_F(8)$ in order to make it easier to follow the argumentation by using relations obtained by the natural potential $W$ for different internal arrows.\\
		We show that each path from $2$ to $2n$ factors through $z_2$. Assume to the contrary that a path $\delta$ from $2$ to $n$ does not factor through $z_2$. We can assume that $\delta$ does not contain cycles. Then the following two cases can occur: Either $\delta=x_3x_4\cdots x_{2n}$ or there exists a $k$, $1\leq k \leq n-3$ such that $\alpha_k$ is not an arrow of $\delta$, w.l.o.g let $k$ be minimal, i.e. $\delta= \alpha_0 \ldots \alpha_{k-1} \widetilde{\delta}$. In the first case we get the equivalence of the following paths: 
		\begin{align}\label{calculation:product_x}
		x_3x_4\cdots x_{2n} \overset{y_4}{\cong}\alpha_0 \beta_0 x_5x_6 \cdots x_{2n} \overset{\gamma_1}{\cong} \ldots \overset{\gamma_{n-3}}{\cong} \alpha_0 \cdots \alpha_{n-3} y_{2n} x_{2n-1}x_{2n}=z_{2} u_{2n}.  
		\end{align}
		The last equality holds as the path $y_{2n}x_{2n-1}x_{2n}$ is a chordless cycle at $2n$. Hence $\delta$ factors through $z_{2}$ in the first case.
		In the second case, as $\delta$ does not contain cycles, $\widetilde{\delta}$ must be of the form $\beta_{k-1} x_{2k+1}x_{2k+2}\widetilde{\delta}'$ for some path $\widetilde{\delta}'$. Then
		\begin{align*}
		\widetilde{\delta}= \beta_{k-1} x_{2k+1}x_{2k+2}\widetilde{\delta}' \overset{\gamma_k}{\cong} \alpha_k \beta_k \widetilde{\delta}'
		\end{align*}
		which is a contradiction to the minimality of $k$. Hence every path from $2$ to $2n$ factors through $z_{2}$. The rest of the statement follows with similar arguments.
	\end{proof}
	
	Now we want to prove that the relations between the arrows, stated in the previous section are fulfilled for the boundary algebra of the fan triangulation of a polygon. Let $\Lambda_{Q_F(n)}$ be the dimer algebra of $Q_F(n)$ and let $e_b = \sum_{k=1}^{2n}{e_k}$. Note that we will always reduce indices modulo $2n$.
	
	\begin{prop}\label{prop:relation}
		The boundary algebra $e_b \Lambda_{Q_F(n)} e_b$ satisfies the following relations for $k\in[1,n]$:
		\begin{itemize}
			\item[(I.)] $z_{2k}z_{2k-2}  =  x_{2k+1}x_{2k+2} \ldots x_{2k+2\cdot(n-2)}$
			\item[(II.)]  $x_{2k+1} x_{2k+2} z_{2k+2} = z_{2k} x_{2k-1} x_{2k}$.
		\end{itemize}
	\end{prop}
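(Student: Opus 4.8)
The plan is to derive both families of relations directly from the cyclic derivatives $\partial W$ of the quiver $Q_F(n)$ of Proposition \ref{prop:quiver_fan_triangulation}, exactly in the spirit of the fold carried out in the proof of Lemma \ref{lemma:generator_fan}. First I would record the relations coming from the internal (multiplicity-$2$) arrows: at $y_4$ one gets $x_3x_4\overset{y_4}{\cong}\alpha_0\beta_0$; at $\gamma_k$ for $1\le k\le n-4$ one gets $\beta_{k-1}x_{2k+3}x_{2k+4}\overset{\gamma_k}{\cong}\alpha_k\beta_k$, while $\gamma_{n-3}$ gives $\beta_{n-4}x_{2n-3}x_{2n-2}\overset{\gamma_{n-3}}{\cong}\alpha_{n-3}y_{2n}$; at $y_{2n}$ one gets $x_{2n-1}x_{2n}\overset{y_{2n}}{\cong}\gamma_{n-3}\alpha_{n-3}$; and at each $\alpha_j$ the two incident faces (the small triangle $\gamma_j\alpha_j\beta_j$ and the long face $z_2x_1x_2=\alpha_0\cdots\alpha_{n-3}x_1x_2$) yield $\beta_j\gamma_j\overset{\alpha_j}{\cong}\alpha_{j+1}\cdots\alpha_{n-3}x_1x_2\alpha_0\cdots\alpha_{j-1}$. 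These, together with the $\partial W$-equivalence of chordless cycles (Remark \ref{prop:equivalent_cycle}), are the only tools I would use.

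For relation (II.) I would argue entirely through Remark \ref{prop:equivalent_cycle}. The two words $x_{2k+1}x_{2k+2}z_{2k+2}$ and $z_{2k}x_{2k-1}x_{2k}$ are both cycles based at the boundary vertex $2k$, which by Proposition \ref{prop:quiver_fan_triangulation} has exactly two incoming and two outgoing arrows. For $2\le k\le n-1$ both words are visibly chordless (short $3$- or $4$-cycles through a single internal vertex), so the remark gives (II.) immediately. The only extra point is at the fan endpoints $k=1$ and $k=n$, where one word is the long cycle $z_2x_1x_2$ (resp. $x_1x_2z_2$); here I must check that this long cycle is still chordless, i.e. that the full subquiver on $\{2,i_1,\dots,i_{n-3},2n,1\}$ is a single cycle. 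This holds because the only arrows meeting the internal vertices $i_j$ besides the $\alpha$'s are the $\beta$'s and $\gamma$'s, and these land on the boundary vertices $4,6,\dots,2n-2$, none of which lie on the cycle. Hence (II.) follows from Remark \ref{prop:equivalent_cycle} for every $k$.

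For relation (I.) I would fold the long path $L_k:=x_{2k+1}x_{2k+2}\cdots x_{2k+2(n-2)}$, which runs from $2k$ to $2k-4$ the long way round, onto $z_{2k}z_{2k-2}$ by iterating the relations above, generalising the chain displayed in (\ref{calculation:product_x}). For the residues $k$ whose long path contains the consecutive pair $x_3x_4$, the fold is the direct analogue of Lemma \ref{lemma:generator_fan}: apply $x_3x_4\cong\alpha_0\beta_0$ to start, cascade $\beta_{j-1}x_{2j+3}x_{2j+4}\cong\alpha_j\beta_j$ down the $\gamma$'s, and close up using $y_{2n}$ or the appropriate $\alpha_j$-relation, so that every intermediate boundary letter is absorbed into the internal word $z_{2k}z_{2k-2}=\gamma_{k-2}\beta_{k-3}\gamma_{k-3}\beta_{k-4}$. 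For the few residues near the fan centre (those $k$ for which $x_3x_4$ is \emph{not} a subword of $L_k$) I would instead initiate the fold from the opposite end, converting $x_{2n-1}x_{2n}\cong\gamma_{n-3}\alpha_{n-3}$ and then using the long-face relations $\beta_j\gamma_j\cong\alpha_{j+1}\cdots x_1x_2\cdots\alpha_{j-1}$ to reach $z_{2k}z_{2k-2}$. As an organising device I would also record the commutation identity $z_{2k}x_{2k-1}x_{2k}=x_{2k+1}x_{2k+2}z_{2k+2}$ (which is just (II.)) and note that iterating it transports a $z$ once around the boundary with a controlled index shift, which helps reconcile the two forms of each intermediate word.

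The main obstacle is that the fan triangulation is not rotationally symmetric, so (I.) is \emph{not} a single orbit of one computation: the explicit fold depends on the position of the boundary segment $L_k$ relative to the fan centre $\{2n,1,2\}$, and the delicate point is to route the rewriting so that it terminates exactly at $z_{2k}z_{2k-2}$. In particular, one must avoid any step that would amount to cancelling a boundary arrow $x_i$ on the left or right, which is not legitimate in $\Lambda_{Q_F(n)}$; every manipulation must be realised as multiplication of an honest $\partial W$-relation by paths. I expect the cleanest write-up is to treat the generic range of $k$ by the uniform fold above and to dispatch the bounded number of boundary cases near $k=1$ and $k=n$ (where the $z$'s take their special forms $z_2,z_4,z_{2n}$) as separate but parallel computations, each ending in $\gamma_{k-2}\beta_{k-3}\gamma_{k-3}\beta_{k-4}$.
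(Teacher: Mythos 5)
Your overall strategy coincides with the paper's: for relation (II.) you invoke Remark \ref{prop:equivalent_cycle}, observing that both sides are chordless cycles at the vertex $2k$, which is exactly the paper's argument (your additional check that the long cycle $z_2x_1x_2$ through $\{2,i_1,\ldots,i_{n-3},2n,1\}$ is chordless at the fan endpoints is correct, and is more care than the paper itself takes); for relation (I.) you fold the long boundary path by iterating potential relations, generalising the chain (\ref{calculation:product_x}), which is what the paper does for the single instance $z_2z_{2n}=x_3\cdots x_{2n-2}$ before declaring the remaining cases analogous.

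There is, however, one concrete flaw in your toolkit. You list the cyclic derivatives with respect to $y_4$, the $\gamma_k$, $y_{2n}$ and the $\alpha_j$, and assert these are the only tools needed; but you omit the derivatives with respect to the arrows $\beta_j$, namely $y_4\alpha_0\overset{\beta_0}{\cong}x_5x_6\gamma_1$ and $\gamma_j\alpha_j\overset{\beta_j}{\cong}x_{2j+5}x_{2j+6}\gamma_{j+1}$ for $1\le j\le n-4$. These form the cascade dual to your $\gamma$-cascade and are precisely what propagates a fold inward from the $y_{2n}$-end, so without them several instances of (I.) are underivable as written. For instance, at $k=2$ one must show $z_4z_2=y_4\alpha_0\alpha_1\cdots\alpha_{n-3}=x_5x_6\cdots x_{2n}$; since $x_3x_4$ is not a subword, your prescription is to start from the opposite end via $x_{2n-1}x_{2n}\overset{y_{2n}}{\cong}\gamma_{n-3}\alpha_{n-3}$ — but after that single step none of your listed relations matches a subpath of $x_5\cdots x_{2n-2}\gamma_{n-3}\alpha_{n-3}$ (the long-face $\alpha_j$-relations require either the pattern $\beta_j\gamma_j$ or the full word $\alpha_{j+1}\cdots\alpha_{n-3}x_1x_2\alpha_0\cdots\alpha_{j-1}$, neither of which occurs), and the rewriting stalls. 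With the $\beta_j$-derivatives the chain closes immediately: $z_4z_2=y_4\alpha_0\alpha_1\cdots\alpha_{n-3}\overset{\beta_0}{\cong}x_5x_6\gamma_1\alpha_1\cdots\alpha_{n-3}\overset{\beta_1}{\cong}\cdots\overset{\beta_{n-4}}{\cong}x_5\cdots x_{2n-2}\gamma_{n-3}\alpha_{n-3}\overset{y_{2n}}{\cong}x_5\cdots x_{2n}$. The same relations are also needed in your generic case, to fold the prefix $x_{2k+1}\cdots x_{2n}$ into $\gamma_{k-2}\alpha_{k-2}\cdots\alpha_{n-3}$ before the $\alpha_{k-3}$-relation $\beta_{k-3}\gamma_{k-3}\cong\alpha_{k-2}\cdots\alpha_{n-3}x_1x_2\alpha_0\cdots\alpha_{k-4}$ can be applied to land on $z_{2k}z_{2k-2}=\gamma_{k-2}\beta_{k-3}\gamma_{k-3}\beta_{k-4}$. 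This is a repairable bookkeeping gap rather than a wrong approach: once the $\beta_j$-derivatives are added, your case analysis goes through and reproduces the paper's (much terser) proof.
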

	\begin{proof}
		By the same calculations as in (\ref{calculation:product_x}) of Lemma \ref{lemma:generator_fan}  we immediately get the equality of the paths 
		\begin{align*}
		x_3 x_4 \cdots x_{2n-3}x_{2n-2} = z_{2}y_{2n} = z_2 z_{2n}.
		\end{align*}
		All other relations of type (I.) follow from Lemma \ref{lemma:generator_fan} in the same way.
		The second kind of relation has already been stated in Remark \ref{prop:equivalent_cycle}, as both sides of the equation are short cycles $u_{2k}$ at boundary vertex $2k$.
	\end{proof}
	Thus we described the boundary algebra of the GL$_2$-dimer of a fan triangulation for arbitrary large $n$ in detail and it remains to show flip invariance in order to get the main result for $m=2$.


	\subsection{Flips in the boundary algebras}\label{subsec:flips}
	
	This section starts with the definition of a diagonal flip of a triangulation in order to show that a flip does not change the structure of the boundary algebra itself. As already shown the boundary algebra of the fan triangulation has the structure  given in Theorem \ref{th:main_theorem}. Together with the main result of this section (Theorem \ref{th:flip}), this proves that all boundary algebras arising from GL$_2$-dimers of arbitrary triangulations of an $n$-gon are isomorphic.
	\begin{defn}[Diagonal flip of a triangulation.]
		For a triangulation a diagonal flip is defined as follows. Let $(l,j)$ be a diagonal of the triangulation of the $n$-gon. Then two triangles $l$,$j$,$k$ and $l$,$j$,$i$ belong to the triangulation. A flip, as shown in Figure \ref{fig:diagonal_flip},
		\vspace{-0.6 cm}
		\begin{center}
			\includegraphics[scale=0.8]{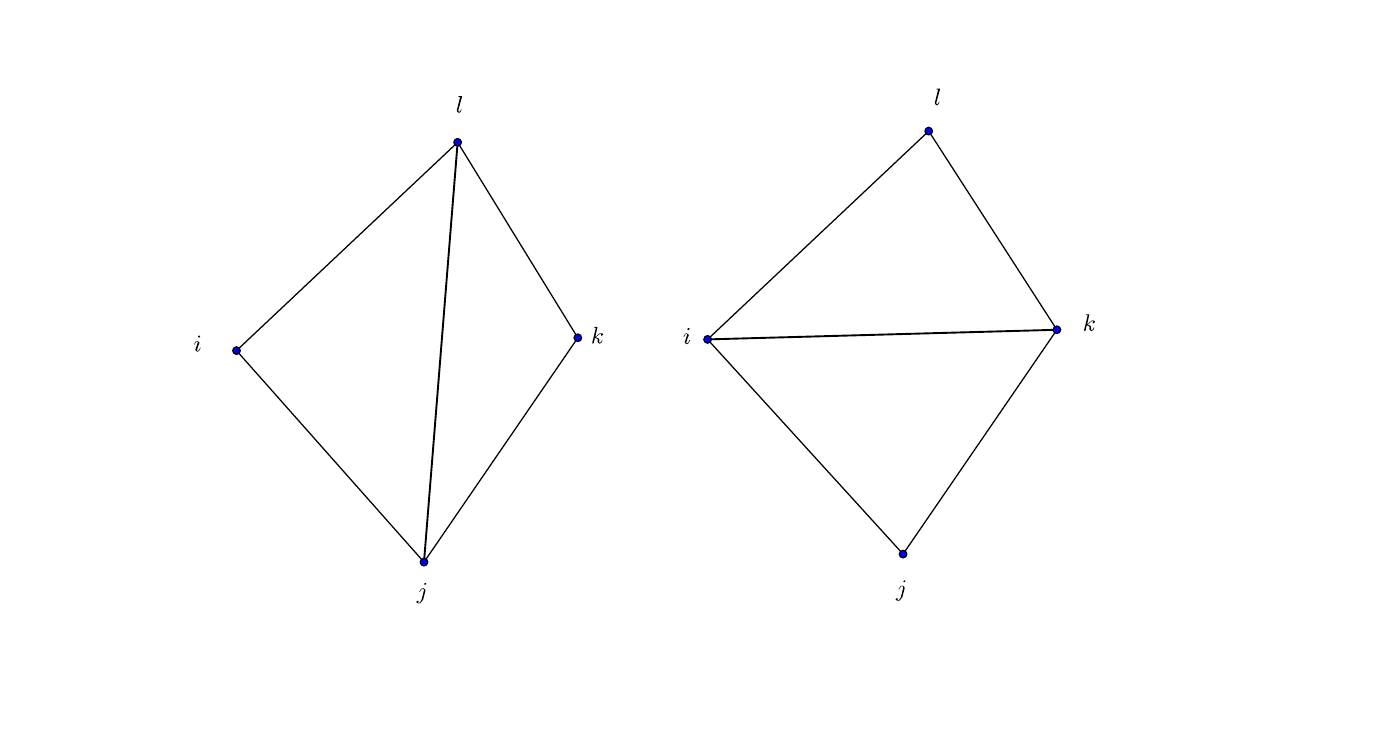}
			\vspace{-1.3 cm}
			\captionof{figure}{Diagonal flip of a triangulation.\label{fig:diagonal_flip}}
		\end{center}
		is the removal of the diagonal $(l,j)$ replacing it by the diagonal $(i,k)$. 
	\end{defn}
	
	Note that a diagonal flip is always a local operation that only changes the structure around vertices corresponding to the edges of the quadrilateral, and hence a local operation on the GL$_2$-dimer and the corresponding dimer algebra. Furthermore, every  triangulation of a polygon can be reached from any starting triangulation by application of finitely many flips, see Theorem (a) in \cite{h}. Recall that $Q_{F(n)}$ denotes the quiver of the GL$_2$-dimer of a fan triangulation of an $n$-gon with dimer algebra $\Lambda_{Q_{F(n)}}$ and $e_b$ the sum of the boundary idempotents of $\Lambda_{Q_{F(n)}}$.
	
	\begin{lm}\label{lm:flip_equivalence}
		Let $j\in [3,n-1]$ and $\mu$ be the flip of the diagonal $(1,j)$ of $Q_F(n)$. Let $e_b'$ be the sum of the boundary idempotents of $\Lambda_{\mu Q_F(n)}$. Then there is an isomorphism 
		\begin{align*}
		e_{b'} \Lambda_{\mu Q_F(n)} e_{b'} \cong e_b \Lambda_{Q_F(n)} e_b.  
		\end{align*}
	\end{lm}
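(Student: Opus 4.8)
The plan is to exploit that a diagonal flip is a \emph{local} operation on the GL$_2$-dimer: removing the diagonal $(1,j)$ and inserting $(j-1,j+1)$ only alters the two triangles forming the quadrilateral with vertices $1,j-1,j,j+1$, so in the quiver it touches only the internal vertices sitting in that region together with the $\alpha,\beta,\gamma$-arrows incident to them. In particular all $2n$ boundary vertices and, crucially, all boundary arrows $x_k\colon k-1\to k$ survive the flip verbatim, and only the two polygon edges $(j-1,j)$ and $(j,j+1)$ border the modified interior. By Lemma \ref{lemma:generator_fan} the algebra $e_b\Lambda_{Q_F(n)}e_b$ is generated by the $x_k$ together with the $z_{2k}$, and the same generation argument applies to $\mu Q_F(n)$ once we have its reduced quiver in hand. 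So I would first reduce the problem to exhibiting a correspondence of generators that respects the relations of Proposition \ref{prop:relation}.

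Concretely, I would first draw the GL$_2$-dimer of $\mu Q_F(n)$ near the flipped quadrilateral and remove the internal $2$-cycles via $\partial W$ exactly as in the proof of Proposition \ref{prop:quiver_fan_triangulation}, in order to read off the new internal arrows. Away from the flipped region the quiver is identical to $Q_F(n)$, so all generators $x_k$, and those $z_{2k}$ whose defining paths in Definition \ref{def:z_2k} do not enter the quadrilateral, are literally unchanged. Only the few $z_{2k}$ indexed near the two affected boundary edges get re-routed through the new interior. I would then define $\phi\colon e_{b'}\Lambda_{\mu Q_F(n)}e_{b'}\to e_b\Lambda_{Q_F(n)}e_b$ by $\phi(x_k)=x_k$, by $\phi(z_{2k})=z_{2k}$ for the unaffected indices, and for each affected index by the monomial built from the corresponding re-routed interior path; simultaneously I would define the reverse correspondence $\psi$ (again the identity on all $x_k$) in the opposite direction.

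To see that $\phi$ is a well-defined algebra isomorphism I would check that the images satisfy the two families of relations in Proposition \ref{prop:relation}. The cycle relations~(II.), $x_{2k+1}x_{2k+2}z_{2k+2}=z_{2k}x_{2k-1}x_{2k}$, are automatic: both sides are chordless short cycles $u_{2k}$ at the boundary vertex $2k$, and by Remark \ref{prop:equivalent_cycle} any two chordless cycles at a vertex agree modulo $\partial W$ in \emph{any} dimer model with boundary, hence in the flipped one as well. The substance is therefore the long relations~(I.), $z_{2k}z_{2k-2}=x_{2k+1}\cdots x_{2k+2\cdot(n-2)}$, for the indices whose $z$-paths traverse the flipped quadrilateral. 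For these I would verify the identity by a $\partial W$-computation in $\mu Q_F(n)$ of exactly the type used in (\ref{calculation:product_x}): repeatedly trading an $\alpha$-$\beta$ detour for the corresponding pair of boundary arrows via the relation at the internal arrow $\gamma$, now carried out along the re-routed interior. Since $\psi$ is checked to be a homomorphism by the symmetric computation and $\phi\psi$, $\psi\phi$ act as the identity on the generating sets, this yields that $\phi$ is an isomorphism without any dimension count.

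I expect the main obstacle to be precisely this local bookkeeping for relation~(I.): one must pin down how many and which $z_{2k}$ are re-routed by the flip of $(1,j)$, track the new $\alpha,\beta,\gamma$-arrows together with the $2$-cycle reductions carefully, and confirm that the re-routed products collapse to the same monomials $x_{2k+1}\cdots x_{2k+2\cdot(n-2)}$ as before. Everything else---locality of the flip, invariance of the $x_k$, and the automatic relation~(II.)---is structural, so the entire weight of the argument rests on showing that this finite, flip-localized diagram chase reproduces the relations of $\Gamma(n)$ unchanged.
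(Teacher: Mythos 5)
Your proposal is correct and follows the paper's own proof essentially step for step: the paper likewise keeps all $x_k$ fixed, re-routes only the $z_{2k}$ near the flipped quadrilateral (explicitly $z_2'=\delta_1\gamma'\delta_2$, $z_{2j}'=\gamma''$, $z_{2j\pm 2}'$ through the new interior arrows), gets relation (II.) for free from Remark \ref{prop:equivalent_cycle}, and verifies the affected instances of relation (I.) by $\partial W$-reductions that eliminate the new quadrilateral arrows and then invoke Proposition \ref{prop:relation}. The one step the paper carries out in detail that you only gesture at (``the same generation argument applies'') is the claim that every path from $2$ to $2n$ in the flipped algebra factors through the re-routed $z_2'$, proved by ruling out each of the new arrows $\alpha',\beta',\alpha'',\beta'',\gamma''$ as possible constituents of a generator; this is exactly the ``flip-localized bookkeeping'' you correctly identify as carrying the weight of the argument.
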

	\begin{proof}
		By Lemma \ref{lemma:generator_fan} the  $x_i$, $i=1,\ldots,2n$ together with the $z_{2k}$, $k=1,\ldots,n$ generate the boundary algebra $\mathcal{B}(Q_F(n))$ and these elements fulfil relations (I.) and (II.) by Proposition \ref{prop:relation}.\\
		We consider the flip $\mu$ of the diagonal $(1,j)$ of the $n$-gon and the new dimer algebra $\Lambda_{\mu Q(F)}$, the relevant part is shown in Figure \ref{fig:general_flip_fan}.
		\begin{center}
			\includegraphics[scale=0.85]{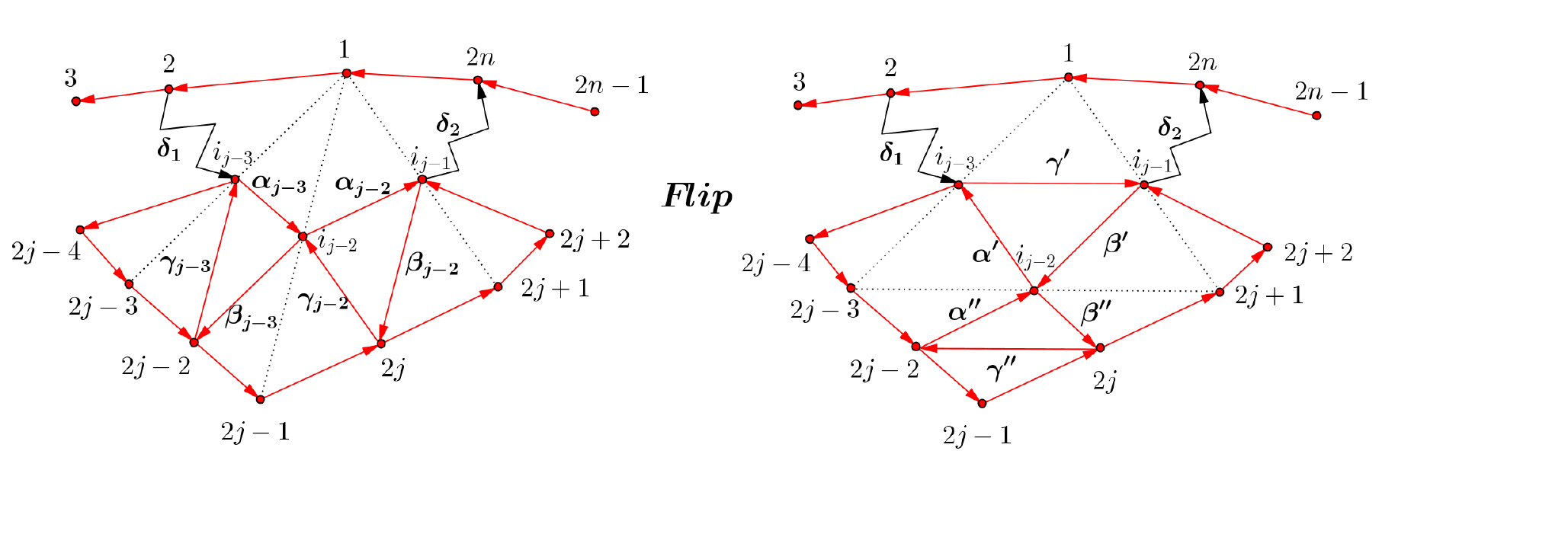}
			\vspace{-1.5 cm}
			\captionof{figure}{Flip of the diagonal $(1,j)$  changes the dimer algebra to $\Lambda_{\mu Q_F(n)}$ .\label{fig:general_flip_fan}}
		\end{center}
		Here the paths $\delta_1$ and $\delta_2$ are
		\begin{align*}
		\delta_1 &= \left\{ \begin{matrix} \alpha_0 \cdots \alpha_{j-4} && 4\leq j\leq n-1 \\
		e_2 						&& j=3 \end{matrix} \right. \\
		\delta_2 &= \left\{ \begin{matrix} \alpha_{j-1} \cdots \alpha_{n-3} && 3\leq j\leq n-2 \\
		e_{2n} 						  && j=n-1 \end{matrix} \right.
		\end{align*}
		and hence might be empty. Furthermore, if $j=3$, then $\gamma_{j-3}=y_4$, $\beta_{j-4}=e_2$, $i_{j-3}=2$ and if $j=n-1$, then $\gamma_{j-1}=e_{2n}$, $\beta_{j-2}=y_{2n}$ and $i_{j-1}=2n$.\\ 
		We know that every path of $\mathcal{B}(Q_F(n))$ from $2$ to $2n$ factors through $z_2=\delta_1 \alpha_{j-3} \alpha_{j-2} \delta_2$.
		The boundary algebra $e_{b'} \Lambda_{Q_F(n)} e_{b'}=:\mathcal{B}'$ has generators in terms of Lemma \ref{lemma:generator_fan}, which we now want to describe in detail.
		First notice that $x_k' :=x_k$ for $k \in[1,2n]$ are also generators  in $\mathcal{B}'$, because a diagonal flip does not change the boundary. \\
		\textit{Claim:} Every path from $2$ to $2n$ in $\mathcal{B}'$ factors through $z_2'$, with
		\begin{align}\label{eq:new_basis_element_z}
		z_2' := \delta_1 \gamma' \delta_2 .
		\end{align}
		\textit{Proof of the Claim.} We can assume w.l.o.g. that $z_2'$ does not contain a cycle (otherwise, by removing the cycle, every path from $2$ to $2n$ would still factor through it). Furthermore, because the flip does not change the rest of the arrows (apart from the internal arrows $\alpha_{j-3}$, $\alpha_{j-2}$, $\beta_{j-3}$, $\beta_{j-2}$, $\gamma_{j-3}$ and $\gamma_{j-2}$), every path from $2$ to $2n$ still factors through $\delta_1$ and $\delta_2$ in $\mathcal{B}'$. \\
		The generator $z_2'$ for paths from $2$ to $2n$ must contain at least one arrow of the new quadrilateral, because otherwise this generator would already have existed in the original dimer algebra, a contradiction to the fact that all paths from $2$ to $2n$ factor through  $z_2=\delta_1 \alpha_1 \beta_1 \delta_2$ in $\mathcal{B}(Q_F(n))$.\\
		The arrows $\alpha'$ or $\beta'$ immediately lead to a cycle at $i_{j-3}$ or $i_{j-1}$ respectively, so they can't be part of the generator $z_2'$. As $\beta'$ is not part of $z_2'$, if $\gamma'$ is part of the generator, it has to be the only arrow of the new quadrilateral.
		
		Assume now that $\gamma'$ is not part of $z_2'$. Then at least one of the arrows $\alpha''$,  $\beta''$ or $\gamma''$ has to be part of the generator $z_2'$.\\
		If $\alpha''$ was part of it, then either $\alpha'$ or $\beta''$ were also part of the generator. The former is a contradiction as mentioned above, the latter to $z_2'$ using an arrow of the new quadrilateral, as  
		\begin{align*}
		\alpha'' \beta'' \overset{\gamma''}{\cong} x_{2j-1}'x_{2j}'.
		\end{align*}
		The other cases lead to contradictions similarly. \\
		Analogously, we can define
		\begin{align*}
		z_{2j+2}'&:= \gamma_{j-1} \beta' \beta''\\
		z_{2j}'&:= \gamma''\\
		z_{2j-2}'&:= \alpha'' \alpha' \beta_{j-4}.
		\end{align*}
		Furthermore, as every path, which does not contain arrows of the new quadrilateral remains unchanged, we can define $z_{2k}' :=z_{2k}$ for all $k\in[1,n] \setminus \{1,j-1,j,j+1\}$. Then the set $\left \{ \{x_i'\}_{1\leq i \leq 2n}, \{z_{2j}' \}_{1\leq j \leq n} \right \}$ generates $\mathcal{B}'$ in a minimal way  as in Remark after Lemma \ref{lemma:generator_fan}.\\
		It remains to show, that the relations are fulfilled in $\mathcal{B}'$. 
		Of course, the relations $x_{2k+1}' x_{2k+2}'z_{2k+2}'=z_{2k}'x_{2k-1}'x_{2k}'$ hold in $\mathcal{B}'$ by Remark \ref{prop:equivalent_cycle}. \\
		We have to check the relations
		\begin{align*}
		z_{2i}'z_{2i-2}'  =  x_{2i+1}'x_{2i+2}' \ldots x_{2i+2\cdot(n-2)}'.
		\end{align*}
		for all paths involving arrows of the new quadrilateral.
		Let $i=j$, then
		\begin{align*}
		z_{2j}'z_{2j-2}'  \overset{\beta''}{\cong} x_{2j+1}'x_{2j+2}' \gamma_{j-1}\beta'\alpha' \beta_{j-4}  \overset{\gamma'}{\cong}  x_{2j+1}'x_{2j+2}' \delta_2 x_1' x_2' \delta_1 \beta_{j-4} = x_{2j+1}x_{2j+2} \delta_2 x_1 x_2 \delta_1 \beta_{j-4} .
		\end{align*}
		This last path does not contain an arrow of the new quadrilateral, hence we can apply Proposition \ref{prop:relation} and get the desired result:
		\begin{align*}
		x_{2j+1}x_{2j+2} \delta_2 x_1 x_2 \delta_1 \beta_{j-4} \overset{\ref{prop:relation}}{\cong} x_{2j+1}x_{2j+2} \ldots x_{2j-4} = x_{2j+1}'x_{2j+2}' \ldots x_{2j-4}'.
		\end{align*}
		All further relations involving arrows of the quadrilateral follow analogously. 
	\end{proof}
	A direct consequence of the proof of Lemma \ref{lm:flip_equivalence} is:
	\begin{cor}
		The isomorphism of Lemma \ref{lm:flip_equivalence} is induced by
		\begin{align}
		x_k \mapsto x_k' \mbox{ for $k \in [1,2n]$} \\
		z_{2k} \mapsto z_{2k}' \mbox{ for $k \in [1,n]$}.
		\end{align}
	\end{cor}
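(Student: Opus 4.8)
The plan is to recognise that the isomorphism produced in the proof of Lemma~\ref{lm:flip_equivalence} was in fact built generator by generator, so that the corollary amounts to reading off that construction. First I would recall the two ingredients already in place. By Lemma~\ref{lemma:generator_fan} the algebra $\mathcal{B}(Q_F(n)) = e_b \Lambda_{Q_F(n)} e_b$ is generated by the $x_i$, $1\le i\le 2n$, together with the $z_{2j}$, $1\le j\le n$, and by Proposition~\ref{prop:relation} these generators satisfy relations (I.) and (II.). Moreover, as recorded in the previous subsection, these relations are complete: $\mathcal{B}(Q_F(n))$ is exactly the path algebra of $\Gamma(n)$ modulo (I.) and (II.). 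The proof of Lemma~\ref{lm:flip_equivalence} established the exactly parallel statement for the flipped algebra $\mathcal{B}' = e_{b'}\Lambda_{\mu Q_F(n)} e_{b'}$: the elements $x_k' = x_k$ and the explicitly defined $z_{2k}'$ form a minimal generating set satisfying the same relations (I.) and (II.), and (by Lemma~\ref{lm:flip_equivalence} itself together with the fan case) $\mathcal{B}'$ is likewise the path algebra of $\Gamma(n)$ modulo those relations.

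Granting this, the assignment
\[
x_k \mapsto x_k' \ \ (k\in[1,2n]), \qquad z_{2k}\mapsto z_{2k}' \ \ (k\in[1,n])
\]
respects every defining relation of the source and therefore extends uniquely to an algebra homomorphism $\phi\colon \mathcal{B}(Q_F(n))\to\mathcal{B}'$; it is surjective because its image contains all generators of $\mathcal{B}'$. Running the identical argument with the two algebras interchanged yields a homomorphism $\psi\colon\mathcal{B}'\to\mathcal{B}(Q_F(n))$ sending $x_k'\mapsto x_k$ and $z_{2k}'\mapsto z_{2k}$. Since $\phi$ and $\psi$ invert each other on the respective generating sets, the composites $\phi\circ\psi$ and $\psi\circ\phi$ fix all generators and are hence the identity maps, so $\phi$ is an isomorphism. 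As the isomorphism asserted in Lemma~\ref{lm:flip_equivalence} was obtained by precisely this generator correspondence, it coincides with $\phi$, which is the content of the corollary.

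The step demanding the most care is verifying that the correspondence is \emph{index-compatible}. Under the flip the roles of several internal arrows are redistributed (recall $z_{2j}' = \gamma''$, $z_{2j+2}' = \gamma_{j-1}\beta'\beta''$ and $z_{2j-2}' = \alpha''\alpha'\beta_{j-4}$), so one must check that the relation of type (I.) or (II.) carrying index $k$ on the source side is sent to the relation with the same index $k$ on the target side, and not to a shifted one. This is exactly the computation performed for $i=j$ in the proof of Lemma~\ref{lm:flip_equivalence}, together with its stated analogues; it is what simultaneously guarantees that $\phi$ is well defined and that $\psi$ is a genuine two-sided inverse. Everything else is bookkeeping.
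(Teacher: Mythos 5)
Your proposal is correct and takes essentially the same route as the paper: the paper offers no separate proof, declaring the corollary a direct consequence of the proof of Lemma~\ref{lm:flip_equivalence}, in which the isomorphism was constructed precisely as the generator-by-generator correspondence $x_k \mapsto x_k'$, $z_{2k} \mapsto z_{2k}'$ after verifying (via Lemma~\ref{lemma:generator_fan} and Proposition~\ref{prop:relation}) that both sets of generators satisfy relations (I.) and (II.). Your explicit universal-property argument and the symmetric construction of the inverse simply spell out what the paper leaves implicit, and your closing remark on index-compatibility correctly identifies the one computation (the $i=j$ case in the lemma's proof) on which the identification rests.
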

	
	\begin{theorem}\label{th:flip}
		Let $Q$ be the quiver of the GL$_2$-dimer of an arbitrary triangulation of the $n$-gon, with dimer algebra $\Lambda_{Q}$ and $e_{b'}$ the sum of the boundary idempotents of $\Lambda_{Q}$.
		Then there is an isomorphism
		\begin{align*}
		e_{b'} \Lambda_{Q} e_{b'} \cong e_b \Lambda_{Q_F(n)} e_b.  
		\end{align*}
	\end{theorem}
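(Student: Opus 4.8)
The plan is to reduce the general case to the fan case by exploiting the connectivity of the flip graph, proceeding by induction on the number of flips. By Theorem (a) in \cite{h}, any triangulation $T$ of the $n$-gon can be joined to the fan triangulation by a finite sequence of diagonal flips $T_F = T_0, T_1, \ldots, T_r = T$, where each $T_{s+1}$ arises from $T_s$ by flipping a single diagonal. Since algebra isomorphism is transitive, it suffices to establish flip-invariance of the boundary algebra one step at a time: if $\mathcal{B}_{Q_{T_s}}$ carries the standard generators $\{x_i\}_{1\leq i\leq 2n}$, $\{z_{2k}\}_{1\leq k\leq n}$ satisfying relations (I.) and (II.) (so that $\mathcal{B}_{Q_{T_s}} \cong \mathcal{B}$), then the same holds for $\mathcal{B}_{Q_{T_{s+1}}}$. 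The base case $s=0$ is exactly the content of Proposition \ref{prop:quiver_fan_triangulation}, Lemma \ref{lemma:generator_fan} and Proposition \ref{prop:relation}, which together identify $\mathcal{B}_{Q_F(n)}$ with $\mathcal{B}$.

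For the inductive step I would re-examine the proof of Lemma \ref{lm:flip_equivalence} and observe that it is entirely local: a flip only replaces the internal arrows inside the quadrilateral formed by the two triangles adjacent to the flipped diagonal, while leaving the $2n$ boundary arrows $x_1, \ldots, x_{2n}$ and the paths $\delta_1, \delta_2$ entering and leaving that quadrilateral untouched. Consequently, assuming by induction that $T_s$ already carries the standard generators satisfying (I.) and (II.), the verbatim computation of Lemma \ref{lm:flip_equivalence} produces the modified generators $z'_{2k}$ attached to the boundary vertices bordering the flipped quadrilateral, shows that every path through the affected region factors through them, and checks that relations (I.) and (II.) persist (using, as there, the equivalences $\alpha''\beta''\overset{\gamma''}{\cong}x'_{2j-1}x'_{2j}$ and the reduction via Proposition \ref{prop:relation} of the path that no longer meets the new quadrilateral). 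The assignment $x_k \mapsto x_k'$, $z_{2k}\mapsto z_{2k}'$ then furnishes the isomorphism $\mathcal{B}_{Q_{T_s}} \cong \mathcal{B}_{Q_{T_{s+1}}}$, exactly as in the Corollary following Lemma \ref{lm:flip_equivalence}.

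The main obstacle is to make the word \emph{local} rigorous, that is, to verify that around \emph{any} diagonal of \emph{any} triangulation the reduced GL$_2$-dimer exhibits the same quadrilateral quiver (with arrows $\alpha', \beta', \gamma', \alpha'', \beta'', \gamma''$ before the flip and their flipped counterparts afterwards) that appears in Figure \ref{fig:general_flip_fan} for the fan. This must be read off directly from the construction of the GL$_2$-dimer: the two triangles sharing the diagonal, together with the removal of the resulting internal $2$-cycles, determine the local quiver independently of how the remaining triangles are glued on, so the picture is always the standard one and $\delta_1, \delta_2$ merely record the (flip-invariant) attachment to the rest of the quiver. Once this locality is secured, composing the $r$ isomorphisms along the flip sequence gives $\mathcal{B}_{Q_T} \cong \mathcal{B}_{Q_F(n)}$, which is precisely the assertion of Theorem \ref{th:flip}; since the argument applies to every triangulation, the boundary algebras of any two GL$_2$-dimers on the $n$-gon are isomorphic.
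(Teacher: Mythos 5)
Your proposal is correct and follows essentially the same route as the paper: induction on the length of a flip sequence (via Hatcher's connectivity result), with the inductive step re-running the local computation of Lemma \ref{lm:flip_equivalence} around the flipped quadrilateral, exactly as the paper does via Figure \ref{fig:general_flip_dimer_algebra} with the possibly different attaching paths $\delta_1,\ldots,\delta_8$. If anything, your explicit strengthening of the induction hypothesis --- carrying the standard generators satisfying (I.) and (II.) through each step rather than a bare isomorphism --- makes precise what the paper's phrase ``the arguments for checking do not change'' tacitly relies on.
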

	\begin{proof}
		We prove the claim by induction over the number of flips.\\
		The induction basis is Lemma \ref{lm:flip_equivalence}.\\
		For the induction step, we use that we can reach any triangulation of an $n$-gon by a finite number of diagonal flips. So let $Q$ be the quiver of an arbitrary triangulation and
		\begin{align*}
		Q = \mu_t \mu_{t-1} \ldots \mu_1 Q_F(n),
		\end{align*}
		where $\mu_1,\ldots,\mu_t$ are $t$ flips of diagonals (writing flips from right to left).
		
		By the induction hypothesis, we know that
		\begin{align*}
		\mathcal{B} (\mu_{t-1} \ldots \mu_1 Q_F(n)) \cong \mathcal{B}(Q_F(n)).
		\end{align*} 
		The induction step follows in a similar way as in the proof of Lemma \ref{lm:flip_equivalence}. Note that Figure \ref{fig:general_flip_dimer_algebra} illustrates the effect of an arbitrary flip, the paths $\delta_1,\ldots,\delta_8$ from and to the quadrilateral involved may differ from those in Lemma \ref{lm:flip_equivalence} in general. However, the arguments for checking do not change. Hence we get the  desired result,
		\begin{align*}
		\mathcal{B}(\mu_{t} \mu_{t-1} \ldots \mu_1 Q_F(n)) \cong \mathcal{B} (\mu_{t-1} \ldots \mu_1 Q_F(n)).
		\end{align*}
		
		\begin{center}
			\includegraphics[scale=0.85]{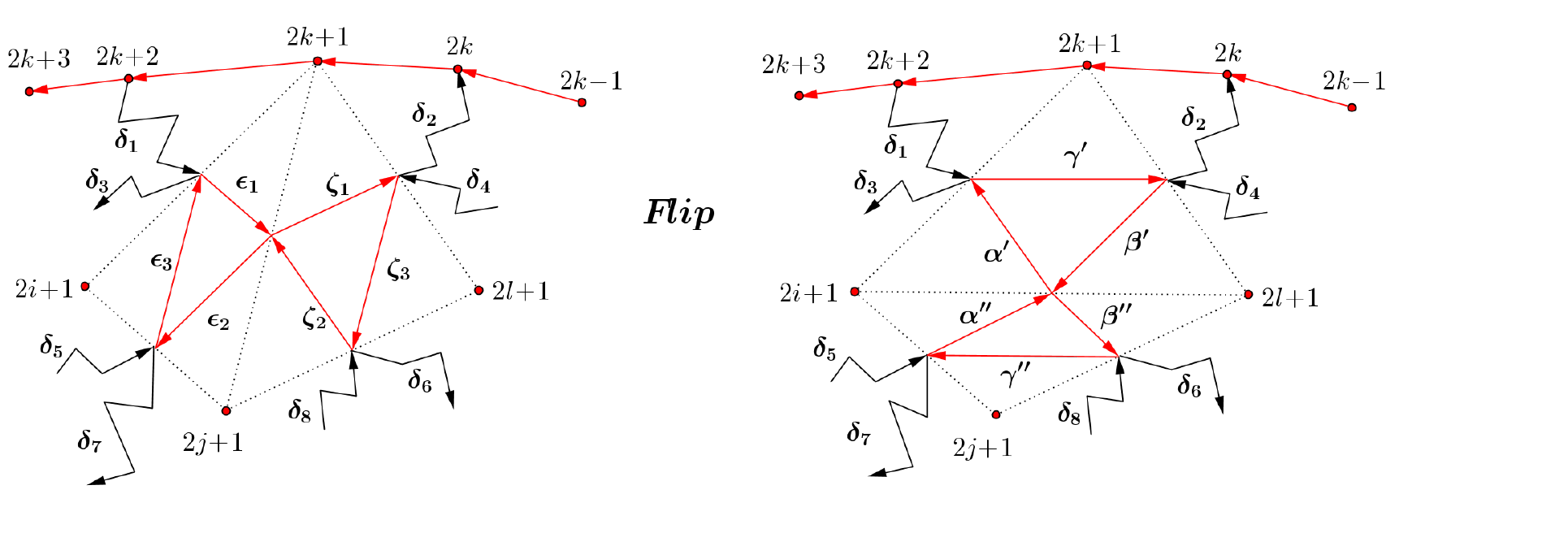}
			\vspace{-1 cm}
			\captionof{figure}{Effect of an arbitrary flip on arrows of type $z_{2k}$.\label{fig:general_flip_dimer_algebra}}
		\end{center} 
	\end{proof}
	\begin{cor}\label{cor:central_element}
		Consider the boundary algebra $\mathcal{B}_Q$ of a dimer model $Q$ of a GL$_2$-dimer of an arbitrary triangulation of the $n$-gon. 
		Then the element $t$,
		\begin{align*}
		t := \sum_{i=1}^{n}{x_{2i-1}x_{2i}z_{2i}}+ \sum_{i=1}^{n}{x_{2i}z_{2i}x_{2i-1}},
		\end{align*}
		is a central element of this algebra.
	\end{cor}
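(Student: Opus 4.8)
The plan is to first rewrite $t$ as the sum of the short cycles $u_v$ over all $2n$ boundary vertices, and then reduce centrality to a single local identity attached to each arrow. By Theorem \ref{th:flip} every boundary algebra $\mathcal{B}_Q$ is isomorphic to the model algebra $e_b\Lambda_{Q_F(n)}e_b$, with the isomorphism carrying generators to generators; since $t$ is a polynomial in the $x_k$ and $z_{2k}$, it is enough to prove that $t$ is central in the abstract algebra $\Gamma(n)$ subject to relations (I.) and (II.) of Proposition \ref{prop:relation}. First I would observe, using the short cycle $u_i$ of Definition \ref{def:short_cycle} together with relation (II.), that the $i$-th summand $x_{2i-1}x_{2i}z_{2i}$ is exactly the short cycle $u_{2i-2}$ at the even vertex $2i-2$, whereas $x_{2i}z_{2i}x_{2i-1}$ is the short cycle $u_{2i-1}$ at the odd vertex $2i-1$. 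As $i$ runs over $1,\dots,n$ these indices sweep out all even and all odd vertices respectively, so $t=\sum_{v\in Q_0}u_v$, the sum of the short cycles over all boundary vertices.

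Next I would use that $\mathcal{B}_Q$ is generated as an algebra by the idempotents $e_v$ together with the arrows $x_k$ and $z_{2k}$ (Lemma \ref{lemma:generator_fan}), so it suffices to check that $t$ commutes with each of these. Because each $u_v=e_v u_v e_v$ is supported at the single vertex $v$, the element $t$ is ``diagonal'' and commutes with every idempotent automatically. For an arrow $a\colon v\to w$ the same support argument collapses the products to $ta=u_v\,a$ and $at=a\,u_w$; thus centrality is equivalent to the family of local identities $u_v\,a=a\,u_w$, one for each arrow.

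Finally I would verify these local identities directly from relation (II.). For $a=x_k$ the key point is that relation (II.) provides two presentations of each even short cycle, namely $u_{2k}=z_{2k}x_{2k-1}x_{2k}=x_{2k+1}x_{2k+2}z_{2k+2}$; choosing the appropriate presentation on each side makes $u_v\,x_k$ and $x_k\,u_w$ literally the same path, as both are cyclic rotations of the single cyclic word through $x_k$. For $a=z_{2k}$ the same mechanism applies: writing $u_{2k}=z_{2k}x_{2k-1}x_{2k}$ and $u_{2k-2}=x_{2k-1}x_{2k}z_{2k}$ (again relation (II.)), both $u_{2k}z_{2k}$ and $z_{2k}u_{2k-2}$ reduce to $z_{2k}x_{2k-1}x_{2k}z_{2k}$, so they agree. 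I expect the only real care to be bookkeeping: getting the index shifts modulo $2n$ right, treating the wrap-around summands (for instance $x_1x_2z_2=u_{2n}$), and selecting the correct one of the two forms of each even short cycle so that the two sides collapse to the same word. Since there are no further generators to test, this establishes $ta=at$ for all $a$ and hence the centrality of $t$.
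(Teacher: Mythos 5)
Your proposal is correct and follows the same route as the paper: the paper's proof is precisely the observation that $t$ is the sum of exactly one chordless cycle at each boundary vertex, whence it commutes with everything. You merely make explicit the verification the paper leaves implicit (the support argument reducing centrality to the local identities $u_v\,a = a\,u_w$, checked on the generators $x_k$ and $z_{2k}$ via relation (II.) and Remark \ref{prop:equivalent_cycle}), and your index bookkeeping is accurate.
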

	\begin{proof}
		The element $t$ is the sum of exactly one chordless cycle for every boundary vertex and hence commutes with every element of $\mathcal{B}_Q$.
	\end{proof}


	\section{The general case}
	In this section, we describe the boundary algebras for arbitrary $m$.
	
	Starting with the GL$_m$-dimer as defined in Section \ref{sec:gen_set}, we can reduce the dimer and achieve the quiver of an $n$-gon equivalently as for $m=2$ for arbitrary $m$. From now on, we will assume that the dimer (and hence the quiver) is always reduced.
	
	Figure \ref{fig:quiver_gl_5_quadrilateral} shows the quiver of the GL$_5$-dimer of a triangulation of the quadrilateral. This example already shows, that there is a new type of generators arising for describing the boundary algebra: Let's have an informal look at boundary vertex $10$. In contrast to the case where $m=2$, there are not only paths to $11$ and $9$ but also an additional path to $2$ which cannot be reduced by any of the relations. Hence we need a third type of generators. 
	\begin{figure}[!htb]
		\begin{center}
			\begin{tikzpicture}[scale=1.2]
			\node(1)[label=above: $1$] at (5,5){};
			\node(2)[label=above: $2$] at (4,5){};
			\node(3)[label=above: $3$] at (3, 5){};
			\node(4)[label=above: $4$] at (2, 5){};
			\node(5)[label=above: $5$] at (1, 5){};
			\node(6)[label=above: $6$] at (0, 5){};
			\node(7)[label=left: $7$] at (0, 4){};
			\node(8)[label=left: $8$] at (0, 3){};
			\node(9)[label=left: $9$] at (0, 2){};
			\node(10)[label=left: $10$] at (0, 1){};
			\node(11)[label=left: $11$] at (0, 0){};
			\node(12)[label=below: $12$] at (1,0){};
			\node(13)[label=below: $13$] at (2,0){};
			\node(14)[label=below: $14$] at (3, 0){};
			\node(15)[label=below: $15$] at (4, 0){};
			\node(16)[label=below: $16$] at (5, 0){};
			\node(17)[label=right: $17$] at (5, 1){};
			\node(18)[label=right: $18$] at (5, 2){};
			\node(19)[label=right: $19$] at (5, 3){};
			\node(20)[label=right: $20$] at (5, 4){};
			\node(i1) at (1, 4){};
			\node(i2) at (2, 4){};
			\node(i3) at (3, 4){};
			\node(i4) at (4, 4){};
			\node(i5) at (1, 3){};
			\node(i6) at (2, 3){};
			\node(i7) at (3, 3){};
			\node(i8) at (4, 3){};
			\node(i9) at (1, 2){};
			\node(i10) at (2, 2){};
			\node(i11) at (3, 2){};
			\node(i12) at (4, 2){};
			\node(i13) at (1, 1){};
			\node(i14) at (2, 1){};
			\node(i15) at (3, 1){};
			\node(i16) at (4, 1){};
			
			\path[->] (1) edge (2);
			\path[->] (2) edge (3);
			\path[->] (3) edge (4);
			\path[->] (4) edge (5);
			\path[->] (5) edge (6);
			\path[->] (6) edge (7);
			\path[->] (7) edge (8);
			\path[->] (8) edge (9);
			\path[->] (9) edge (10);
			\path[->] (10) edge (11);
			\path[->] (11) edge (12);
			\path[->] (12) edge (13);
			\path[->] (13) edge (14);
			\path[->] (14) edge (15);
			\path[->] (15) edge (16);
			\path[->] (16) edge (17);
			\path[->] (17) edge (18);
			\path[->] (18) edge (19);
			\path[->] (19) edge (20);
			\path[->] (20) edge (1);
			
			\path[->] (7) edge (5);
			\path[->] (8) edge (i1);
			\path[->] (i1) edge (4);
			\path[->] (9) edge (i5);
			\path[->] (i5) edge (i2);
			\path[->] (i2) edge (3);
			\path[->] (10) edge (i9);
			\path[->] (i9) edge (i6);
			\path[->] (i6) edge (i3);
			\path[->] (i3) edge (2);
			\path[->] (20) edge (i8);
			\path[->] (i8) edge (i11);
			\path[->] (i11) edge (i14);
			\path[->] (i14) edge (12);
			\path[->] (19) edge (i12);
			\path[->] (i12) edge (i15);
			\path[->] (i15) edge (13);
			\path[->] (18) edge (i16);
			\path[->] (i16) edge (14);
			\path[->] (17) edge (15);
			
			\path[->] (2) edge (i4);
			\path[->] (i4) edge (20);
			\path[->] (3) edge (i3);
			\path[->] (i3) edge (i7);
			\path[->] (i7) edge (i8);
			\path[->] (i8) edge (19);
			\path[->] (4) edge (i2);
			\path[->] (i2) edge (i6);
			\path[->] (i6) edge (i10);
			\path[->] (i10) edge (i11);
			\path[->] (i11) edge (i12);
			\path[->] (i12) edge (18);
			\path[->] (5) edge (i1);
			\path[->] (i1) edge (i5);
			\path[->] (i5) edge (i9);
			\path[->] (i9) edge (i13);
			\path[->] (i13) edge (i14);
			\path[->] (i14) edge (i15);
			\path[->] (i15) edge (i16);
			\path[->] (i16) edge (17);
			
			\path[->] (12) edge (i13);
			\path[->] (i13) edge (10);
			\path[->] (13) edge (i14);
			\path[->] (i14) edge (i10);
			\path[->] (i10) edge (i9);
			\path[->] (i9) edge (9);
			\path[->] (14) edge (i15);
			\path[->] (i15) edge (i11);
			\path[->] (i11) edge (i7);
			\path[->] (i7) edge (i6);
			\path[->] (i6) edge (i5);
			\path[->] (i5) edge (8);
			\path[->] (15) edge (i16);
			\path[->] (i16) edge (i12);
			\path[->] (i12) edge (i8);
			\path[->] (i8) edge (i4);
			\path[->] (i4) edge (i3);
			\path[->] (i3) edge (i2);
			\path[->] (i2) edge (i1);
			\path[->] (i1) edge (7);
			
			\draw[fill=black] (1) circle (1pt);
			\draw[fill=black] (2) circle (1pt);
			\draw[fill=black] (3) circle (1pt);
			\draw[fill=black] (4) circle (1pt);
			\draw[fill=black] (5) circle (1pt);
			\draw[fill=black] (6) circle (1pt);
			\draw[fill=black] (7) circle (1pt);
			\draw[fill=black] (8) circle (1pt);
			\draw[fill=black] (9) circle (1pt);
			\draw[fill=black] (10) circle (1pt);
			\draw[fill=black] (11) circle (1pt);
			\draw[fill=black] (12) circle (1pt);
			\draw[fill=black] (13) circle (1pt);
			\draw[fill=black] (14) circle (1pt);
			\draw[fill=black] (15) circle (1pt);
			\draw[fill=black] (16) circle (1pt);
			\draw[fill=black] (17) circle (1pt);
			\draw[fill=black] (18) circle (1pt);
			\draw[fill=black] (19) circle (1pt);
			\draw[fill=black] (20) circle (1pt);
			\draw[fill=black] (i1) circle (1pt);
			\draw[fill=black] (i2) circle (1pt);
			\draw[fill=black] (i3) circle (1pt);
			\draw[fill=black] (i4) circle (1pt);
			\draw[fill=black] (i5) circle (1pt);
			\draw[fill=black] (i6) circle (1pt);
			\draw[fill=black] (i7) circle (1pt);
			\draw[fill=black] (i8) circle (1pt);
			\draw[fill=black] (i9) circle (1pt);
			\draw[fill=black] (i10) circle (1pt);
			\draw[fill=black] (i11) circle (1pt);
			\draw[fill=black] (i12) circle (1pt);
			\draw[fill=black] (i13) circle (1pt);
			\draw[fill=black] (i14) circle (1pt);
			\draw[fill=black] (i15) circle (1pt);
			\draw[fill=black] (i16) circle (1pt);
			\end{tikzpicture}
			\vspace{-0.32 cm}
			\caption{Reduced quiver of the GL$_5$-dimer of a quadrilateral with diagonal incident with $1$. \label{fig:quiver_gl_5_quadrilateral}}
		\end{center}
	\end{figure}
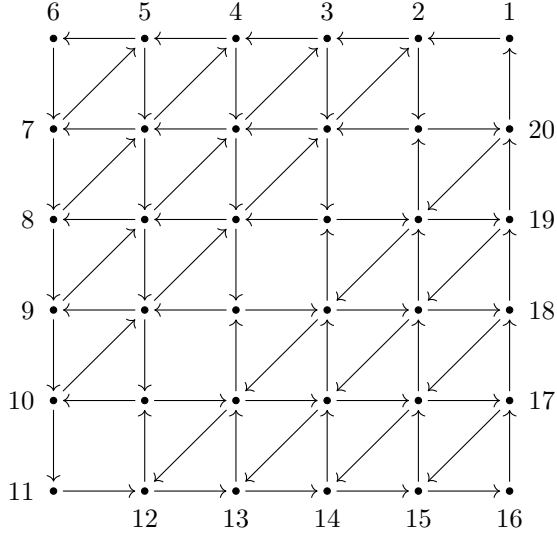
	In order to introduce a formal notation for the generators, we have a closer look at the internal vertices. We can give a formula for the number of internal vertices, depending on $m$ and $n$.
	\begin{defn}[polygonal numbers of second order] For a polygon with $s$ vertices, the $k^{th}$ $s$-gonal number of second order is
		\begin{align*}
		P_2(s,k)= \frac{k^2\cdot (s-2) + k \cdot (s-4)}{2}.
		\end{align*}
	\end{defn}
	\begin{rem}
		Using the setting as in definition above, the polygonal number (of first order) is defined as $P(s,k)= \frac{k^2\cdot (s-2) - k \cdot (s-4)}{2}$, see A057145 in the OEIS \cite{oeis}.
	\end{rem}
	
	\begin{prop}
		The number of internal vertices of the dimer model of the GL$_m$-dimer of the fan triangulation of the $n$-gon is $P_2(n,m-1)$ for $n>3$.
	\end{prop}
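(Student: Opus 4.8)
The plan is to reduce the count to elementary lattice-point bookkeeping via the dimer--quiver duality. Recall that the vertices of the quiver $Q$ are exactly the connected regions of the complement of the (reduced) GL$_m$-dimer $\Gamma$, that its arrows are the edges of $\Gamma$, and that its faces are the vertices of $\Gamma$. The heart of the argument is the claim that, after removing all $2$-cycles, the regions of $\Gamma$ are in natural bijection with the lattice points of the triangular subdivision of the triangulation (the $m$-dissection points of each triangle), in such a way that a region touches the boundary of the $n$-gon if and only if the corresponding lattice point lies on the boundary. Granting this, the internal quiver vertices correspond precisely to the lattice points lying in the interior of the $n$-gon.

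With the bijection in hand the count is immediate. A lattice point lies in the interior of the $n$-gon exactly when it lies in the interior of one of the $n-3$ diagonals or in the interior of one of the $n-2$ triangles. Each diagonal carries $m-1$ interior lattice points, and each triangle, subdivided into $m^2$ small triangles, carries $\binom{m-1}{2}$ interior lattice points. Hence the number of internal vertices is
\[
(n-3)(m-1) + (n-2)\binom{m-1}{2} = \frac{(m-1)\big(2(n-3) + (n-2)(m-2)\big)}{2} = \frac{(m-1)\big(m(n-2)-2\big)}{2},
\]
which is exactly $P_2(n,m-1)=\tfrac{(m-1)^2(n-2)+(m-1)(n-4)}{2}$. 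Note that this argument uses only that every triangulation of the $n$-gon has $n-3$ diagonals and $n-2$ triangles, so the same count in fact holds for an arbitrary triangulation, in agreement with the flip-invariance results of the previous section.

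The main obstacle is establishing the bijection, and in particular controlling the effect of the $2$-cycle reduction. The delicate points are the black midpoints sitting on the interior diagonals: a short segment of a diagonal is the base of an upward small triangle on each of the two adjacent big triangles, so its midpoint has degree $2$ in $\Gamma$ and generates a $2$-cycle, whereas a midpoint on a genuine boundary edge has degree $1$. One must check that contracting these degree-$2$ vertices (Figure \ref{fig:replacing_dimer}) merges the adjacent regions so that exactly the $m-1$ interior lattice points of each diagonal survive as interior quiver vertices, together with the matching boundary condition coming from the modified treatment of $\Gamma$ at the boundary. An alternative, in the spirit of Proposition \ref{prop:quiver_fan_triangulation}, is to argue by induction on $n$: the base case $n=4$ is the $(m-1)\times(m-1)$ grid of internal vertices visible in the reduced GL$_5$-quadrilateral of Figure \ref{fig:quiver_gl_5_quadrilateral}, giving $(m-1)^2 = P_2(4,m-1)$, and the inductive step glues one triangle along the edge $(1,n)$. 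Gluing turns the $m-1$ interior points of that edge into interior points and introduces the $\binom{m-1}{2}$ interior points of the new triangle, so the number of internal vertices grows by $(m-1)+\binom{m-1}{2}=\binom{m}{2}=P_2(n+1,m-1)-P_2(n,m-1)$. The case $n=3$ is excluded precisely because then all three sides are genuine boundary edges, so no diagonal reduction occurs and the boundary condition alters the interior count.
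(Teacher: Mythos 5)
Your proposal is correct, and it takes a genuinely different route from the paper. The paper proves the formula by pure induction on the fan triangulation: for fixed $m$ it starts at $n=4$, where the $(m-1)^2$ internal vertices are read off the construction, and observes that each triangle added to the fan contributes $1+2+\cdots+(m-1)=\binom{m}{2}$ new internal vertices; it then runs a second induction on $m$ with fixed $n$ (increment $mn-2m-1$) --- in both cases the geometric increment is asserted ``by construction'' and the actual work is the $P_2$ arithmetic. You instead exhibit the internal quiver vertices as the interior lattice points of the $m$-subdivision and count in closed form, $(n-3)(m-1)+(n-2)\binom{m-1}{2}=\frac{(m-1)(m(n-2)-2)}{2}=P_2(n,m-1)$; your algebra checks out. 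This buys something the paper's induction does not: only the numbers $n-3$ of diagonals and $n-2$ of triangles enter, so triangulation-independence (the remark following the proposition) comes for free, whereas the paper's argument is tied to the fan shape. Conversely, the paper's route avoids your one real obligation, the region-to-lattice-point bijection --- but note that your worry about it is overstated: the reduction of Figure~\ref{fig:replacing_dimer} contracts the two edges at a degree-$2$ black vertex, merging two \emph{white} vertices, i.e.\ two faces of the quiver; by Euler's formula an edge contraction leaves the set of complementary regions, and hence the quiver vertex set, unchanged, so no regions merge and the bijection can be verified once and for all on the unreduced dimer --- nothing needs to ``survive'' the reduction. A second small correction: your explanation of the hypothesis $n>3$ is off, since the lattice count gives $\binom{m-1}{2}=P_2(3,m-1)$ for the triangle as well (one internal vertex for $m=3$, none for $m=2$, matching Figure~\ref{fig:quiver_triangle}), so the restriction reflects the paper's induction base rather than any breakdown of your count at $n=3$. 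Finally, your fallback induction (base $(m-1)^2$ at $n=4$, increment $(m-1)+\binom{m-1}{2}=\binom{m}{2}$ per glued triangle) is, up to bookkeeping, exactly the paper's first induction, so your write-up contains the paper's argument as a special case.
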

	\begin{proof}
		The proof is done by induction on $n$ and $m$. First let $m$ be fixed. For induction basis let $n=4$. The number of internal vertices equals $(m-1)^2$ by construction, which coincides with $P_2(4,m-1)$. Let the number of internal vertices $V_{n,m}$ of the GL$_m$-dimer of the fan triangulation of the $n$-gon be $P_2(n,m-1)$. Consider the $n+1$-gon, where we add a triangle to the fan. By construction of the GL$_m$-dimer, we get
		\begin{align*}
		1+2+\ldots +(m-2)+(m-1)
		\end{align*} 
		additional vertices by adding a triangle.
		By using the induction hypothesis,
		\begin{align*}
		V_{n+1,m}&= P_2(n,m-1) + 1+2+ \ldots + (m-2)+m-1=\\
		&= \frac{(m-1)^2 \cdot (n-2) + (m-1) \cdot (n-4)}{2} + \frac{(m-1)\cdot m}{2} =\\
		&= \frac{(m-1)^2 \cdot (n-2) + (m-1) \cdot (n-4)}{2} + \frac{(m-1)^2}{2} + \frac{m-1}{2} = \\
		&= \frac{(m-1)^2 (n-1) + (m-1)\cdot (n-3)}{2} = P_2(n+1,m-1),
		\end{align*}
		and we achieve the desired result. 
		
		Now let $n$ be fixed. For $m=2$, the number of inner vertices of the GL$_2$-dimer of an $n$-gon  is $(n-3)$ and coincides with $P_2(n,2)$. Again, let the number $V_{n,m}$ of internal vertices of the GL$_m$ dimer of the $n$-gon be $P_2(n,m-1)$. If we increase $m$ by one, the number of internal vertices increases by $m \cdot n - 2m -1$, 
		by construction of the GL$_m$-dimer. So the number $V_{n,m+1}$ of internal vertices of quiver of the GL$_{m+1}$-dimer is by using induction hypothesis
		\begin{align*}
		V_{n,m+1}=P_2(n,m-1)+ m\cdot n - 2m-1= \frac{m^2\cdot(n-2)+ m\cdot (n-4)}{2}= P_2(n,m)
		\end{align*}
		and the proof is done.
	\end{proof}
	\begin{rem}
		The number of internal vertices is the same for any triangulation of the $n$-gon.
	\end{rem}
	Before we can state the structure of the quiver $Q_F(m,n)$ of the GL$_m$-dimer of the fan triangulation of the $n$-gon, we have to introduce some notation.\\ 
	The boundary vertices are labelled by $1,\ldots,nm$ anticlockwise.\\
	The quiver $Q_F(m,n)$ contains $m-1$ disjoint nested oriented paths from $2+i$ to $nm-i$ for $i=0,\ldots,m-2$ formed by successive arrows. We will denote them by $\alpha_P$, where $P$ is the sink of the arrow. The internal vertices are labelled by a $3$-tuple $(a,b,c)$ depending on their position along these oriented paths as follows:
	\begin{itemize}
		\item[$a$]$\in [1,n-2]$  denotes the triangle, to which the internal vertex can be assigned to. 
		\item[$b$]$ \in [1,m-1]$ is one less than the starting vertex of the nested path.
		\item[$c$]$\in [1,b]$ counts the number of internal vertices up to $b$ in every triangle.    
	\end{itemize} 
	Figure \ref{fig:dimer_hexagon_gl4} shows the labelling of all internal vertices of $Q_F(4,6)$.
	The arrows are all indexed by their sinks. Along the boundary they are $x_k: k-1 \rightarrow k$. All the other arrows are as follows:
	\begin{notation}\label{def:arrows_general}
		\begin{align*}
		\alpha_{(1,i,1)}&:= i+1 \rightarrow (1,i,1) \ \ i\in [1,m-1] \\
		\alpha_{(a,b,1)}&:= (a-1,b,b) \rightarrow (a,b,1) \ \ a\in [2,n-2], b\in [1,m-2] \\
		\alpha_{(a,b,c)}&:= (a,b.c-1) \rightarrow (a,b,c) \ \ a\in [1,n-2], b\in [2,m-2], c \in [2,b] \\
		\alpha_{m \cdot n - i}&:= (n-2,i+1,i) \rightarrow m \cdot n -i \ \ i \in[1,m-2] \\
		\alpha_{m\cdot n} &:= (n-3,1,1) \rightarrow m\cdot n \\
		\beta_{m} &:= m+2 \rightarrow m \\
		\beta_{i} &:= (1,i,1) \rightarrow i \ \ i \in [2,m-1] \\
		\beta_{(k-1,m-1,i-2)}&:= k\cdot m + i \rightarrow (k-1,m-1,i-2) \ \ i\in [3,m] , k \in [2,n-1] \\
		\beta_{(k-2,m-1,m-1)}&:=k\cdot m + 1 \rightarrow (k-2,m-1,m-1) \\
		\beta_{(a,b,c)} &:= \left\{ \begin{array}{ll}
		(a,b+1,c+1) \rightarrow (a,b,c) & \mbox{$a\in [1,n-2] \  b \in [2,m-1] \ c\in [1,m-3]$} \\
		(a+1,b+1,1) \rightarrow (a,b,c) & \mbox{$a\in [1,n-2] \  b \in [2,m-1] \ c=b$} \end{array} \right. \\
		\gamma_{m \cdot (n-1)} &:= m \cdot (n-1)+2 \rightarrow m \cdot (n-1) \\
		\gamma_{(n-2,i+1,i)}   &:=m \cdot n - i + 1 \rightarrow (n-2,i+1,i) \ \ i\in [1,m-2] \\  
		\gamma_{m \cdot k +1 + i} &:= (k,m-1,i) \rightarrow m \cdot k +1 + i \ \ k \in[1,n-3] i \in [1,m-1] \\
		\gamma_{m \cdot (n-2) + 1 + i } &:= (n-2,m-1,i) \rightarrow m \cdot k +1 + i \ \  i \in [1,m-2] \\
		\gamma_{(a,b,c)} &:= (a,b-1,c) \rightarrow (a,b,c) \ \ a\in [1,n-2], \ b\in[2,m-1], \ c = b-1 
		\end{align*}
	\end{notation}
	In Figure \ref{fig:dimer_hexagon_gl4} the definition of the different types of arrows is shown in the case of the GL$_4$-dimer of a hexagon: Beside the black boundary arrows $x_k$ $k\in [1,m\cdot n]$, the remaining arrows can be identified as follows: The bold black arrows are named $\alpha_i$, the blue arrows $\beta_i$ and the green arrows $\gamma_i$, where the index $i$ always coincides with the labelling of the sink of the arrow. This means $i$ is either a triple $(a,b,c)$ or some natural number in $[1,m\cdot n]$ with $i \not \equiv 1 \mod m$. Note that additionally to the arrows and vertices of the quiver the diagonals of the original triangulation are drawn as dotted lines to emphasize the idea of labelling the internal vertices $(a,b,c)$. 
	\vspace{-1.3 cm}
	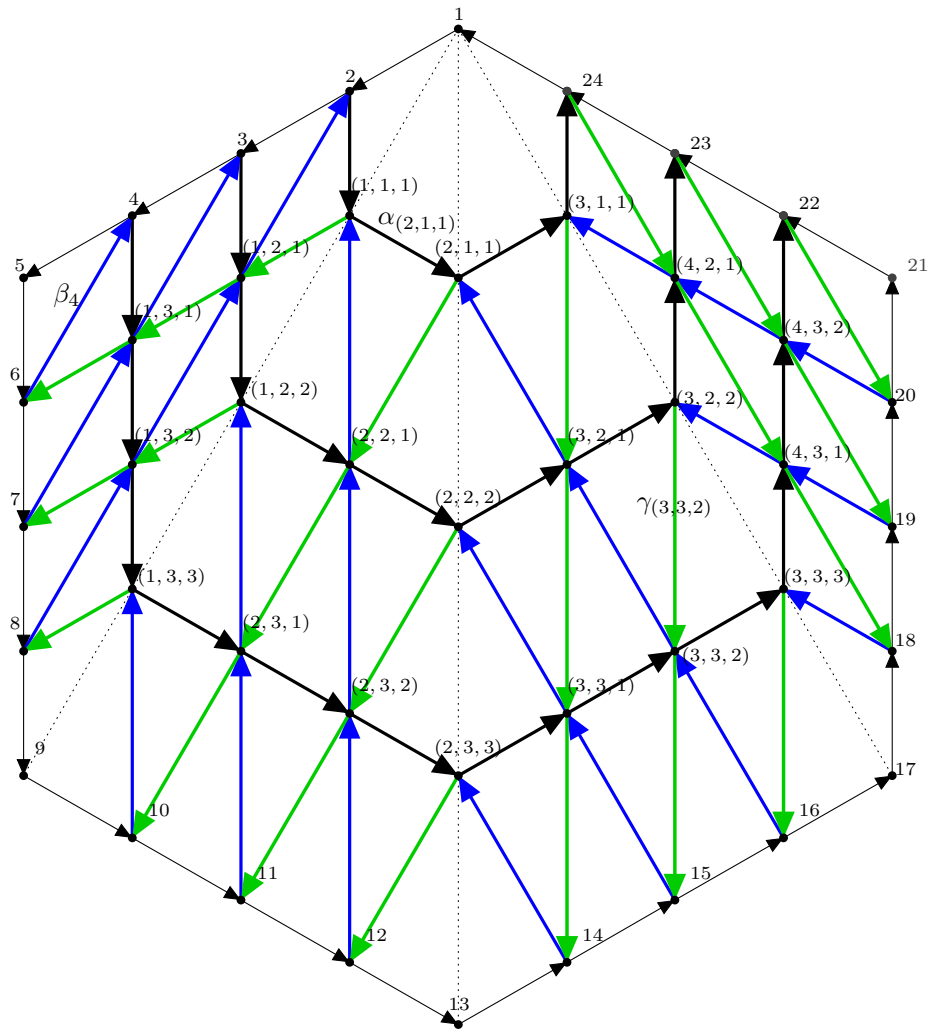
\begin{figure}[!htb]
		\begin{center}
			\definecolor{qqccqq}{rgb}{0,0.8,0}
			\definecolor{qqqqff}{rgb}{0,0,1}
			\definecolor{ffqqqq}{rgb}{1,0,0}
			\definecolor{uququq}{rgb}{0.25,0.25,0.25}
			\begin{tikzpicture}[line cap=round,line join=round,>=triangle 45,x=1.0cm,y=1.0cm, scale=1.1]
			\clip(-2,-6) rectangle (14.35,8.26);
			\draw [line width=0.4pt,dotted] (5.2,7)-- (0,-2);
			\draw [line width=0.4pt,dotted] (5.2,7)-- (5.2,-5);
			\draw [line width=0.4pt,dotted] (5.2,7)-- (10.39,-2);
			\draw [->] (5.2,7) -- (3.9,6.25);
			\draw [->] (3.9,6.25) -- (2.6,5.5);
			\draw [->] (2.6,5.5) -- (1.3,4.75);
			\draw [->] (1.3,4.75) -- (0,4);
			\draw [->] (0,4) -- (0,2.5);
			\draw [->] (0,2.5) -- (0,1);
			\draw [->] (0,1) -- (0,-0.5);
			\draw [->] (0,-0.5) -- (0,-2);
			\draw [->] (0,-2) -- (1.3,-2.75);
			\draw [->] (1.3,-2.75) -- (2.6,-3.5);
			\draw [->] (2.6,-3.5) -- (3.9,-4.25);
			\draw [->] (3.9,-4.25) -- (5.2,-5);
			\draw [->] (5.2,-5) -- (6.5,-4.25);
			\draw [->] (6.5,-4.25) -- (7.79,-3.5);
			\draw [->] (7.79,-3.5) -- (9.09,-2.75);
			\draw [->] (9.09,-2.75) -- (10.39,-2);
			\draw [->] (10.39,-2) -- (10.39,-0.5);
			\draw [->] (10.39,-0.5) -- (10.39,1);
			\draw [->] (10.39,1) -- (10.39,2.5);
			\draw [->] (10.39,2.5) -- (10.39,4);
			\draw [->] (10.39,4) -- (9.09,4.75);
			\draw [->] (9.09,4.75) -- (7.79,5.5);
			\draw [->] (7.79,5.5) -- (6.5,6.25);
			\draw [->] (6.5,6.25) -- (5.2,7);
			\draw [->,line width=1.2pt] (3.9,6.25) -- (3.9,4.75);
			\draw [->,line width=1.2pt] (3.9,4.75) -- node[label=above:$\alpha_{(2,1,1)}$,pos=0.625]{} (5.2,4);
			\draw [->,line width=1.2pt] (5.2,4) -- (6.5,4.75);
			\draw [->,line width=1.2pt] (6.5,4.75) -- (6.5,6.25);
			\draw [->,line width=1.2pt] (2.6,5.5) -- (2.6,4);
			\draw [->,line width=1.2pt] (2.6,4) -- (2.6,2.5);
			\draw [->,line width=1.2pt] (2.6,2.5) -- (3.9,1.75);
			\draw [->,line width=1.2pt] (3.9,1.75) -- (5.2,1);
			\draw [->,line width=1.2pt] (5.2,1) -- (6.5,1.75);
			\draw [->,line width=1.2pt] (6.5,1.75) -- (7.79,2.5);
			\draw [->,line width=1.2pt] (7.79,2.5) -- (7.79,4);
			\draw [->,line width=1.2pt] (7.79,4) -- (7.79,5.5);
			\draw [->,line width=1.2pt] (1.3,4.75) -- (1.3,3.25);
			\draw [->,line width=1.2pt] (1.3,3.25) -- (1.3,1.75);
			\draw [->,line width=1.2pt] (1.3,1.75) -- (1.3,0.25);
			\draw [->,line width=1.2pt] (1.3,0.25) -- (2.6,-0.5);
			\draw [->,line width=1.2pt] (2.6,-0.5) -- (3.9,-1.25);
			\draw [->,line width=1.2pt] (3.9,-1.25) -- (5.2,-2);
			\draw [->,line width=1.2pt] (5.2,-2) -- (6.5,-1.25);
			\draw [->,line width=1.2pt] (6.5,-1.25) -- (7.79,-0.5);
			\draw [->,line width=1.2pt] (7.79,-0.5) -- (9.09,0.25);
			\draw [->,line width=1.2pt] (9.09,0.25) -- (9.09,1.75);
			\draw [->,line width=1.2pt] (9.09,1.75) -- (9.09,3.25);
			\draw [->,line width=1.2pt] (9.09,3.25) -- (9.09,4.75);
			\draw [->,line width=1.2pt,color=qqqqff] (0,2.5) -- node[label=above:\textcolor{black}{\textbf{$\beta_4$}},pos=0.4]{} (1.3,4.75);
			\draw [->,line width=1.2pt,color=qqqqff] (0,1) -- (1.3,3.25);
			\draw [->,line width=1.2pt,color=qqqqff] (1.3,3.25) -- (2.6,5.5);
			\draw [->,line width=1.2pt,color=qqqqff] (0,-0.5) -- (1.3,1.75);
			\draw [->,line width=1.2pt,color=qqqqff] (1.3,1.75) -- (2.6,4);
			\draw [->,line width=1.2pt,color=qqqqff] (2.6,4) -- (3.9,6.25);
			\draw [->,line width=1.2pt,color=qqqqff] (3.9,-4.25) -- (3.9,-1.25);
			\draw [->,line width=1.2pt,color=qqqqff] (3.9,-1.25) -- (3.9,1.75);
			\draw [->,line width=1.2pt,color=qqqqff] (3.9,1.75) -- (3.9,4.75);
			\draw [->,line width=1.2pt,color=qqqqff] (2.6,-3.5) -- (2.6,-0.5);
			\draw [->,line width=1.2pt,color=qqqqff] (2.6,-0.5) -- (2.6,2.5);
			\draw [->,line width=1.2pt,color=qqqqff] (1.3,-2.75) -- (1.3,0.25);
			\draw [->,line width=1.2pt,color=qqqqff] (6.5,-4.25) -- (5.2,-2);
			\draw [->,line width=1.2pt,color=qqqqff] (7.79,-3.5) -- (6.5,-1.25);
			\draw [->,line width=1.2pt,color=qqqqff] (6.5,-1.25) -- (5.2,1);
			\draw [->,line width=1.2pt,color=qqqqff] (9.09,-2.75) -- (7.79,-0.5);
			\draw [->,line width=1.2pt,color=qqqqff] (7.79,-0.5) -- (6.5,1.75);
			\draw [->,line width=1.2pt,color=qqqqff] (6.5,1.75) -- (5.2,4);
			\draw [->,line width=1.2pt,color=qqqqff] (10.39,-0.5) -- (9.09,0.25);
			\draw [->,line width=1.2pt,color=qqqqff] (10.39,1) -- (9.09,1.75);
			\draw [->,line width=1.2pt,color=qqqqff] (9.09,1.75) -- (7.79,2.5);
			\draw [->,line width=1.2pt,color=qqqqff] (10.39,2.5) -- (9.09,3.25);
			\draw [->,line width=1.2pt,color=qqqqff] (9.09,3.25) -- (7.79,4);
			\draw [->,line width=1.2pt,color=qqqqff] (7.79,4) -- (6.5,4.75);
			\draw [->,line width=1.3pt,color=qqccqq] (9.09,4.75) -- (10.39,2.5);
			\draw [->,line width=1.3pt,color=qqccqq] (7.79,5.5) -- (9.09,3.25);
			\draw [->,line width=1.3pt,color=qqccqq] (9.09,3.25) -- (10.39,1);
			\draw [->,line width=1.3pt,color=qqccqq] (6.5,6.25) -- (7.79,4);
			\draw [->,line width=1.3pt,color=qqccqq] (7.79,4) -- (9.09,1.75);
			\draw [->,line width=1.3pt,color=qqccqq] (9.09,1.75) -- (10.39,-0.5);
			\draw [->,line width=1.3pt,color=qqccqq] (6.5,4.75) -- (6.5,1.75);
			\draw [->,line width=1.3pt,color=qqccqq] (6.5,1.75) -- (6.5,-1.25);
			\draw [->,line width=1.3pt,color=qqccqq] (6.5,-1.25) -- (6.5,-4.25);
			\draw [->,line width=1.3pt,color=qqccqq] (7.79,2.5) -- node[label=above:\textcolor{black}{\textbf{${\gamma_{(3,3,2)}}$}},pos=0.55]{} (7.79,-0.5);
			\draw [->,line width=1.3pt,color=qqccqq] (7.79,-0.5) -- (7.79,-3.5);
			\draw [->,line width=1.3pt,color=qqccqq] (9.09,0.25) -- (9.09,-2.75);
			\draw [->,line width=1.3pt,color=qqccqq] (5.2,4) -- (3.9,1.75);
			\draw [->,line width=1.3pt,color=qqccqq] (3.9,1.75) -- (2.6,-0.5);
			\draw [->,line width=1.3pt,color=qqccqq] (2.6,-0.5) -- (1.3,-2.75);
			\draw [->,line width=1.3pt,color=qqccqq] (5.2,1) -- (3.9,-1.25);
			\draw [->,line width=1.3pt,color=qqccqq] (3.9,-1.25) -- (2.6,-3.5);
			\draw [->,line width=1.3pt,color=qqccqq] (5.2,-2) -- (3.9,-4.25);
			\draw [->,line width=1.3pt,color=qqccqq] (3.9,4.75) -- (2.6,4);
			\draw [->,line width=1.3pt,color=qqccqq] (2.6,4) -- (1.3,3.25);
			\draw [->,line width=1.3pt,color=qqccqq] (1.3,3.25) -- (0,2.5);
			\draw [->,line width=1.3pt,color=qqccqq] (2.6,2.5) -- (1.3,1.75);
			\draw [->,line width=1.3pt,color=qqccqq] (1.3,1.75) -- (0,1);
			\draw [->,line width=1.3pt,color=qqccqq] (1.3,0.25) -- (0,-0.5);
			\begin{scriptsize}
			\fill [color=black] (0,4) circle (1.5pt);
			\draw[color=black] (0.2,4.15) node[label=left: $5$]{};
			\fill [color=black] (0,-2) circle (1.5pt);
			\draw[color=black] (0.2,-1.67) node {$9$};
			\fill [color=black] (5.2,-5) circle (1.5pt);
			\draw[color=black] (5.21,-4.75) node {$13$};
			\fill [color=black] (10.39,-2) circle (1.5pt);
			\draw[color=black] (10.55,-1.92) node {$17$};
			\fill [color=uququq] (10.39,4) circle (1.5pt);
			\draw[color=uququq] (10.7,4.15) node {$21$};
			\fill [color=black] (5.2,7) circle (1.5pt);
			\draw[color=black] (5.21,7.18) node {$1$};
			\fill [color=black] (2.6,5.5) circle (1.5pt);
			\draw[color=black] (2.62,5.67) node {$3$};
			\fill [color=black] (0,1) circle (1.5pt);
			\draw[color=black] (-0.1,1.34) node {$7$};
			\fill [color=black] (2.6,-3.5) circle (1.5pt);
			\draw[color=black] (2.92,-3.17) node {$11$};
			\fill [color=black] (7.79,-3.5) circle (1.5pt);
			\draw[color=black] (8.1,-3.17) node {$15$};
			\fill [color=black] (10.39,1) circle (1.5pt);
			\draw[color=black] (10.55,1.09) node {$19$};
			\fill [color=black] (7.79,5.5) circle (1.5pt);
			\draw[color=black] (8.1,5.62) node {$23$};
			\fill [color=black] (1.3,4.75) circle (1.5pt);
			\draw[color=black] (1.32,4.95) node {$4$};
			\fill [color=black] (3.9,6.25) circle (1.5pt);
			\draw[color=black] (3.91,6.43) node {$2$};
			\fill [color=black] (6.5,6.25) circle (1.5pt);
			\draw[color=black] (6.81,6.38) node {$24$};
			\fill [color=black] (9.09,4.75) circle (1.5pt);
			\draw[color=black] (9.4,4.9) node {$22$};
			\fill [color=black] (10.39,2.5) circle (1.5pt);
			\draw[color=black] (10.55,2.59) node {$20$};
			\fill [color=black] (10.39,-0.5) circle (1.5pt);
			\draw[color=black] (10.55,-0.36) node {$18$};
			\fill [color=black] (9.09,-2.75) circle (1.5pt);
			\draw[color=black] (9.4,-2.42) node {$16$};
			\fill [color=black] (6.5,-4.25) circle (1.5pt);
			\draw[color=black] (6.81,-3.92) node {$14$};
			\fill [color=black] (3.9,-4.25) circle (1.5pt);
			\draw[color=black] (4.22,-3.92) node {$12$};
			\fill [color=black] (1.3,-2.75) circle (1.5pt);
			\draw[color=black] (1.62,-2.42) node {$10$};
			\fill [color=black] (0,-0.5) circle (1.5pt);
			\draw[color=black] (-0.1,-0.16) node {$8$};
			\fill [color=black] (0,2.5) circle (1.5pt);
			\draw[color=black] (-0.1,2.84) node {$6$};
			\fill [color=black] (3.9,4.75) circle (1.5pt);
			\draw[color=black] (4.32,5.1) node {$(1,1,1)$};
			\fill [color=black] (5.2,4) circle (1.5pt);
			\draw[color=black] (5.32,4.35) node {$(2,1,1)$};
			\fill [color=black] (6.5,4.75) circle (1.5pt);
			\draw[color=black] (6.91,4.9) node {$(3,1,1)$};
			\fill [color=uququq] (6.5,6.25) circle (1.5pt);
			\fill [color=black] (2.6,2.5) circle (1.5pt);
			\draw[color=black] (3.12,2.64) node {$(1,2,2)$};
			\fill [color=black] (5.2,1) circle (1.5pt);
			\draw[color=black] (5.32,1.34) node {$(2,2,2)$};
			\fill [color=black] (7.79,2.5) circle (1.5pt);
			\draw[color=black] (8.21,2.54) node {$(3,2,2)$};
			\fill [color=uququq] (7.79,5.5) circle (1.5pt);
			\fill [color=black] (1.3,0.25) circle (1.5pt);
			\draw[color=black] (1.77,0.39) node {$(1,3,3)$};
			\fill [color=black] (5.2,-2) circle (1.5pt);
			\draw[color=black] (5.32,-1.67) node {$(2,3,3)$};
			\fill [color=black] (9.09,0.25) circle (1.5pt);
			\draw[color=black] (9.5,0.39) node {$(3,3,3)$};
			\fill [color=uququq] (9.09,4.75) circle (1.5pt);
			\fill [color=black] (2.6,4) circle (1.5pt);
			\draw[color=black] (3.02,4.35) node {$(1,2,1)$};
			\fill [color=black] (3.9,1.75) circle (1.5pt);
			\draw[color=black] (4.32,2.09) node {$(2,2,1)$};
			\fill [color=black] (6.5,1.75) circle (1.5pt);
			\draw[color=black] (6.91,2.09) node {$(3,2,1)$};
			\fill [color=black] (7.79,4) circle (1.5pt);
			\draw[color=black] (8.21,4.15) node {$(4,2,1)$};
			\fill [color=black] (1.3,3.25) circle (1.5pt);
			\draw[color=black] (1.73,3.59) node {$(1,3,1)$};
			\fill [color=black] (1.3,1.75) circle (1.5pt);
			\draw[color=black] (1.73,2.09) node {$(1,3,2)$};
			\fill [color=black] (2.6,-0.5) circle (1.5pt);
			\draw[color=black] (3.02,-0.16) node {$(2,3,1)$};
			\fill [color=black] (3.9,-1.25) circle (1.5pt);
			\draw[color=black] (4.32,-0.92) node {$(2,3,2)$};
			\fill [color=black] (6.5,-1.25) circle (1.5pt);
			\draw[color=black] (6.91,-0.92) node {$(3,3,1)$};
			\fill [color=black] (7.79,-0.5) circle (1.5pt);
			\draw[color=black] (8.28,-0.56) node {$(3,3,2)$};
			\fill [color=black] (9.09,1.75) circle (1.5pt);
			\draw[color=black] (9.5,1.89) node {$(4,3,1)$};
			\fill [color=black] (9.09,3.25) circle (1.5pt);
			\draw[color=black] (9.5,3.39) node {$(4,3,2)$};
			\end{scriptsize}
			\end{tikzpicture}
			\vspace{-1.7 cm}
			\caption{$Q_F(4,6)$: Quiver of the GL$_4$-dimer of the fan triangulation of a hexagon. For illustration, some arrows are labelled. \label{fig:dimer_hexagon_gl4}}
		\end{center}
	\end{figure} 
	\begin{prop}\label{prop:quiver_fan_triangulation_general}
		Let $Q_F(m,n)$ be the quiver of the fan triangulation of the $n$-gon, $n\geq 3$. Then $Q_F(m,n)$ has the following form: \\
		It consists of $m \cdot n$ vertices on the boundary, labelled anticlockwise by $1 , \ldots, m\cdot n$, and $P_2(n,m-1)$ internal vertices labelled $(a,b,c)$, with $a$,$b$ and $c$ as described above. \\
		Furthermore it has $m\cdot n +2$ arrows between the boundary vertices
		\begin{align*}
		x_k &: k-1 \rightarrow k  \\
		y_m &:= \beta_m \\
		y_{m\cdot (n-1)} &:=\gamma_{m\cdot (n-1)} 
		\end{align*}
		and the internal arrows as described in \ref{def:arrows_general}. 
	\end{prop}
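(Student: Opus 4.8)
The plan is to argue by induction on $n$ with $m\geq 2$ fixed but arbitrary, in direct analogy with the proof of Proposition~\ref{prop:quiver_fan_triangulation} for the case $m=2$. Since the number of internal vertices has already been shown to equal $P_2(n,m-1)$ by the preceding proposition, the content that remains is twofold: to attach to each internal vertex its three-index label $(a,b,c)$, and to verify that the arrows surviving the reduction of the GL$_m$-dimer are precisely the families of Notation~\ref{def:arrows_general}, with the indicated sources and sinks, together with the $m\cdot n$ boundary arrows $x_k$ and the two extra boundary arrows $y_m=\beta_m$ and $y_{m(n-1)}=\gamma_{m(n-1)}$, and that nothing else survives. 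Throughout, reduction means the repeated deletion of $2$-cycles of internal arrows via the potential relations, i.e. the local move of Figure~\ref{fig:replacing_dimer}.

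For the base case I would take $n=3$, the single triangle. Here $a$ is forced to equal $1$ and every arrow family of Notation~\ref{def:arrows_general} whose index range requires $a\geq 2$ (or the corresponding $k\geq 2$) is empty, so the surviving data is small; one must, however, keep track of the degenerate conventions at the apex, where indices such as $n-3$ and $n-2$ collapse. The reduced quiver is then read off the triangular grid directly (for $m=2$ this is Figure~\ref{fig:quiver_triangle}), and for general $m$ the $n=3$ specialisation of the claimed vertices and arrows is verified, most conveniently by a secondary induction on $m$ that adjoins one further line of the subdivision.

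For the inductive step I would use that the fan triangulation of the $(n+1)$-gon is obtained from that of the $n$-gon by adjoining a single triangle at the apex vertex $1$ along the diagonal $(1,n)$, which was a boundary edge of the $n$-gon but becomes internal in the $(n+1)$-gon. Thus the GL$_m$-dimer of the $(n+1)$-gon is that of the $n$-gon together with the block of the new triangle, and the reduction now proceeds also across this newly internal diagonal, again by the moves of Figure~\ref{fig:replacing_dimer}. I would then show that after reduction the adjoined block contributes exactly $1+2+\cdots+(m-1)=\binom{m}{2}$ new internal vertices --- the increment $P_2(n+1,m-1)-P_2(n,m-1)$ --- through which the $m-1$ nested oriented paths $\alpha_P$ are prolonged (the path at level $b$ acquiring $b$ new vertices), and that the gluing produces precisely the new $\alpha$-, $\beta$- and $\gamma$-arrows belonging to the adjoined block together with the $m$ new boundary arrows $x_k$, while every arrow lying strictly inside the old part is carried over by the induction hypothesis, up to the renumbering of the boundary.

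The main obstacle is the combinatorial bookkeeping of the labels. Unlike the case $m=2$, where each step appends a single internal vertex, here a whole triangular block of $\binom{m}{2}$ vertices is adjoined, and --- because the previously outermost (end) triangle of the $n$-gon is converted into an interior triangle of the $(n+1)$-gon, and because the $m$ vertices on the former edge $(1,n)$ pass from the boundary into the interior --- the labels $(a,b,c)$ must be reconciled with the scheme for the larger polygon rather than merely extended. Concretely, I must check that the sources and sinks of the connecting arrows $\alpha_{(a,b,1)}$, $\beta_{(k-1,m-1,\,\cdot\,)}$ and the relevant $\gamma$'s match the labels already assigned in the neighbouring triangle, and that the chain of $2$-cycle cancellations along the shared diagonal leaves no spurious arrow and omits none. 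The cleanest way to organize this is to follow the $m-1$ nested paths $\alpha_P$ one at a time and record how each is prolonged into the new triangle; once the extensions are fixed, the remaining $\beta$- and $\gamma$-arrows are forced by the dimer axiom that every internal arrow lie on exactly one positive and one negative face.
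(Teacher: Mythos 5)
Your proposal is correct and follows essentially the same route as the paper: the paper's proof of Proposition \ref{prop:quiver_fan_triangulation_general} consists of a single reference to the proof of Proposition \ref{prop:quiver_fan_triangulation}, i.e.\ precisely the induction on $n$ you describe, adjoining one triangle to the fan and reducing $2$-cycles via the move of Figure \ref{fig:replacing_dimer}, with the vertex count supplied by the preceding proposition. You simply make explicit the label bookkeeping and the $\binom{m}{2}$-vertex increment that the paper leaves implicit.
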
 
	\begin{proof}
		The proof is similar to the proof of \ref{prop:quiver_fan_triangulation}. 
	\end{proof}

	Equivalently to former section, we define some elements of the boundary algebra and show, that these elements are generating the algebra.
	
	\begin{defn}\label{def:generators_general}
		We define paths $z_{m\cdot j+k}$ and $y_{m\cdot j + k}$ as follows:
		\allowdisplaybreaks
		\begin{align}
		y_{k} &:= \left(\prod_{i=k}^{m-1}{\beta_{(1,m-1,m-k)}}\right) \cdot \beta_k ,  \ \  k \in [2,m] \\
		y_{m\cdot j + k} &:=         \left(\prod_{i=k}^{m-1}{\beta_{(j+1,m+1-k,m-k)}}\right) \cdot \beta_{(j,k-1,k-1)} \cdot 
		\left(\prod_{i=k}^{m-1}{\gamma_{(j,i,k-1)}}\right) \cdot \gamma_{(m\cdot j + k)}, \ \ j\in [1,n-3] \ \ k \in [2,m] \\
		y_{m\cdot (n-2) + k} &:= \left(\prod_{i=k}^{m-1}{\gamma_{(n-2,i,k-1)}}\right) \cdot \gamma_{(m\cdot(n-2)+k)}, \ \ k \in [2,m] \\
		y_{m\cdot n -k} &:= \left(\prod_{i=1}^{n-2}{ \prod_{j=0}^{k}{\alpha_{((i,k+1,j))}}}\right) \cdot \alpha_{m\cdot n - k}, \ \ k\in [0,m-2] \\
		z_{k} &:= \gamma_{(1,k,1)}  \beta_k, \ \ k\in [2,m-1] \\
		z_{m\cdot j + k} &:= \beta_{(j,m-1,k-1)} \gamma_{m\cdot j + k}, \ \ k\in [2,m-1] \\
		z_{m\cdot n - k} &:= \gamma_{n-2,k+1,k}  \beta_{m\cdot n -k}, \ \ k\in [2,m-1].
		\end{align}
	\end{defn}
	
	\begin{rem}
		The products above might be empty, which happens in equations $(9)-(11)$, when $k=m$. In these cases, only a single arrow remains. These are exactly the arrows
		\begin{align*}
		y_{m}&:= \beta_{m} \\
		y_{m\cdot j + m} &:= \beta_{(j,k-1,k-1)} \cdot \gamma_{m\cdot j + m} \\
		y_{m\cdot (n-2) + m} &:= \gamma_{m\cdot(n-2)+m}. 
		\end{align*}
	\end{rem}	
	
	Note that the notation of the arrows $z_i$ is different to the previous section, because of the new type of generators. \\
	
	We will now define a quiver and show, that the boundary algebra of $Q_F(m,n)$ is isomorphic to it up to some relations.
	\begin{defn}\label{def:quiver_general_case}
		The quiver $\Gamma(m,n)$ is defined by $m\cdot n$ vertices labelled anticlockwise and $3 \cdot n \cdot (m-1)$ arrows
		\begin{align*}
		&x_k:  k-1 \rightarrow k, \ \ \ \ \ \ \ \ k \in [1,m\cdot n]\\
		&y_k:  k+2-2i \rightarrow k,  \ \  k \equiv -i \mod m, \ k\not \equiv 1 \mod m, \ \text{ and } -i \in [0,m-2] \\
		&z_k:  k+1 \rightarrow k, \ \ \ \ \ \ \ \ k \not \equiv 1,0 \mod m.
		\end{align*} 
	\end{defn} 
	Figure \ref{fig:quiver_general} shows the quiver $\Gamma(4,6)$.
	\begin{figure}[!htb]
		\begin{center}
			\includegraphics[scale=0.5]{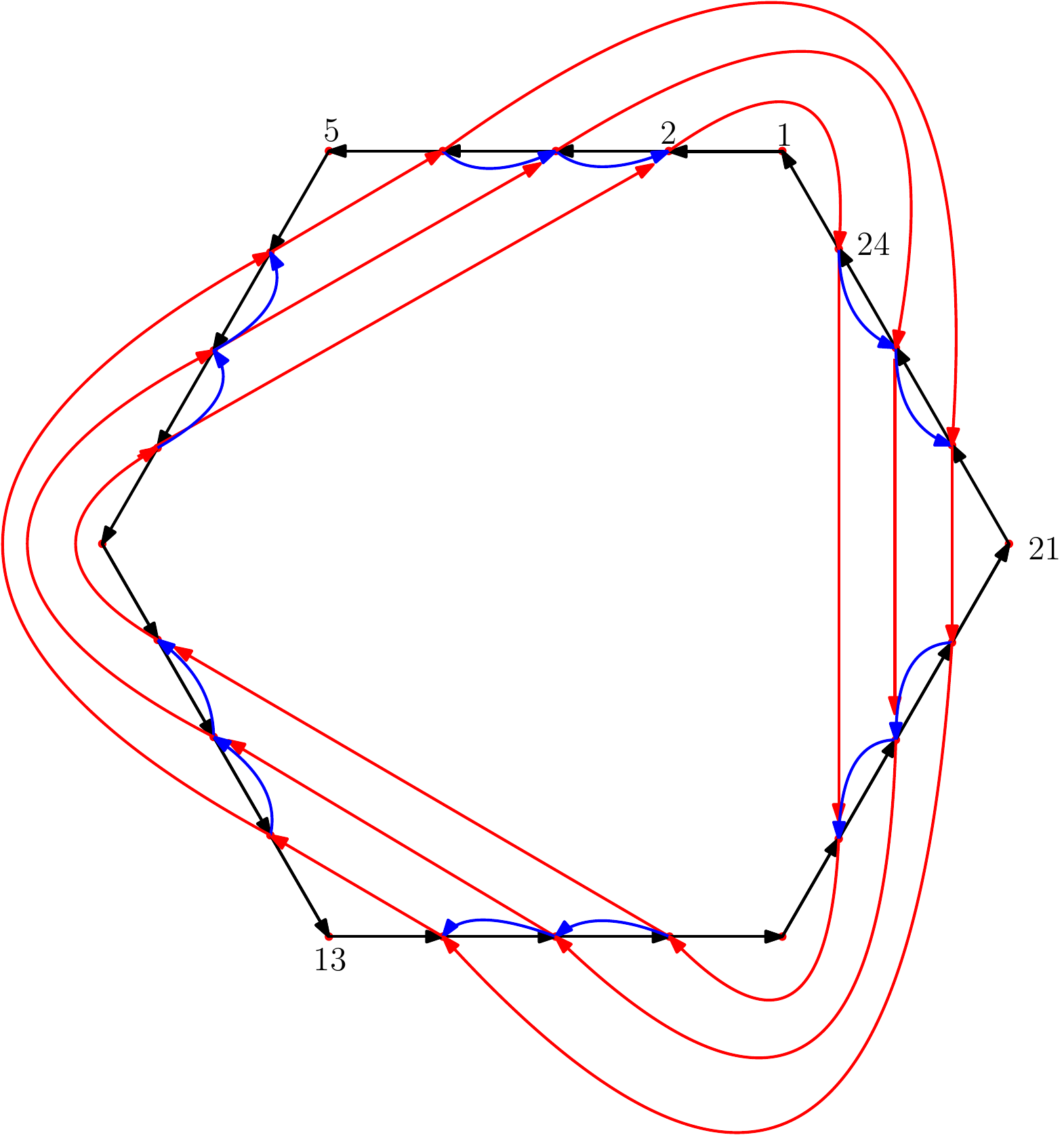}
			\vspace{-0.3 cm}
			\caption{$\Gamma(4,6)$ as in Definition \ref{def:quiver_general_case}. Arrows $x$ are black, arrows $y$ are red, arrows $z$ are blue.\label{fig:quiver_general}}
		\end{center}
	\end{figure} 
	\begin{rem}
		The indices of the tail $l$ and head $k=j\cdot m - k'$ with $k'\in [0,m-2]$ of $y_k$ fulfil
		\begin{align*}
		l+k &= 2 \cdot(j\cdot m +1) \\
		\Leftrightarrow l &= m\cdot j + 2 + k' = k + 2 +2k',
		\end{align*} 
		which is equivalent to the description of $y_k$ in Definition \ref{def:quiver_general_case}.
	\end{rem}
	
	As in the previous section we first describe the boundary algebra of the dimer algebra $\Lambda_{Q_F(m,n)}$, where $Q_F(m,n)$ is the quiver of the GL$_m$-dimer of the fan triangulation of the $n$-gon.
	\begin{prop}[Boundary algebra of the fan triangulation]\label{prop:quiver_fan_general}
		The boundary algebra $\mathcal{B}_{Q_F(m,n)}$ is isomorphic to $\Gamma(m,n)$
		satisfying the following relations obtained by the natural potential $W$:
		\begin{eqnarray*}
			x_{k+2-2i}y_{k} & = &  y_{k+1}z_{k}, \ k \not \equiv 0,1 \mod m, \ k \equiv -i \mod m, \text{ and } -i \in [1,m-2] \\
			x_{k+1}z_{k} &= & z_{k-1}x_{k}, \ k \not \equiv 0,1,2 \mod m \\
			x_{k+1} z_{k} &= & y_{k-2} x_{k-1}x_{k}, \ k \equiv 2 \mod m \\
			x_{k+1} x_{k+2} y_{k} &= & z_{k-1}x_{k}, \ k \equiv 0 \mod m \\
			y_{k+2-2j}y_{k} & = & x_{k+2m+1}x_{k+2m+2} \cdots x_{k} , k \equiv -j \mod m, \ k\not \equiv 1 \mod m, \ \text{ and } -j \in [0,m-2]
		\end{eqnarray*}
		where $k\in [1,m\cdot n]$  and indices always considered modulo $m\cdot n$. 
	\end{prop}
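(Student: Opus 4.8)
The plan is to follow the same two-part strategy used for $m=2$: first establish that the elements $x_k$, $y_k$, $z_k$ from Definition \ref{def:generators_general} generate $\mathcal{B}_{Q_F(m,n)}$, and then verify that they satisfy exactly the listed relations, so that the assignment sending each arrow of $\Gamma(m,n)$ to the correspondingly named element of Definition \ref{def:generators_general} induces an isomorphism from $\mathbb{C}\Gamma(m,n)$ modulo these relations onto the boundary algebra. For generation I would prove the direct analog of Lemma \ref{lemma:generator_fan}: every cycle-free path between two boundary vertices factors through one of the designated generators. This is a factorization-by-minimality argument. Fix the source and target and a path $\delta$ not passing through the chosen generator; after absorbing any chordless cycles via Remark \ref{prop:equivalent_cycle}, one locates the first internal arrow along $\delta$ deviating from the canonical route through the nested paths $\alpha_P$ of Proposition \ref{prop:quiver_fan_triangulation_general}, and applies the natural potential relation $\overset{\gamma}{\cong}$ (or $\overset{\beta}{\cong}$) at the adjacent internal vertex to push $\delta$ one step closer to the generator, contradicting minimality. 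Iterating over the triples $(a,b,c)$ labelling the internal vertices shows the $x$, $y$, $z$ exhaust all boundary paths.

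For the relations I would split them into two groups. The four relations whose two sides are both chordless cycles at a common boundary vertex---namely $x_{k+2-2i}y_k = y_{k+1}z_k$, $x_{k+1}z_k = z_{k-1}x_k$, $x_{k+1}z_k = y_{k-2}x_{k-1}x_k$, and $x_{k+1}x_{k+2}y_k = z_{k-1}x_k$---follow immediately from Remark \ref{prop:equivalent_cycle}, once one checks that the indicated compositions are indeed the two chordless cycles at the respective vertex, which is a matter of reading off the arrows from Notation \ref{def:arrows_general}. The remaining family $y_{k+2-2j}y_k = x_{k+2m+1}\cdots x_k$ is the analog of relation (I.) for $m=2$ and is proven by the explicit telescoping computation modelled on equation (\ref{calculation:product_x}): starting from the boundary product $x_{k+2m+1}\cdots x_k$, one repeatedly applies the natural potential at the internal $\gamma$-arrows and then the $\alpha$-arrows, folding the long boundary walk inward until it coincides with the composite $y_{k+2-2j}y_k$ of the two $y$-type generators travelling through the interior.

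Finally, to upgrade from a surjection to an isomorphism I would verify that the listed relations are complete, for instance by exhibiting a common basis: the reduced paths of $\Gamma(m,n)$ modulo these relations should be in bijection with the basis of $\mathcal{B}_{Q_F(m,n)}$ given by distinct cycle-free boundary paths in $Q_F(m,n)$. The main obstacle is the combinatorial bookkeeping. For $m=2$ there were only two generator types and the factorizations were short; here the three families of arrows carry intricate dependencies on the triples $(a,b,c)$, so verifying that the minimality argument terminates correctly in every index regime, and that the telescoping for the long relations chains through the right sequence of internal arrows, demands careful case analysis---particularly at the boundary residues $k \equiv 0, 1, 2 \pmod m$ and near the ends of the nested paths, where several of the defining products in Definition \ref{def:generators_general} degenerate to a single arrow.
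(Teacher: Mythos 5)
Your overall strategy coincides with the paper's: its own proof of this proposition is a single sentence combining Remark \ref{prop:equivalent_cycle}, Lemma \ref{lemma:generator_fan} and Proposition \ref{prop:relation} ``applied to the general case,'' and your three ingredients --- the factorization-by-minimality argument for generation, the chordless-cycle remark for the short relations, and the telescoping computation modelled on (\ref{calculation:product_x}) for the long relations --- are exactly those; your closing completeness/basis check even goes beyond what the paper records. However, there is one concrete misstep in your sorting of the relations. The first family, $x_{k+2-2i}y_{k} = y_{k+1}z_{k}$, is \emph{not} a chordless-cycle identity: by Definition \ref{def:quiver_general_case} both sides are parallel paths from the vertex $k+1-2i$ to the vertex $k$, not closed cycles. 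For instance, in $Q_F(4,6)$ with $k=7$ (so $-i=1$) the two sides are $x_{11}y_{7}$ and $y_{8}z_{7}$, both running from $10$ to $7$. Consequently your proposed verification for this family --- ``check that the indicated compositions are indeed the two chordless cycles at the respective vertex'' --- would fail, and Remark \ref{prop:equivalent_cycle} cannot be invoked. This family has to be handled like the long relations instead: substitute the composite paths of Definition \ref{def:generators_general} for $y_k$, $y_{k+1}$ and $z_k$, and chain the $\partial W$-relations at the internal arrows of Notation \ref{def:arrows_general} (the same $\overset{\gamma}{\cong}$, $\overset{\beta}{\cong}$ moves as in (\ref{calculation:product_x})) until the two sides agree.

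The slip is understandable and points at a real subtlety: this family is vacuous when $m=2$ (it requires $-i\in[1,m-2]$), so it has no model among the Section~3 relations (I.) and (II.) on which your classification was calibrated --- it is genuinely new for $m\geq 3$, and the paper's one-line proof glosses over it as well. The remaining three short families are classified correctly by you: for $x_{k+1}z_k = z_{k-1}x_k$ both sides are $2$-cycles at $k$; for $x_{k+1}z_k = y_{k-2}x_{k-1}x_k$ (with $k\equiv 2$, so $y_{k-2}$ has tail $k$) and for $x_{k+1}x_{k+2}y_k = z_{k-1}x_k$ (with $k\equiv 0$, so $y_k$ has tail $k+2$) both sides are chordless cycles at $k$, and Remark \ref{prop:equivalent_cycle} applies exactly as it did for relation (II.) in Proposition \ref{prop:relation}. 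With the first family rerouted through the potential-relation chain, the rest of your plan is sound and matches the paper's intent.
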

	\begin{proof}
		The proof is done by combining Remark \ref{prop:equivalent_cycle}, Lemma \ref{lemma:generator_fan} and Proposition \ref{prop:relation} applied to the general case.
	\end{proof}
	
	The flip-invariance in case of $m>2$ is a direct consequence of Theorem \ref{th:flip}.
	\begin{theorem}[Flip equivalence]\label{th:flip_equiv_general}
		Let $Q_F(m,n)$ be the quiver of the GL$_m$-dimer of the fan triangulation of the $n$-gon, let $Q'$ be the quiver of the GL$_m$-dimer of an arbitrary triangulation of the $n$-gon, with $\Lambda_{Q_F(m,n)}$ and $\Lambda_{Q'}$ the corresponding dimer algebras and $e_b$ respectively $e_{b'}$ the sum of the boundary idempotents for $Q_F(m,n)$ and for $Q'$ respectively.
		Then there is an isomorphism
		\begin{align*}
		e_b \Lambda_{Q_F(m,n)} e_b \cong e_{b'} \Lambda_{Q'} e_{b'}.
		\end{align*}
	\end{theorem}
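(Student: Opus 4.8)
The plan is to mirror the proof of Theorem \ref{th:flip} verbatim in structure, replacing its GL$_2$ local analysis by the GL$_m$ counterpart. As recalled there, every triangulation of the $n$-gon is reachable from the fan triangulation by a finite sequence of diagonal flips (Theorem (a) in \cite{h}), so I would argue by induction on the number $t$ of flips. Writing $Q' = \mu_t \cdots \mu_1 Q_F(m,n)$, the inductive hypothesis gives $\mathcal{B}(\mu_{t-1}\cdots\mu_1 Q_F(m,n)) \cong \mathcal{B}(Q_F(m,n))$, and it remains only to show that a single flip $\mu_t$ preserves the boundary algebra. Thus the entire statement reduces to a GL$_m$-analogue of Lemma \ref{lm:flip_equivalence}, and the base case and induction step are handled by the same local argument.

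For the single-flip step the decisive fact, as in the $m=2$ case, is that a diagonal flip is a \emph{local} operation: it only alters the subdivided quadrilateral around the flipped diagonal and leaves the remainder of the GL$_m$-dimer untouched, in particular all boundary vertices and all boundary arrows. Hence $x_k' := x_k$ for $k \in [1,m\cdot n]$ is forced, exactly as before. The only generators of Definition \ref{def:generators_general} that can change are those $y_k$ and $z_k$ whose defining paths traverse the flipped quadrilateral. For each of these I would write down an image $y_k'$ respectively $z_k'$ in terms of the new internal arrows created by the flip, choosing representatives that avoid the $2$-cycles removed by $\partial W$, in direct parallel to the paths $z_{2j}',\,z_{2j\pm 2}',\dots$ constructed in the proof of Lemma \ref{lm:flip_equivalence}.

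The remaining work is to verify that the relations of Proposition \ref{prop:quiver_fan_general} survive this substitution. The cyclic relations $x_{k+1}z_k = z_{k-1}x_k$, together with their variants $x_{k+1}z_k = y_{k-2}x_{k-1}x_k$ at $k\equiv 2 \bmod m$ and $x_{k+1}x_{k+2}y_k = z_{k-1}x_k$ at $k\equiv 0 \bmod m$, hold automatically by Remark \ref{prop:equivalent_cycle}, since in each case both sides are short cycles $u_k$ at the relevant boundary vertex and flips do not affect this. The substantive identities are the mixed relations $x_{k+2-2i}y_k = y_{k+1}z_k$ and the long relations $y_{k+2-2j}y_k = x_{k+2m+1}x_{k+2m+2}\cdots x_k$ for the generators meeting the quadrilateral. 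These I would check one at a time by rewriting the left-hand side through the new arrows using the $\partial W$-relations attached to the internal arrows of the flipped region (a chain of reductions of the form $\overset{\gamma}{\cong}$, as for $m=2$), until the expression uses no arrow of the quadrilateral, and then invoking the already-established relations of Proposition \ref{prop:quiver_fan_general} on the reduced path.

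I expect the main obstacle to be combinatorial bookkeeping rather than a new idea: for general $m$ the flipped quadrilateral carries the full GL$_m$ subdivision, so considerably more generators of types $y$ and $z$ pass through it and each must be re-expressed and re-checked. The reason this is nonetheless a \emph{direct} consequence of Theorem \ref{th:flip} is that, strip by strip along the $m-1$ nested paths, the local rearrangement produced by the flip is a copy of the GL$_2$ flip analyzed in Lemma \ref{lm:flip_equivalence}; the genuinely new relations are the long $y$-relations that couple different strips, and confirming these cross-strip equalities is where I would concentrate the care.
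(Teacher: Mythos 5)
Your proposal takes essentially the same route as the paper: the paper's proof of Theorem \ref{th:flip_equiv_general} is precisely the instruction to combine Proposition \ref{prop:quiver_fan_general} with the proof of Theorem \ref{th:flip} (whose single-flip step is Lemma \ref{lm:flip_equivalence}), i.e.\ induction on the number of flips together with the local analysis of one flipped quadrilateral, which is exactly what you reconstruct. If anything, your sketch supplies more of the details (the redefined generators $y_k'$, $z_k'$, the automatic short-cycle relations via Remark \ref{prop:equivalent_cycle}, and the $\partial W$-rewriting of the mixed and long relations) than the paper's one-line proof makes explicit.
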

	\begin{proof}
		Combine the structure of the fan triangulation shown in Proposition \ref{prop:quiver_fan_general} with the proof of Theorem \ref{th:flip} for the general case.
	\end{proof}
	\begin{cor}\label{cor:central_element_general}
		Consider the boundary algebra $\mathcal{B}_Q$ of a dimer model $Q$ of a  GL$_m$-dimer of an arbitrary triangulation of the $n$-gon.
		Then the element $t$,
		\begin{align*}
		t := \sum_{i=1}^{m\cdot n}{u_i},
		\end{align*}
		is a central element of this algebra.
	\end{cor}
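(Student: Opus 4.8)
The plan is to reuse the mechanism behind Corollary~\ref{cor:central_element}: $t$ is a sum of one chordless cycle $u_i$ (Definition~\ref{def:short_cycle}) at each boundary vertex $i\in[1,m\cdot n]$, and centrality reduces to a local cycle-sliding identity. By Proposition~\ref{prop:quiver_fan_general} together with Theorem~\ref{th:flip_equiv_general}, $\mathcal{B}_Q$ is generated by the arrows $x_k,y_k,z_k$ of $\Gamma(m,n)$, so it suffices to check that $t$ commutes with each such generator. Writing compositions from left to right and using that the $e_i$ are orthogonal idempotents with $u_k=e_ku_ke_k$, a generator $g=e_ige_j$ running from $i$ to $j$ satisfies
\begin{align*}
tg=\sum_{k=1}^{m\cdot n}u_kg=u_ig,\qquad gt=\sum_{k=1}^{m\cdot n}gu_k=gu_j,
\end{align*}
since every summand with $k\neq i$ (resp. $k\neq j$) vanishes. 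Hence centrality of $t$ is equivalent to the family of identities $u_ig=gu_j$ for all generators $g:i\to j$.

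The key step is the single-arrow version: for every arrow $a:v\to w$ of the reduced quiver $Q$ one has $u_va=au_w$ in $\Lambda_Q$. I would prove this by choosing the face through $a$: read from $v$, its bounding cycle is a chordless cycle $a\rho$ with $\rho:w\to v$, and by Remark~\ref{prop:equivalent_cycle} every chordless cycle at $v$ agrees with $a\rho$ modulo $\partial W$, so $u_v=a\rho$; similarly $\rho a$ is a chordless cycle at $w$, so $u_w=\rho a$. Therefore
\begin{align*}
u_va=a\rho a=au_w.
\end{align*}
With this in hand, I factor an arbitrary generator as a path $g=a_1a_2\cdots a_r$ in $Q$, with $a_\ell:v_{\ell-1}\to v_\ell$, $v_0=i$ and $v_r=j$, and slide the cycle along $g$ telescopically:
\begin{align*}
u_i\,a_1a_2\cdots a_r=a_1\,u_{v_1}\,a_2\cdots a_r=\cdots=a_1a_2\cdots a_r\,u_j=gu_j.
\end{align*}
The intermediate short cycles $u_{v_1},\ldots,u_{v_{r-1}}$ at the (possibly internal) vertices serve only as bookkeeping and cancel, which is precisely why only the boundary contributions to $t$ are needed.

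The main obstacle is justifying $u_va=au_w$ uniformly, in particular at boundary vertices lying on a single face, where Remark~\ref{prop:equivalent_cycle} (which assumes at least two incoming and two outgoing arrows) does not directly apply. At such a vertex, however, the incidence graph is a single edge, so $v$ has exactly one incoming and one outgoing arrow, both belonging to its unique bounding cycle; thus $a$ automatically lies in $u_v$ and the factorization $u_v=a\rho$ persists, making these low-valence cases immediate. Once the single-arrow identity is secured at every vertex, the telescoping above is purely formal and completes the proof, exactly paralleling the $m=2$ computation of Corollary~\ref{cor:central_element}.
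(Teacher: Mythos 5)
Your proposal is correct and follows essentially the same route as the paper, whose entire proof is the one-line observation that $t$ is a sum of exactly one chordless cycle per boundary vertex and hence commutes with everything; your sliding identity $u_v a = a u_w$ and the telescoping along paths are precisely the mechanism that one-liner takes for granted. You in fact supply more detail than the paper does (reduction to generators, the shared-face argument for sliding, and the low-valence boundary vertices where Remark~\ref{prop:equivalent_cycle} does not literally apply), all of which is sound.
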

	\begin{proof}
		The element $t$ is the sum of exactly one chordless cycle for every boundary vertex and hence commutes with every element of $\mathcal{B}_Q$.
	\end{proof}
	
	Hence, as a consequence of Proposition \ref{prop:quiver_fan_general} and Theorem \ref{th:flip_equiv_general}, we can state the main result for the general case:
	
	\begin{theorem}[Main Theorem]
		Let $\mathcal{B}_Q$ be the boundary algebra obtained from the dimer model $Q$ of a  GL$_m$-dimer of an arbitrary triangulation of the $n$-gon.
		Then the quiver of $\mathcal{B}_Q$ with relations $\partial W$ is isomorphic to $\Gamma(m,n)$ subject to the following relations, for $k\in[1,m\cdot n]$ and indices are considered modulo $m\cdot n$:
		\begin{eqnarray*}
			x_{k+2-2i}y_{k} & = &  y_{k+1}z_{k}, \ k \not \equiv 0,1 \mod m, \ k \equiv -i \mod m, \text{ and } -i \in [1,m-2] \\
			x_{k+1}z_{k} &= & z_{k-1}x_{k}, \ k \not \equiv 0,1,2 \mod m \\
			x_{k+1} z_{k} &= & y_{k-2} x_{k-1}x_{k}, \ k \equiv 2 \mod m \\
			x_{k+1} x_{k+2} y_{k} &= & z_{k-1}x_{k}, \ k \equiv 0 \mod m \\
			y_{k+2-2j}y_{k} & = & x_{k+2m+1}x_{k+2m+2} \cdots x_{k} , \  k \equiv -j \mod m, \ k\not \equiv 1 \mod m, \ \text{ and } -j \in [0,m-2].
		\end{eqnarray*}
		Furthermore the element
		\begin{align*}
		t := \sum_{i=1}^{m\cdot n}{u_{i}}
		\end{align*}
		is central in $\mathcal{B}_Q$.
	\end{theorem}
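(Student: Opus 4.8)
The plan is to assemble the theorem directly from the three results established immediately above, treating the quiver-with-relations statement and the centrality of $t$ separately. For the quiver-with-relations part, the first step is to invoke Theorem \ref{th:flip_equiv_general} in order to reduce to a single triangulation: for the quiver $Q$ of the GL$_m$-dimer of an arbitrary triangulation of the $n$-gon there is an isomorphism $e_{b'}\Lambda_Q e_{b'}\cong e_b\Lambda_{Q_F(m,n)}e_b$, so it suffices to describe the boundary algebra of the fan triangulation. That description is precisely the content of Proposition \ref{prop:quiver_fan_general}, which identifies $\mathcal{B}_{Q_F(m,n)}$ with the quiver $\Gamma(m,n)$ subject to the five families of relations reproduced in the statement. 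Composing the two isomorphisms then yields the claimed presentation of $\mathcal{B}_Q$ for arbitrary $Q$.

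The step that must be spelled out, rather than merely quoted, is that the isomorphism of Theorem \ref{th:flip_equiv_general} is compatible with the chosen presentation and not just an abstract algebra isomorphism. Here I would follow the pattern of the $m=2$ case: Theorem \ref{th:flip} is proved through Lemma \ref{lm:flip_equivalence}, whose accompanying corollary exhibits the map explicitly as $x_k\mapsto x_k'$ and $z_{2k}\mapsto z_{2k}'$ on generators. For general $m$ the same local flip argument produces a generator-to-generator correspondence $x_k\mapsto x_k'$, $y_k\mapsto y_k'$, $z_k\mapsto z_k'$ for the generators fixed in Definition \ref{def:generators_general}. Since the five relation families of Proposition \ref{prop:quiver_fan_general} are all consequences of the natural potential $W$ (via Remark \ref{prop:equivalent_cycle} for the relations coming from chordless cycles, and via the factorisation arguments underlying Lemma \ref{lemma:generator_fan} for the remaining ones), they are transported verbatim along a single flip, and hence along any finite composition of flips.

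For the centrality assertion I would simply appeal to Corollary \ref{cor:central_element_general}: the element $t=\sum_{i=1}^{mn}u_i$ is the sum of exactly one chordless cycle $u_i$ at each boundary vertex $i$, and by Remark \ref{prop:equivalent_cycle} any two chordless cycles at a vertex agree modulo $\partial W$, so for any boundary arrow $a\colon i\to j$ one has $t\,a = u_i\,a = a\,u_j = a\,t$, whence $t$ commutes with every generator and therefore with all of $\mathcal{B}_Q$. The hard part will not be the formal combination, which is immediate, but ensuring that the flip isomorphism genuinely matches the labelled generators $x_k,y_k,z_k$ and the five relation families of Proposition \ref{prop:quiver_fan_general}; once the explicit generator correspondence inherited from the $m=2$ argument is in hand, every relation is preserved and the theorem follows.
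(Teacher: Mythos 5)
Your proposal is correct and follows essentially the same route as the paper, which likewise assembles the result from Theorem \ref{th:flip_equiv_general}, Proposition \ref{prop:quiver_fan_general}, Lemma \ref{lm:flip_equivalence} and Corollary \ref{cor:central_element_general}. Your added discussion of the explicit generator correspondence $x_k\mapsto x_k'$, $y_k\mapsto y_k'$, $z_k\mapsto z_k'$ under flips spells out a compatibility the paper leaves implicit (via the corollary to Lemma \ref{lm:flip_equivalence}), but it is the same argument, not a different one.
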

	\begin{proof}
		Proposition \ref{prop:quiver_fan_general}, Lemma \ref{lm:flip_equivalence} and Theorem \ref{th:flip_equiv_general}, together with Corollary \ref{cor:central_element_general} give the desired result.
	\end{proof}
	
	
	\subsection*{Acknowledgements} This work was supported by the Austrian Science Fund (FWF) under Grant W1230, Doctoral Program ‘Discrete Mathematics’. He also wants to thank his supervisor Karin Baur for her great advice and support and an anonymous referee for carefully reading the manuscript and providing many helpful suggestions.

\end{document}